\newtheorem{theorem}{Theorem}[section]
\newtheorem{lemma}[theorem]{Lemma}
\newtheorem{addendum}[theorem]{Addendum}
\newtheorem{cor}[theorem]{Corollary}
\theoremstyle{definition}
\newtheorem{definition}[theorem]{Definition}
\newtheorem{remark}[theorem]{Remark}
\newtheorem{example}[theorem]{Example}
\newtheorem*{emb}{Casson's Embedding Theorem}
\title{Exotic smoothings via large $\mathbb{R}^4$'s in Stein surfaces}
\author{Julia Bennett}
\begin{document}

\maketitle

\begin{abstract}
We study the relationship between exotic $\mathbb{R}^4$'s and Stein surfaces as it applies to smoothing theory on more general open $4$-manifolds. In particular, we construct the first known examples of large exotic $\mathbb{R}^4$'s that embed in Stein surfaces. This relies on an extension of Casson's Embedding Theorem for locating Casson handles in closed $4$-manifolds. Under sufficiently nice conditions, we show that using these $\mathbb{R}^4$'s as end-summands produces uncountably many diffeomorphism types while maintaining independent control over the genus-rank function and the Taylor invariant. 
\end{abstract}

\section{Introduction}

There are relatively few techniques available for studying smoothing theory on open $4$-manifolds. It was shown by Quinn in \cite{Q} that every open $4$-manifold admits at least one smooth structure. While there are many examples that actually admit uncountably many diffeomorphism classes of smooth structures, it is still unknown if this can be expected in general. In particular, it is still conceivable that some open $4$-manifold is uniquely smoothable up to diffeomorphism. The goal of this paper is to extend existing technology for understanding exotic open $4$-manifolds by exploring the relationship between exotic $\mathbb{R}^4$'s and Stein surfaces. We construct an uncountable family of exotic $\mathbb{R}^4$'s with previously unknown properties, and we exploit these to gain a new level of control over existing invariants of exotic smoothings on open $4$-manifolds. 

Historically, methods for finding infinite families of exotic open $4$-manifolds relied heavily on information about the behavior of smoothings on $\mathbb{R}^4$. In \cite{G1}, Gompf defined the first infinite collection of exotic $\mathbb{R}^4$'s,  smooth $4$-manifolds that are homeomorphic but not diffeomorphic to $\mathbb{R}^4$. Uncountably many exotic $\mathbb{R}^4$'s were later produced by Taubes in \cite{Tau} by extending Donaldson theory and then applying the results in conjunction with Freedman's breakthroughs from \cite{F}. This was the first $4$-manifold to exhibit such uncountable behavior. Related families of smooth structures were subsequently defined on more general open $4$-manifolds by fixing some standard smooth structure and then attaching infinitely many different $\mathbb{R}^4$'s using an operation introduced by Gompf in \cite{G0} called \textit{end-summing}. Under sufficiently nice conditions, this  resulted in infinitely many diffeomorphism classes (sometimes countable and sometimes uncountable) on the same underlying $4$-manifold. For example, see the work of Bi\v{z}aca and Etnyre in \cite{BE}, Taylor's generalization in \cite{Tay}, results from Gompf in \cite{G1b}, and additional generalizations by Ding in \cite{D}, Fang in \cite{Fa}, and Gompf and Stipsicz in Section 9.3 of \cite{GS}. The hypotheses required for this procedure to succeed depended greatly on what was known about the $\mathbb{R}^4$'s being attached. In most cases, the resulting diffeomorphism types were distinguishable either by the \textit{Taylor invariant} from \cite{Tay} (taking values in $\mathbb{Z}^{\geq 0}\cup\{\pm\infty\}$) or a generalization of \textit{compact equivalence classes} from \cite{G1}, which  are two invariants that measure the complexity of embedded exotic $\mathbb{R}^4$'s in a given smooth $4$-manifold. However, little information beyond these invariants was known about the exotic open $4$-manifolds that were produced. Additionally, it seems likely that the methods for detecting exoticness were too coarse.  At this stage, however, it was not clear what additional structure could be expected from exotic $\mathbb{R}^4$'s that would be helpful for obtaining a better understanding of this construction.  

More recently, Gompf exploited the rich structure associated to \textit{Stein surfaces} to study open $4$-manifolds in \cite{G5}. A Stein surface is an open, complex $4$-manfiold that admits a proper, biholomorphic embedding into some ambient $\mathbb{C}^N$. These have associated \textit{adjunction inequalities} that place restrictions on an invariant called the \textit{genus-rank function}. This invariant was introduced in \cite{G5}, measuring the genera of smoothly embedded, homologically essential surfaces in a given smooth $4$-manifold. In \cite{G5}, exotic smooth structures on many handlebody interiors were defined by replacing standard $2$-handles with  \textit{Casson handles}. After embedding into Stein surfaces and applying the associated adjunction inequalities, the resulting diffeomorphism types were distinguished by their {genus-rank functions}. Despite the significant advancements in the study of exotic $4$-manifolds that have been facilitated by Stein surfaces, these techniques had not previously been considered in the case of open $4$-manifolds. The developments from \cite{G5} demonstrated the importance of Stein surfaces to our current setting. 

This paper constructs exotic $\mathbb{R}^4$'s while maintaining control over their relationship with Stein surfaces. Our primary interest will be in \textit{large} $\mathbb{R}^4$'s, i.e. those that contain smooth, compact,\linebreak codimension-$0$ submanifolds that do not  embed in the standard $\mathbb{R}^4$. For context, we note that both the Taylor invariant and compact equivalence class measure the size of $\mathbb{R}^4$'s. In particular, all $\mathbb{R}^4$'s with either non-zero Taylor invariant or a different compact equivalence class than the standard $\mathbb{R}^4$ are large.  There are various constructions of large $\mathbb{R}^4$'s appearing in the literature. For example, Gompf defines infinitely many large $\mathbb{R}^4$'s in \cite{G1} using topologically slice links and later constructs doubly-indexed uncountable families of large $\mathbb{R}^4$'s in both \cite{G1} and \cite{G1b} by analyzing compact equivalence classes, Bi\v{z}aca and Etnyre in \cite{BE}  and Taylor in \cite{Tay} find $\mathbb{R}^4$'s with arbitrarily large values of the Taylor invariant and sometimes uncountably many compact equivalence classes, and Freedman and Taylor produce a \textit{universal $\mathbb{R}^4$} in \cite{FT} containing all other $\mathbb{R}^4$'s as end-summands. However, the connection between large $\mathbb{R}^4$'s and Stein surface has never before been investigated. There do exist exotic $\mathbb{R}^4$'s that are contained in Stein surface, but all previously known examples are necessarily small. (Most small $\mathbb{R}^4$'s actually embed in $\mathbb{C}^2$, and Gompf produces small $\mathbb{R}^4$'s that admit Stein structures in both \cite{G2} and \cite{G4}.) 

We define the first collection of large $\mathbb{R}^4$'s that each come equipped with an embedding into a Stein surface, chosen so that they still carry the information necessary for manipulating both the Taylor invariant and compact equivalence classes.
\begin{theorem}
 There exist uncountably many large $\mathbb{R}^4$'s that each admit a smooth embedding into a Stein surface. Furthermore, these realize arbitrarily large (finite) values of the Taylor invariant by uncountably many compact equivalence classes. 
\end{theorem}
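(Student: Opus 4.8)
The plan is to realize \emph{all} of the required $\mathbb{R}^4$'s inside a single Stein surface, obtained by replacing one $2$-handle of a compact Stein domain with a Casson handle that is simultaneously located inside a closed $4$-manifold carrying a gauge-theoretic obstruction, and then carving out an uncountable, Taylor-stratified family of exotic $\mathbb{R}^4$'s from the resulting Stein surface. Concretely, I would first fix a compact Stein domain $\overline{S}$ with $\operatorname{int}(\overline{S})$ diffeomorphic to the standard $\mathbb{R}^4$, presented by a Legendrian handlebody diagram in which a distinguished $2$-handle is attached along a Legendrian knot $K$ with framing $\operatorname{tb}(K)-1$ and geometrically cancels a $1$-handle; by Eliashberg's criterion $\operatorname{int}(\overline{S})$ is then a Stein surface. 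The handlebody and the knot $K$ are to be chosen so that the extension of Casson's Embedding Theorem locates a Casson handle attached along $K$ inside a closed, simply connected smooth $4$-manifold $X$ with nontrivial Donaldson invariants.

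Replacing the distinguished $2$-handle of $\overline{S}$ by this Casson handle $CH$ produces an open $4$-manifold $R$. Since $CH$ is homeomorphic to a standard open $2$-handle (Freedman), it still cancels the $1$-handle \emph{topologically}, so $R$ is homeomorphic to $\mathbb{R}^4$, while the constructions of \cite{G5} realizing Casson handles subordinate to a plurisubharmonic function are adapted to ensure that $R$ admits a smooth embedding into a Stein surface. I would then check that $R$ is exotic and, more importantly, \emph{large}: truncating $CH$ at a finite stage gives a compact, codimension-$0$ submanifold $\Delta\subset R$ — a finite kinky-handle tower together with the $0$- and $1$-handles — and for $\Delta$ taken deep enough, the positioning of $CH$ inside $X$ furnished by the extended embedding theorem forces $\Delta$ to inherit the obstruction of $X$, so that $\Delta$ cannot embed in the standard $\mathbb{R}^4$ (an embedding $\Delta\hookrightarrow\mathbb{R}^4\subset S^4$ would, after excising and regluing inside $X$, yield a closed smooth $4$-manifold violating Donaldson's diagonalization theorem). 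The Taylor invariant of $R$ is bounded below by a quantity read off from the tower of $CH$ and the Donaldson data of $X$; by iterating — enlarging the $1$-skeleton and taking connected sums of the model manifolds $X$ — one realizes each prescribed large finite value $n$ of the Taylor invariant, keeping the Stein embedding at every stage.

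Finally, to obtain uncountably many compact equivalence classes for each such $n$, inside a large Stein-embeddable $\mathbb{R}^4$ of Taylor invariant $n$ built as above I would produce a nested exhaustion $\Delta_1\subset\Delta_2\subset\cdots$ by compact codimension-$0$ submanifolds whose complexities grow in a controlled fashion — each increment governed by the number of kinks introduced at the corresponding level of the tower — following the nested-sequence schemes of \cite{BE} and \cite{Tay}. For a subset $T\subseteq\mathbb{N}$ one forms the sub-$\mathbb{R}^4$ $R_T$ generated by $\{\Delta_i : i\in T\}$; subsets with a common finite supremum but with different patterns of accumulation of complexity near the end yield pairwise non-compactly-equivalent $R_T$'s by the germ-at-infinity invariants of \cite{G1b}, while the common supremum pins the Taylor invariant to $n$. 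Each $R_T$ inherits the smooth embedding into a Stein surface from the ambient construction — or, where it is more convenient to separate compact equivalence classes by end-summing members of an uncountable family of small exotic $\mathbb{R}^4$'s, from the fact that a boundary connected sum of compact Stein domains is again a compact Stein domain, so the relevant end-sums still lie in Stein surfaces.

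The step I expect to be the main obstacle is the one in the second paragraph: carrying out the $2$-handle replacement so that a single Casson handle is wild enough to carry a nontrivial gauge-theoretic obstruction on its finite truncations — hence make $R$ large with prescribed Taylor invariant — yet controlled enough to be subordinate to a plurisubharmonic exhaustion of a Stein surface. Pseudoconvexity pushes the handle structure toward tameness, which, treated naively, forces the $\mathbb{R}^4$ to be small, whereas largeness and a positive Taylor invariant demand a Donaldson-type obstruction surviving on compact pieces; reconciling these two requirements by locating a Casson handle inside both a Stein surface and a closed $4$-manifold with nontrivial invariants is exactly the work done by the extension of Casson's Embedding Theorem.
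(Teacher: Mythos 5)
Your construction has the roles of the two sides of the cut-and-paste reversed, and this is fatal to the Taylor-invariant and largeness claims. You take $R$ to be the interior of a handlebody with indices $\leq 2$ in which one $2$-handle is replaced by a Casson handle. Since every Casson handle is a union of kinky handles, $R$ admits a handle decomposition with no handles of index $3$ or $4$; but Remark 4.5 of \cite{Tay} (used repeatedly in the paper) says that any $\mathbb{R}^4$ with non-vanishing Taylor invariant has infinitely many $3$-handles in \emph{every} handle decomposition. So $\gamma(R)=0$ no matter how wildly the Casson handle sits inside a closed manifold with nontrivial gauge theory, and your plan to realize arbitrarily large finite Taylor invariants this way cannot work. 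The largeness step collapses for the same structural reason: every Casson handle embeds smoothly in the standard open $2$-handle preserving attaching regions, so each finite truncation $\Delta$ (a finite tower together with the $0$- and $1$-handles) embeds back into $\overline{S}$ with its standard structure, hence into the standard $\mathbb{R}^4$; the Donaldson-diagonalization obstruction you want on $\Delta$ does not exist. (A large $\mathbb{R}^4$ of this handle type would in fact settle an open question the paper points out.) Your final paragraph correctly senses this tension, but the extension of Casson's Embedding Theorem does not resolve it in your direction: its role is not to make a Casson-handle smoothing large.

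What the paper does instead is take the \emph{complement} side. By Lemma \ref{mainlemma}, a characteristic immersed sphere with only negative double points and vanishing Arf invariant in $X=\overline{\mathbb{C}P^2}\#16s\,\mathbb{C}P^2$ yields an embedded $B^4\cup CH$ in $X$ with controlled first stage; $CH$ is then re-embedded in the standard $2$-handle of $\overline{\mathbb{C}P^2}$, and $L$ is defined as $\overline{\mathbb{C}P^2}$ minus a collar region $i(C)$, so $L$ is an open subset of $\overline{\mathbb{C}P^2}$, not a Casson handlebody. Largeness and the Taylor bounds come from the compact piece $K\subseteq L$ whose end agrees with that of a manifold $Z$ with even, positive definite, unimodular form (property (3) of Theorem \ref{inStein}), to which Furuta's 10/8 inequality gives $\gamma(\natural_n L)>2ns$, with finiteness from doubling the Stein domain; the Stein embedding comes from the negativity of the double points of the first-stage disk, which makes $\overline{\mathbb{C}P^2}$ minus the first-stage tower a compact Stein domain by the Thurston--Bennequin criterion, with $L$ a shaved $\mathbb{R}^4$ inside it. Finally, the uncountably many compact equivalence classes at a fixed Taylor value are produced by the radial family $L_t\subseteq L$ and Taubes's end-periodic Donaldson theory applied to that definite end data, not by choosing sub-$\mathbb{R}^4$'s generated by pieces of an exhaustion: compact pieces of your $R$ all embed in the standard $\mathbb{R}^4$, so no ``germ-at-infinity'' bookkeeping of them can separate compact equivalence classes or pin a nonzero Taylor invariant.
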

\noindent  It turns out that the diffeomorphism type of each Stein surface only depends on the Taylor invariant of the corresponding $\mathbb{R}^4$. So we've produced an uncountable family of large $\mathbb{R}^4$'s inside a countable family of Stein surfaces. It follows from  Remark 4.5 of \cite{Tay} that these $\mathbb{R}^4$'s fail to admit a handle decomposition without infinitely many $3$-handles, while Stein surfaces always admit a handle decomposition with no $3$- or $4$-handles. In particular, these $\mathbb{R}^4$'s do not admit Stein structures themselves. This theorem is an immediate consequence of Theorem \ref{inStein} and Corollary \ref{notStein}, which provide much more detail about the structure of these $\mathbb{R}^4$'s. Corollary \ref{notU} obtains a similar result in the case of infinite Taylor invariant. 

These properties should not be expected generically from large $\mathbb{R}^4$'s. To emphasize this, we produce another family that behaves similarly on the level of the Taylor invariant but has a very different relationship with Stein surfaces. 
\begin{theorem} 
There exist exotic $\mathbb{R}^4$'s that realize arbitrarily large (finite) values of the Taylor invariant and each contain a smooth, compact, codimension-$0$ submanifold that does not smoothly embed into any Stein surface.
\end{theorem}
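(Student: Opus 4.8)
The plan is to arrange the large value of the Taylor invariant and the non-embeddability of a compact piece into Stein surfaces as two \emph{independent} features of the construction, and then to splice them together by end-summing. I would begin with the exotic $\mathbb{R}^4$'s of Bi\v{z}aca--Etnyre \cite{BE} and Taylor \cite{Tay}, which realize arbitrarily large finite values of the Taylor invariant $\gamma$, fixing for each such value $n$ a compact exhaustion that witnesses $\gamma = n$. Because a compact submanifold of $\mathbb{R}^4$ that witnesses $\gamma \geq n$ continues to do so after end-summing with any smoothing of $\mathbb{R}^4$, it suffices to produce a single exotic $\mathbb{R}^4$, call it $\mathcal{Q}$, that contains a smooth compact codimension-$0$ submanifold $C$ admitting no smooth embedding into any Stein surface: the end-sum of $\mathcal{Q}$ with the $n$-th manifold above is then homeomorphic to $\mathbb{R}^4$, has $\gamma \geq n$, and still contains $C$.

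I would build $\mathcal{Q}$ by a Casson-handle construction running parallel to the one behind Theorem \ref{inStein}, but with the attaching data of the Casson handle chosen so as to \emph{obstruct} rather than permit a Stein embedding of the compact pieces. Concretely, I would attach the Casson handle along a framed link so that a compact sub-handlebody $C \subset \mathcal{Q}$ --- built from $B^4$, the flat $2$-handles, and enough stages of a Casson tower to apply Casson's Embedding Theorem --- carries an attaching circle with framing $f$ that is positive and large compared to the genus $g$ of an evident spanning surface for it inside $C$. Topologically the Casson handle renders the construction standard, so $\mathcal{Q}$ is homeomorphic to $\mathbb{R}^4$ and, by the same reasoning as in \cite{G1}, is exotic; the one new ingredient is keeping careful track of $f$ and $g$.

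To see that $C$ embeds in no Stein surface, suppose $\iota \colon C \hookrightarrow W$ with $W$ Stein. Capping off a Stein sublevel set that contains $\iota(C)$, by the theorem of Lisca and Mati\'c one places $\iota(C)$ inside a closed minimal K\"ahler surface $X$ of general type with $b_2^+(X) > 1$. Applying the extension of Casson's Embedding Theorem to the image of the Casson tower inside $X$ yields a Casson handle in $X$, hence a flat spanning disk for the attaching circle; together with the core of the $2$-handle and a disk in $B^4$ this assembles into a closed surface $\Sigma \subset X$ of genus $g$ with $[\Sigma]\cdot[\Sigma] = f > 0$. Since its self-intersection is nonzero, $[\Sigma]$ is non-torsion in $H_2(X;\mathbb{Z})$, so the Seiberg--Witten adjunction inequality applies and forces $2g - 2 \geq f$, contradicting the choice of $f$. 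The delicate point --- and the main obstacle --- is this last step: one must ensure that the surface produced from $C$ is genuinely \emph{smoothly} embedded in $X$, since the disk coming out of a Casson handle is a priori only locally flat, and that no choice of embedding $\iota$ or of reembedding of the Casson data can degrade the homological essentialness of $\Sigma$, which here is built in through the positive framing. Coordinating $f$ against $g$ at the start so that the very same $C$ both records this obstruction and survives intact as a submanifold of the final end-sum is the remaining bookkeeping.
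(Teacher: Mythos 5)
Your reduction in the first paragraph (splice a fixed ``bad'' compact piece into $\mathbb{R}^4$'s of large Taylor invariant by end-summing) is reasonable in spirit, but the core of your construction of $\mathcal{Q}$ does not work, and the difficulty you flag as ``bookkeeping'' is in fact fatal. A Casson handle (or any finite Casson tower, via Freedman) only supplies a \emph{locally flat topological} disk bounded by the attaching circle; it never supplies a smooth one --- if it did, Casson handles would be smoothly standard and there would be no exotic $\mathbb{R}^4$'s to build in the first place. Consequently the closed surface $\Sigma\subset X$ you assemble inside the Lisca--Mati\'c K\"ahler cap is only topologically embedded, the Seiberg--Witten adjunction inequality does not apply to it, and no contradiction with $2g-2\ge f$ is obtained. (There is also no reason the ``extension of Casson's Embedding Theorem'' can be run inside an arbitrary minimal surface of general type containing $\iota(C)$; its hypotheses --- simply connected complement, dual classes or spin conditions --- are not available there.) Note also that if $C$ genuinely contained a \emph{smooth} homologically essential surface of genus $g$ and square $f>2g-2$, you would not need the Casson tower at all; the whole scheme of encoding the obstruction in the framing of a circle that is only capped off topologically cannot be repaired. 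A secondary gap: the theorem asks for arbitrarily large \emph{finite} values of $\gamma$, and you only establish the lower bound $\gamma(\mathcal{Q}\natural R_n)\ge n$; without exhibiting an embedding of the end-sum (or at least of every compact ball in it) into a closed spin $4$-manifold with hyperbolic intersection form, you have not excluded $\gamma=\infty$.

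The paper's mechanism is entirely different and avoids the smooth-versus-flat issue. Lemma \ref{smalllemma} takes a topologically slice knot $K$ with a Legendrian representative of Thurston--Bennequin number $1$ (e.g.\ a positive untwisted Whitehead double) and forms the compact handlebody $Y_K=B^4\cup(\text{0-framed 2-handles along }K\text{ and }\overline{K})$. The key smooth surface is genuinely smooth from the start: the smooth slice disk of $K\#\overline{K}$ gives a smoothly embedded sphere of square $0$ in $Y_K$ representing the sum of the generators. Homological essentialness in a putative Stein target $S$ is then forced not by framings but by end-summing $S$ with the Stein interior of $X_K$ and ambiently boundary-summing to rebuild $Y_K$ so that this sphere is essential in the resulting Stein surface, contradicting the standard fact that Stein surfaces contain no essential smooth spheres of square $\ge -1$. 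Finally, rather than end-summing with Bi\v{z}aca--Etnyre/Taylor examples, the paper embeds $Y_K$ in the universal $\mathbb{R}^4$ $U$ and exhausts $U$ by $\mathbb{R}^4$'s $E_i$ (interiors of topological balls containing the bad compact piece); each $\gamma(E_i)<\infty$ because $E_i$ embeds in the double of a compact piece of $U$, while $\gamma(E_i)\to\gamma(U)=\infty$, which is exactly how the finite but arbitrarily large values are certified.
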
 
\noindent As we've stated it, this theorem is a slightly weakened version of Theorem \ref{notinStein}. Its proof relies on Lemma \ref{smalllemma}, which provides an explicit description of compact handlebodies that fail to embed into any Stein surface (even when we allow the embedding to be trivial on the level of second homology). 

We begin illustrating the usefulness of these new $\mathbb{R}^4$'s by analyzing the behavior of the genus-rank function under the end-sum operation. We observe that the adjunction inequality associated to Stein surfaces can be preserved while end-summing with certain large $\mathbb{R}^4$'s, even though they do not admit Stein structures themselves. 
\begin{theorem} 
If a smooth $4$-manifold is obtained by end-summing some Stein surface with an $\mathbb{R}^4$ defined by Theorem 1.1, then it still satisfies the adjunction inequality associated to this Stein surface.  
\end{theorem}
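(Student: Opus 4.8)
The plan is to deduce the statement from the ordinary adjunction inequality for Stein \emph{domains} by a ``Stein engulfing'' argument: every compact surface in the end-sum can be trapped inside an honest Stein domain in which it carries essentially the same homology class. Write $X = S\,\natural\,R$ and fix a Stein structure on $S$, with canonical class $c_1(S)\in H^2(S)$. By Theorem \ref{inStein} the exotic $\mathbb R^4$ $R$ admits a smooth embedding into a Stein surface $S_R$. Since $R$ is homeomorphic to $\mathbb R^4$ we have $\widetilde H_*(R)=0$, so end-summing changes nothing in positive degrees: $H_2(X)\cong H_2(S)$ and $c_1(S)$ is canonically an element of $H^2(X)$. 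In these terms the adjunction inequality associated to $S$ asserts that every homologically essential closed surface $\Sigma\subset X$ with class $\alpha\in H_2(S)\setminus\{0\}$ satisfies $2g(\Sigma)-2\ge \alpha\cdot\alpha+\lvert\langle c_1(S),\alpha\rangle\rvert$, together with the evident refinement for disjoint families of surfaces, which is exactly the constraint on the genus-rank function appearing in \cite{G5}.

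So fix such a $\Sigma$ (or a disjoint family). By compactness $\Sigma$ lies in a subregion of the form $S_t\cup n\cup K$, where $S_t\subset S$ is a regular sublevel set of the exhausting plurisubharmonic function---hence a Stein domain---the set $K\subset R$ is a compact codimension-$0$ submanifold, and $n$ is the (compact, contractible) neck of the end-sum; write this region as the boundary connected sum $S_t\,\natural\,K$. Since the neck is contractible, $H_2(S_t\,\natural\,K)\cong H_2(S_t)\oplus H_2(K)$; write the class of $\Sigma$ as $(a,k)$, where $a$ maps to $\alpha$ under $H_2(S_t)\to H_2(S)$. Because $H_2(R)=0$ and homology commutes with direct limits over compact subsets, we may enlarge $K$ to a compact $K'\subset R$ with $K\subset\operatorname{int}K'$ for which $H_2(K)\to H_2(K')$ vanishes; inside $S_t\,\natural\,K'$ the class of $\Sigma$ is then $(a,0)$. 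Finally choose a sublevel set $D$ of $S_R$ (again a Stein domain) with $K'\subset\operatorname{int}D$, so that, with the boundary sums nested compatibly, $\Sigma$ lies in $Z:=S_t\,\natural\,D$ with class $(a,0)\in H_2(S_t)\oplus H_2(D)\cong H_2(Z)$.

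Now $Z$ is the boundary connected sum of two Stein domains, hence itself a Stein domain (attaching a subcritical $1$-handle to a Stein domain preserves the Stein condition; see, e.g., \cite{G2} or \cite{GS}), and its canonical class restricts to $c_1(S_t)=c_1(S)\vert_{S_t}$ on the first summand. Applying the adjunction inequality for Stein domains to $\Sigma\subset Z$, and using that both the canonical class and the intersection form split along $H_2(S_t)\oplus H_2(D)$ while the $D$-component of $[\Sigma]$ is zero, every contribution from the $D$-summand drops out and we obtain
\[
2g(\Sigma)-2\;\ge\;a\cdot a+\lvert\langle c_1(S_t),a\rangle\rvert\;=\;\alpha\cdot\alpha+\lvert\langle c_1(S),\alpha\rangle\rvert ,
\]
the last equality being compatibility of intersection forms and of $c_1$ under $H_2(S_t)\to H_2(S)$. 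Since the same reasoning applies to every embedded surface and every disjoint family, the genus-rank function of $X$ satisfies the adjunction inequality associated to $S$.

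The two steps that make the statement nontrivial are also the two requiring care. The passage from the compact piece $K\subset R$ to a Stein domain $D$ is where the special nature of the $\mathbb R^4$'s of Theorem~1.1 enters: a general large $\mathbb R^4$ contains compact submanifolds that embed in no Stein surface at all (cf.\ Lemma \ref{smalllemma}), and there the argument simply breaks. And the homological bookkeeping---arranging that the $S_R$-component of $[\Sigma]$ dies, so that only $c_1(S)$ rather than the canonical class of the larger domain $Z$ survives in the final inequality---is what ties the conclusion to the adjunction inequality of $S$ itself; establishing that splitting together with its compatibility with the intersection form is the main technical point.
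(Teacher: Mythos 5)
Your overall strategy (trap a compact surface in the end-sum inside a compact Stein domain, kill the $H_2$-contribution coming from the $\mathbb{R}^4$ side using $H_2(R)=0$, and let the splitting of $c_1$ and the intersection form reduce everything to the adjunction inequality for $S$) is viable and is a compact, piece-by-piece variant of what the paper does globally in Lemma \ref{almoststein}, where the entire end-sum is embedded into the Stein surface obtained by end-summing $S$ with the Stein surface $S_n$ of Theorem \ref{inStein}(2). However, there is a genuine gap at the sentence ``with the boundary sums nested compatibly, $\Sigma$ lies in $Z:=S_t\natural D$.'' That nesting is exactly the nontrivial point, and nothing in your argument justifies it. The neck of the end-sum leaves the compact piece $K'$ through the \emph{end of $R$}, but the end of $R$ is not the end of $S_R$: by Theorem \ref{inStein}(2) the image of $\natural_n L$ in its Stein surface is the interior of a flat topological $4$-ball, hence precompact. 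So after transporting $K'$ into a sublevel Stein domain $D\subseteq S_R$, you still must extend your embedding over the neck, i.e.\ you need an embedded thickened arc in $D\setminus\operatorname{int}(K')$ joining the ball on $\partial K'$ where the end-sum tube exits to $\partial D$ (where the Stein $1$-handle of $S_t\natural D$ attaches). For an arbitrary smooth embedding of $R$ into a Stein surface this accessibility can fail -- the relevant component of the complement of $K'$ in $S_R$ could be precompact and never reach $\partial D$ -- so ``$R$ admits a smooth embedding into a Stein surface'' is not by itself enough to run your bookkeeping, and your closing paragraph, which asserts that only Stein-embeddability of compact pieces is used, identifies the wrong special property.

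What rescues the construction is precisely the \emph{shaved} condition in Theorem \ref{inStein}(2), which you never invoke: flatness of the ball $B$ with $\operatorname{int}B=\natural_n L$ gives a bicollar of $\partial B$, the unbounded component of $R\setminus K'$ accumulates on $\partial B$, and through the collar one escapes into the connected, noncompact set $S_R\setminus\operatorname{int}B$ and hence to $\partial D$ for a suitable sublevel $D$; only then does $\Sigma$ really sit inside a Stein domain $S_t\natural D$ with class $(a,0)$, and your computation with $c_1(Z)\vert_{S_t}=c_1(S)\vert_{S_t}$ and the split intersection form finishes the proof (also note that the embeddings must be taken orientation-preserving, which property (2) supplies). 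Alternatively, follow the paper: use the shaved structure to build a path joining the ray in $S$ to a properly embedded ray in $\natural_n L\subseteq S_n$, so that the whole end-sum embeds in the Stein surface $S\natural S_n$ with $c_1$ and $H_2$ splitting, which accomplishes your engulfing in one global step. For contrast, Lemma \ref{smalllemma} and Theorem \ref{notinStein} show why some such control is essential: for generic large $\mathbb{R}^4$'s even the compact pieces need not embed in any Stein surface, but for the ones at hand the additional shaved data is what makes the gluing -- and hence the theorem -- go through.
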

\noindent Much stronger versions of this theorem can be found in Lemma \ref{almoststein} and Corollary \ref{stein}. Conversely, end-summing with the large $\mathbb{R}^4$'s produced by Theorem 1.2 can significantly influence the genus-rank function. As seen in Example \ref{endsummingex}, this operation easily modifies Stein surfaces so that they no longer  satisfy the adjunction inequality. 

Armed with this result, we systematically study the way end-summing with large $\mathbb{R}^4$'s effects the Taylor invariant, compact equivalence classes, and the genus-rank function.  In the best case, we find that all three invariants can be controlled independently. To start, we restrict our attention to two invariants: 
\begin{theorem}
Suppose that $X$ is a open topological $4$-manifold that is the interior of an oriented, spin handlebody with all indices $\leq 2$ and $0<\beta_2(X)<\infty$. Then there are smooth structures on $X$ that realize infinitely many values of Taylor invariant but all produce the same genus-rank function, and infinitely many genus-rank functions occur in this way. Similarly, there are smooth structures on $X$ that produce infinitely many genus-rank functions but all realize the same arbitrarily large (finite) value of the Taylor invariant. 
\end{theorem}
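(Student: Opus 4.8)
The plan is to interpolate between two known constructions: Gompf's Stein structures on handlebody interiors from \cite{G5}, which already realize many distinct genus-rank functions, and the end-sum operation of \cite{G0} applied with the large $\mathbb{R}^4$'s of Theorem 1.1, which will be used to slide the Taylor invariant up and down; the role of Theorem 1.3 (i.e.\ Lemma \ref{almoststein}) is to guarantee that this sliding leaves the genus-rank function untouched. As a first step I would produce a supply of Stein structures on $X$. The hypotheses on $X$ are exactly those under which the methods of \cite{G5} apply: being oriented, spin, and a handlebody with all indices $\le 2$ allows one to put the attaching link of the $2$-handles into Legendrian position with the framings needed for a Stein structure (after introducing cancelling $1$--$2$ handle pairs if necessary), while $0<\beta_2(X)<\infty$ makes the genus-rank function nonconstant and finite-valued. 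So I would quote from \cite{G5} a sequence of Stein structures $S_1,S_2,\dots$ on $X$ whose genus-rank functions $G_1,G_2,\dots$ are pairwise distinct and whose Taylor invariants vanish (or at least are uniformly bounded; only boundedness is used below).

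Next I would record the effect of end-summing with an $\mathbb{R}^4$, say $R$, from Theorem 1.1. (i) Since $R$ is contractible and homeomorphic to the standard $\mathbb{R}^4$, for any smooth $4$-manifold $M$ the end-sum $M\natural R$ is homeomorphic to $M$ and has the same homology; hence every $S_k\natural R$ is again a smooth structure on $X$. (ii) If $M$ is Stein then, by Theorem 1.3, $M\natural R$ still satisfies $M$'s adjunction inequality, so its genus-rank function is bounded below by the adjunction bound for $M$; on the other hand $M\hookrightarrow M\natural R$ induces an isomorphism on $H_2$ and (after a generic isotopy off the end-summing ray) carries embedded surfaces to embedded surfaces, so the genus-rank function of $M\natural R$ is also $\le$ that of $M$. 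Because Gompf's Stein structures realize their adjunction bounds sharply, this forces $G_{S_k\natural R}=G_{S_k}=G_k$. (iii) For the Taylor invariant one has $\gamma(M\natural R)=\max\{\gamma(M),\gamma(R)\}$ when $R$ is an exotic $\mathbb{R}^4$ (cf.\ \cite{Tay}); the lower bound $\gamma(M\natural R)\ge\gamma(R)$ is what is essential, and it holds because a compact exhaustion of $R$ realizing $\gamma(R)$ persists inside $M\natural R$.

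The theorem then follows. For the first statement, fix one $S_k$, choose a strictly increasing sequence of positive integers $n_1<n_2<\cdots$, and let $R_i$ be an $\mathbb{R}^4$ from Theorem 1.1 with $\gamma(R_i)=n_i$ (possible since Theorem 1.1 realizes arbitrarily large finite Taylor invariants). Then $\{S_k\natural R_i\}_i$ is a family of smooth structures on $X$ with Taylor invariants $n_i$ — infinitely many values, pairwise distinct once $n_i>\gamma(S_k)$ — but all with genus-rank function $G_k$ by (ii); letting $k$ vary exhibits infinitely many genus-rank functions arising this way. For the second statement, fix a large integer $n$ and a single $\mathbb{R}^4$ $R$ from Theorem 1.1 with $\gamma(R)=n\ge\sup_k\gamma(S_k)$; then $\{S_k\natural R\}_k$ is a family of smooth structures on $X$ with pairwise distinct genus-rank functions $G_k$, all of Taylor invariant $\gamma(S_k\natural R)=\max\{\gamma(S_k),n\}=n$.

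The main obstacle is step (ii): one must be sure that end-summing with a large $\mathbb{R}^4$ cannot \emph{decrease} the genus-rank function, which is precisely the adjunction-inequality persistence of Theorem 1.3 together with the sharpness of Gompf's constructions; a secondary point is pinning the Taylor invariant of the end-sum exactly, for which one uses the controlled compact exhaustions built into the $\mathbb{R}^4$'s of Theorem 1.1 and Taylor's end-sum estimate from \cite{Tay}. The remaining work is essentially bookkeeping: checking that the machinery of \cite{G5} produces the required infinite family of distinct genus-rank functions on an \emph{arbitrary} $X$ of the stated type, and that the end-summing rays may be chosen compatibly with all of the above.
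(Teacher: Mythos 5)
Your overall strategy is the paper's: take the smoothings of \cite{G5} on $X$, end-sum with the $\mathbb{R}^4$'s of Theorem \ref{inStein}, and use Lemma \ref{almoststein} to keep the inherited adjunction inequalities. But the argument as written rests on two exactness claims that are not available, and both are genuine gaps. First, the formula $\gamma(M\natural R)=\max\{\gamma(M),\gamma(R)\}$ is not in \cite{Tay} and is not known; in the spin case one only has the monotonicity lower bound $\gamma(M\natural R)\geq\max\{\gamma(M),\gamma(R)\}$, since the end-sum could \emph{a priori} contain topological balls larger than any ball in either summand. Without an upper bound your second family $\{S_k\natural R\}_k$ need not realize a single common value of $\gamma$ (each member only satisfies $\gamma\geq n$, and finiteness of each $\gamma(S_k\natural R_i)$ in your first family is also unjustified). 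The paper closes this by producing a finite upper bound depending only on $n$: it embeds each end-summed smoothing into the end-sum of the \emph{standard} smoothing with $\natural_n L$, and bounds the latter by embedding it into the manifolds $M(\rho)$ from the proof of Taylor's Theorem 6.4 (this is where the hypotheses ``all indices $\leq 2$'', $\beta_2(X)<\infty$, and $w_2$ compactly supported enter), and then pigeonholes: for fixed $n$ only finitely many values of $\gamma$ occur, and the common lower bound $\gamma(\natural_n L)$ tends to infinity with $n$.

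Second, the claim $G_{S_k\natural R}=G_{S_k}$ relies on the Stein(-Casson) smoothings of \cite{G5} realizing their adjunction bounds sharply in every relevant class; that sharpness is established only in special cases (e.g.\ $S^2\times\mathbb{R}^2$, cf.\ Example \ref{bundle}), not for an arbitrary $X$ satisfying the hypotheses, and moreover honest Stein structures on $X$ itself need not exist --- \cite{G5} provides Casson smoothings that embed into Stein-Casson smoothings of $X$, which is what the paper uses. Exact preservation of the genus-rank function is also not needed: the paper observes that the genus-rank function after end-summing is pointwise bounded below by the original one and bounded above via the adjunction inequalities preserved by Lemma \ref{almoststein} (enough to keep the first characteristic genera $g_a$ separated), and then uses $\beta_2(X)<\infty$ to conclude that for fixed $a$ only finitely many genus-rank functions arise, after which a counting argument extracts the desired subfamilies. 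So the missing idea is precisely this replacement of exact computations by two-sided bounds plus pigeonhole; with that repair your construction becomes the paper's proof.
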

\noindent After adding a technical requirement, we establish control over the missing invariant:
\begin{addendum}
If the standard smooth structure that $X$ inherits as a handlebody interior is compactly positive definite, then each pair of Taylor invariant and genus-rank function from this theorem is realized by smooth structures on $X$ representing uncountably many compact equivalence classes. 
\end{addendum}
\noindent We also provide similar theorems for each pair of invariants under various relaxed hypotheses. Additionally, we can sometimes apply other nice properties of these $\mathbb{R}^4$'s to produce uncountable families realizing each triple that occurs in these results. Precise statements of these different cases and many examples can be found in the second half of Section 3. 

The two contrasting families of $\mathbb{R}^4$'s produced in this paper rely on very different constructions. The definition  of the first family requires a careful study of Casson handles, which is used to sharpen a standard cut-and-paste argument for constructing large $\mathbb{R}^4$'s.  More specifically, we introduce a procedure  for locating Casson handles in closed $4$-manifolds.  While such a procedure already existed in various forms (e.g. see Casson's Embedding Theorem in \cite{C},  Quinn's Handle Straightening Theorem in \cite{Q}, and Gompf's exposition in Section 5 of \cite{G3a}), our method is the first to provide control over the topology of the Casson handles that are produced. After fixing our initial setup, we give a complete characterization of when a single Casson handle can be found without losing this extra control. We also investigate when multiple Casson handles can be located simultaneously. Our answer turns out to be a substantial extension of previous results. It is obtained by modifying Casson's original Embedding Theorem from \cite{C} using the Arf invariant associated to characteristic surfaces, which is defined by Freedman and Kirby in \cite{FK}. This can all be found in Section 2. The second family is constructed by exhausting the {universal $\mathbb{R}^4$} defined in \cite{FT} by smaller $\mathbb{R}^4$'s. After finding compact handlebodies that do not embed into any Stein surface, the existence of the necessary compact submanifolds follows from a technique in \cite{G1} for embedding compact handlebodies into $\mathbb{R}^4$'s.

\vspace{1em}
\textbf{Organization:} Section 2 provides a careful introduction to Casson handles, describes the procedure for locating Casson handles in closed $4$-manifolds, and investigates various applications of this procedure. In section 3, we define the two families of $\mathbb{R}^4$'s described above and study the implications of their different relationships to Stein surfaces. Finally, Section 4 analyzes the effect of end-summing with these two families.

\vspace{1em}
\textbf{Conventions:} All handlebodies are assumed to be \textit{self-indexing} and locally finite, which means they can be constructed in a locally finite way by simultaneously attaching all handles of a given index to the collection of handles that have strictly smaller index. Every open $4$-manifold is the interior of a handlebody meeting this description, and it follows from the appendix of \cite{G3} that these requirements do not increase the highest index handle required to describe a given open $4$-manifold as a handlebody interior. This paper will refer to $4$-dimensional Stein domains, the compact analogue to open Stein surfaces, as compact Stein surfaces. We direct the reader to Section 11 of \cite{GS} for more information about Stein surfaces, and also for the minimal amount of background pertaining to contact structures that is required for this paper.  As a convention, we will not assume that embeddings are orientation-preserving unless explicitly stated.  However, all immersion are assumed to be generic and all homotopies are assumed to be regular. The notation $X_\Sigma$ will be used to denote a smooth $4$-manifold that is obtained by equipping some topological $4$-manifold $X$ with a smooth structure $\Sigma$. We will leave out the subscript when the smooth structure is clear from context, as is the case for most of Section 2 and Section 3.

\vspace{1em}
\textbf{Acknowledgements:} The author would like to thank her advisor, Robert Gompf, for his thoughtful guidance and many insightful suggestions. Much of this work was completed while being supported by NSF Grant DMS-$1148490$.

\section{The first stage of embedded Casson handles}

The goal of this section is to introduce a procedure for constructing Casson handles in closed $4$-manifolds, designed to provide some control over the topology of the Casson handles that are produced. 

We begin with a discussion about kinky handles. Recall that a \textit{kinky handle} is obtained by performing self-plumbings on the standard $2$-handle. Suppose that $k$ is a kinky handle and $D$ is the immersed disk produced by the plumbing operation. The diffeomorphism type of $k$ is uniquely determined by the signs associated to the double points of $D$. To attach $k$ to a framed circle, we first attach the standard $2$-handle to this circle using the framing that is obtained by adding $-2\text{Self}(D)$ signed twists to the given framing. Then we perform the necessary self-plumbings, so that $k$ is now attaching to this circle in such a way that $D$ pushes off along the given framing to a disk whose algebraic intersection with $D$ vanishes. 
  
 It will be helpful to also have an alternate description of kinky handles. Observe that $B^4=D^2\times D^2$ is a regular neighborhood of the disk $D^2\times 0$ and also of its union with any collection of parallel disks $p_1 \times D^2, \ldots, p_g\times D^2$ for points $p_1, \ldots, p_g\in \text{int}(D^2)$. After equipping each disk with an orientation, this union is bounded by the oriented link shown in the first diagram in Figure 1. Attaching $1$-handles to $B^4$ as shown in the second diagram in Figure 1 produces a $4$-manifold that is a regular neighborhood of a smoothly immersed disk $D$, obtained by ambiently boundary summing $D^2\times 0$ with each of these parallel disks using the $2$-dimensional $1$-handles shown in red. The original choice of orientations defines an orientation on $D$, so we can arrange for the signed double points of $D$ to realize any desired configuration by choosing the correct number of parallel disks and orienting them appropriately. Observe that $\partial D$ is the circle in $\partial (B^4\cup (1\text{-handles}))$ shown in the final diagram in Figure 1. Thus, every kinky handle is diffeomorphic to $B^4\cup (1\text{-handles})$ and its attaching region is a regular neighborhood of $\partial D$ in $\partial (B^4\cup (1\text{-handles}))$,  provided we arrange for $D$ to have the appropriate signed double points. The $0$-framing on $\partial D$ from Figure 1 will be called the $0$-framing on this attaching region. From this perspective, attaching a kinky handle to a framed knot is equivalent to attaching $B^4\cup (1\text{-handles})$ along this attaching region by identifying its $0$-framing with the framing given on this knot. Using Casson's terminology, we will say that the \textit{frontier} of a kinky handle is the subset of its boundary obtained by removing its attaching region. 
 
This abstract description of kinky handles is useful when constructing Casson handles. In particular, we can define a set of framed circles on the frontier of any kinky handle $k$ by first fixing an identification with the final diagram in Figure 1 (preserving attaching regions) and then choosing a $0$-framed meridian of each dotted circle in this diagram. We say that any set of framed circles obtained in this way is a  \textit{collapsing set} for $k$. Attaching standard $2$-handles to $k$ along any collapsing set recovers the standard $2$-handle with its standard $0$-framing. To avoid special cases, we will treat the standard $2$-handle as a kinky handle with an empty collapsing set.   

\begin{figure}
\includegraphics[scale=.7]{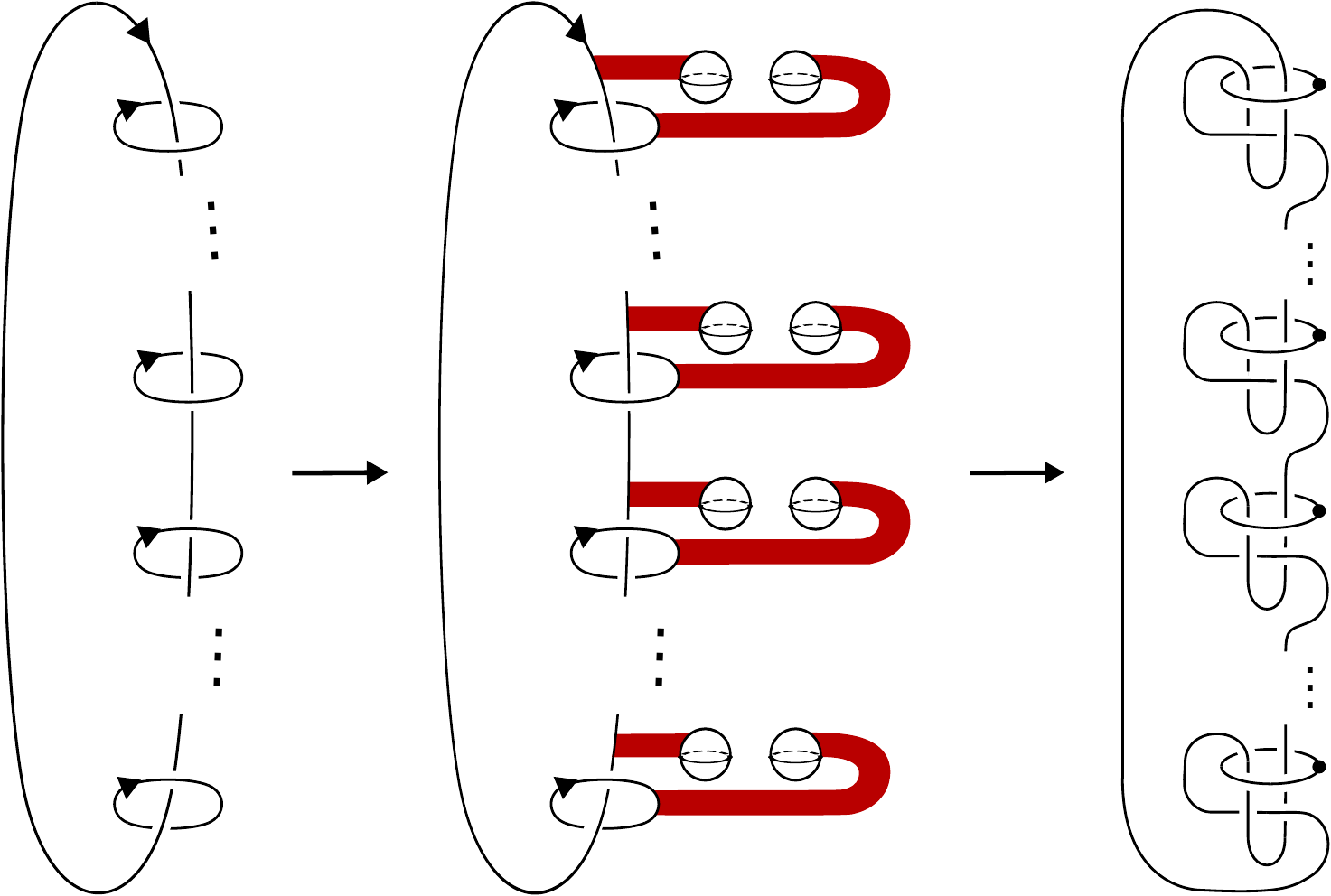}
\caption{An alternate description of kinky handles.}
\begin{picture}(0,0)
\put(98,229){\small{$\partial D$}}
\end{picture}
\end{figure}

We provide a brief description of Casson handles, but encourage the reader to see Section 2 of \cite{F}, Section 2 of \cite{G5}, or Chapter 7 of \cite{K} for alternate descriptions and more detailed exposition.  To construct a Casson handle, we start with a \textit{1-stage tower} $(T_1, \partial_-)$ consisting of a single kinky handle and its attaching region. Then we construct a \textit{2-stage tower} $T_2$ by attaching kinky handles to $T_1$ along a collapsing set for $T_1$. Next, we produce a $3$-stage tower $T_3$ by attaching kinky handles along collapsing sets for each kinky handle attached in the previous step. Iterating this procedure defines an \textit{n-stage tower} $T_n$ for every $n\in\mathbb{Z}^{>0}$. Thus, we obtain a nested sequence $T_1\subseteq T_2\subseteq T_3\subseteq \cdots$. A \textit{Casson handle} $CH$ is obtained by taking the infinite union $\bigcup_{i=1}^\infty T_i$ of any nested towers that are constructed in this manner and then removing all boundary except the interior of $\partial_-$. We refer to the immersed disk $D$ corresponding to $T_1$ as the \textit{first stage disk} of $CH$. The remaining boundary of $CH$ is the attaching region of $CH$ and it comes equipped with a $0$-framing corresponding to the $0$-framing on $\partial_-$. We can attach $CH$ to a framed circle by identifying this $0$-framing with the given framing on the circle. The fundamental result of Freedman from \cite{F} states that every Casson handle is homeomorphic to the standard open $2$-handle by a homeomorphism that preserves attaching regions. So attaching Casson handles to a collection of $0$- and $1$-handles and then removing the remaining boundary produces a smooth, open $4$-manifold that is homeomorphic (but possibly not diffeomorphic) to the result of replacing each of these Casson handles with the standard open $2$-handle. Following \cite{G5}, any smooth structure constructed in this way will be called a \textit{Casson smoothing}. If the resulting smooth $4$-manifold admits a Stein structure, then it is also referred to as a \textit{Stein-Casson smoothing}. A very useful property of Casson handles is that every Casson handle embeds into the standard $2$-handle by a smooth, orientation-preserving embedding that preserves attaching regions. 

We turn our attention to locating Casson handles in closed $4$-manifolds. We would like to realize some pre-assigned configuration of double points in the first stage disks of the Casson handles we produce. We start with a smoothly immersed sphere $S$ in a  closed $4$-manifold $X$. Observe that a regular neighborhood of $S$ in $X$ can be identified with a smoothly embedded copy of $B^4\cup T_1$ for some $1$-stage tower $T_1$ attaching to an $(S\cdot S)$-framed unknot in $\partial B^4$, with the diffeomorphism type of $T_1$ uniquely determined by the signs associated to the double points of $S$. If there are smoothly embedded, disjoint Casson handles in $X$ that ambiently attach to $B^4\cup T_1$ along a collapsing set on the frontier of $T_1$, then the interior of $B^4\cup T_1$ together with these new Casson handles is the interior of a smoothly embedded copy of $B^4\cup CH$ for some Casson handle $CH$ whose first stage is $T_1$.  We call the Casson handles attaching to $T_1$ \textit{second stage Casson handles} because they are attaching to a $1$-stage tower. So $CH$ is is obtained from $T_1$ by attaching these second stage Casson handles and then removing all remaining boundary except the interior of the attaching region of $T_1$. This construction ensures that $S$ is the union of the first stage disk of $CH$ and a smoothly embedded, unknotted disk in $B^4$. Thus, the configuration of double points in $S$ dictates the double points in the first stage disk of the resulting Casson handle $CH$. So we have reduced our task to determining conditions on smoothly immersed spheres that guarantee the existence of these second stage Casson handles. Once completed, locating a smoothly immersed sphere satisfying this criterion and realizing some desired configuration of double points is equivalent to locating a Casson handle with that configuration of double points in its first stage disk. We will later consider this setup for multiple immersed spheres in order to construct multiple Casson handles at once. 

Under sufficiently nice conditions, the proof of Casson's Embedding Theorem from \cite{C} provides a procedure for finding second stage Casson handles.  We state this theorem below using the terminology we have introduced. 
\begin{emb} Suppose that $D_1, \ldots, D_n$ are smoothly immersed disks in a smooth, simply connected $4$-manifold $Y$, with $\partial D_1, \ldots, \partial D_n$ disjointly embedded in $\partial Y$. If $D_i\cdot D_j=0$ for each $i\neq j$ and there are homology classes $\beta_1, \ldots, \beta_n\in H_2(Y)$ such that each $D_i\cdot \beta_j=\delta_{ij}$ and each $\beta_j\cdot\beta_j$ is even, then $D_1\cup \cdots \cup D_n$ can be smoothly homotoped rel $\partial$ so that $D_1, \ldots, D_n$ are the first stage disks of smoothly embedded, disjoint Casson handles $CH_1, \ldots, CH_n$ in $Y$. \end{emb}
\noindent Our primary application will only consider a single immersed disk, obtained from an immersed sphere by removing the interior of a small $B^4$ neighborhood around one of its points. However, the homotopy of $D_1\cup\cdots \cup D_n$ is not desirable because we lose control over the double points of each $D_i$. If $D_1, \ldots, D_n$ are initially disjoint and $Y-(D_1\cup\cdots \cup D_n)$ is simply connected, then it follows from the proof of this theorem that the same result holds without needing to first perform a homotopy. When $Y$ admits a spin structure, the proof also ensures that these additional hypotheses can actually replace the requirement that $\beta_1, \ldots, \beta_n$ exist and we again obtain this result without needing to perform a homotopy. In either of these two cases, the Casson handles are constructed by the method described in the previous paragraph, i.e. by identifying a regular neighborhood of $D_1\cup \cdots\cup D_n$ with disjoint $1$-stage towers and then locating disjoint second stage Casson handles in $Y$ that are ambiently attaching to this neighborhood along collapsing sets for each $1$-stage tower. A key step in the construction of these second stage Casson handles is producing immersed disks in $Y-(D_1\cup\cdots\cup D_n)$ that are bounded by these collapsing sets and each push-off by the given framing to realize an even  self-intersection number. Once these disks have been located, the existence of the second stage Casson handles only relies on the requirement that $Y-(D_1\cup \cdots\cup D_n)$ is simply connected.  Unfortunately, this proof fails to produce these disks outside of the two situations we have described. In particular, this step cannot be completed when $Y-(D_1\cup\cdots \cup D_n)$ is spin but $Y$ is not spin. This situation occurs, for example, if some $D_i$ is Poincar\`e dual to  $w_2(Y)\in H^2(Y, \mathbb{Z}_2)$. 

By using tools provided by Freedman and Kirby in \cite{FK} to choose appropriate collapsing sets, we will sometimes be able to locate these disks used in the construction of second stage Casson handles even when the conditions necessary for the proof of Casson's Embedding Theorem are not satisfied. The results from \cite{FK} associate a quadratic form $\tilde{q}:H_1(F, \mathbb{Z}_2)\to\mathbb{Z}_2$ to any smoothly embedded, orientable surface $F$ in a smooth, oriented, closed $4$-manifold $X$, provided that $F$ represents the Poincar\`e dual to $w_2(X)\in H^2(X, \mathbb{Z}_2)$. This quadratic form has the property that if $B$ is a smoothly immersed surface in $X$ bounded by a circle $a\in H_1(F, \mathbb{Z}_2)$ with the interior of $B$ disjoint from $F$, then $\tilde{q}(a)$ equals the modulo $2$ self-intersection number of $B$ obtained by pushing $B$ off along the normal framing induced by the embedding of $a$ into $F$. Theorem 1 of \cite{FK} ensures that the Arf invariant of $\tilde{q}$ equals the modulo $2$ reduction of $\frac{1}{8}(F\cdot F-\sigma(X))$.  (A nice introduction to the Arf invariant can be found in the appendix of \cite{RS}.) Unlike in \cite{FK}, we are interested in immersed spheres rather than embedded surfaces.
\begin{definition}
Let $S$ be a smoothly immersed sphere in a smooth, oriented, closed $4$-manifold $X$. We say that $(X, S)$ is a \textit{characteristic pair} if $S$ represents the Poincar\`e dual to $w_2(X)\in H^2(X, \mathbb{Z}_2)$. The \textit{Arf invariant}  of a characteristic pair $(X, S)$, denoted $\text{Arf}(X, S)$, is defined as the modulo $2$ reduction of $\frac{1}{8}(S\cdot S-\sigma(X))$. 
\end{definition}
\noindent Notice that the Arf invariant of a characteristic pair $(X, S)$ equals the Arf invariant of the bilinear form $\tilde{q}$ associated to the surface $F$ that is obtained by resolving each double point of $S$. It is precisely this relationship that will supply the disks needed in the construction of second stage Casson handles when the proof of Casson's Embedding Theorem cannot be implemented. 

Applying the methods we have introduced, we can now provide a criterion for determining if a smoothly immersed sphere in a closed $4$-manifold decomposes as the union of a smoothly embedded disk and the first stage disk of a smoothly embedded Casson handle.  
\begin{lemma} 
\label{mainlemma}
Let $S$ be a smoothly immersed sphere in a smooth, oriented, closed $4$-manifold $X$ with $X-S$ simply connected. Suppose that either $(X, S)$ is a characteristic pair with $\text{Arf}(X, S)=0$ or $(X, S)$ is not a characteristic pair. Then there is a Casson handle $CH$ such that $B^4\cup CH$ admits a smooth, orientation-preserving embedding into $X$ with $S$ equal to the union of the first stage disk of $CH$ and a smoothly embedded, unknotted disk in $B^4$, where $CH$ is attaching to $B^4$ along an $(S\cdot S)$-framed unknot in $\partial B^4$. If instead $(X, S)$ is a characteristic pair with $\text{Arf}(X, S)\neq 0$, then there is no Casson handle with such an embedding.
\end{lemma}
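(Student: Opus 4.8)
The plan is to analyze the two directions of the lemma separately, using the reduction to second stage Casson handles described in the text. Throughout, write $N$ for a regular neighborhood of $S$ in $X$; since a regular neighborhood of an immersed sphere with prescribed double point signs is diffeomorphic to $B^4 \cup T_1$ for a suitable $1$-stage tower $T_1$ attached to an $(S\cdot S)$-framed unknot in $\partial B^4$, the content of the positive direction is to produce disjoint, smoothly embedded second stage Casson handles attached to $N$ along a collapsing set on the frontier of $T_1$, lying in $X - N$. By the discussion following the statement of Casson's Embedding Theorem, once we have immersed disks in $X - S$ bounded by the chosen collapsing set circles, each of which pushes off along the given framing to an \emph{even} self-intersection number, the existence of the required second stage Casson handles follows purely from $\pi_1(X - N) = \pi_1(X - S) = 1$ (using Freedman's disk embedding machinery, exactly as in Casson's proof). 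So the positive direction reduces to: choose a collapsing set for $T_1$ for which such even-framed immersed disks exist.

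For the positive direction I would split into cases. If $(X,S)$ is \emph{not} a characteristic pair, then $[S]$ is not Poincaré dual to $w_2(X)$; after resolving the double points of $S$ to get an embedded surface $F$, $[F]$ is not characteristic, so there is a class $h \in H_2(X)$ with $F \cdot h$ odd, equivalently $h$ pairs nontrivially with $S$ mod $2$. Represent $h$ by an immersed sphere and tube it into the frontier of $T_1$ to arrange an algebraic dual; then a half-integer/parity adjustment to the framings on the collapsing set circles lets one trade odd self-intersections for even ones by tubing into parallel copies of this dual — this is precisely the role played by the classes $\beta_j$ in Casson's theorem, here constructed from the failure of $S$ to be characteristic. (When $X$ itself is spin the collapsing set circles bound even-framed disks automatically, and nothing needs to be done.) If $(X,S)$ \emph{is} a characteristic pair with $\mathrm{Arf}(X,S) = 0$, then by the definition and the cited Freedman–Kirby theorem, the quadratic form $\tilde q$ on $H_1(F,\mathbb{Z}_2)$ attached to the resolved surface $F$ has vanishing Arf invariant. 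The key point is that the collapsing set circles for $T_1$, when pushed onto $F$, span a subspace of $H_1(F,\mathbb{Z}_2)$ on which we are free to choose a symplectic basis; since $\mathrm{Arf}(\tilde q) = 0$, we can pick a collapsing set whose circles all satisfy $\tilde q = 0$, i.e. each bounds an immersed surface in $X$ with interior disjoint from $F$ (hence, after a small isotopy, from $S$) pushing off to an even mod $2$ self-intersection. This produces exactly the disks needed, and then $\pi_1(X - S) = 1$ finishes the construction of the second stage Casson handles, giving the embedded $B^4 \cup CH$.

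For the negative direction — $(X,S)$ a characteristic pair with $\mathrm{Arf}(X,S) \neq 0$ — I would argue by contradiction. Suppose $B^4 \cup CH \hookrightarrow X$ with $S$ realized as the union of the first stage disk of $CH$ and an unknotted disk in $B^4$. A Casson handle contains embedded $n$-stage towers for all $n$, and the frontier of an $n$-stage tower admits a collapsing set whose circles bound embedded (not just immersed) disks in the tower itself, capping off to give an embedded surface homologous to $F$ in $X$ on which $\tilde q$ must vanish identically — but a quadratic form that is identically zero on a whole symplectic subspace complementary summand has Arf invariant zero, contradicting $\mathrm{Arf}(X,S) \neq 0$. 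More carefully: capping $S$ off inside the embedded Casson handle produces, after resolving double points, an embedded surface $F'$ representing the characteristic class with $\tilde q_{F'} \equiv 0$ on the $H_1$-classes coming from the tower's collapsing circles, forcing $\frac18(F' \cdot F' - \sigma(X)) \equiv 0 \pmod 2$; but $F' \cdot F' = S \cdot S$ since the capping disks are framed and disjointly embedded, so $\mathrm{Arf}(X,S) = 0$, a contradiction.

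The main obstacle I expect is the positive direction in the characteristic, $\mathrm{Arf} = 0$ case: one must verify that the homological freedom in choosing a collapsing set for $T_1$ genuinely lets you land inside the zero-set of $\tilde q$, i.e. that the collapsing circles can be taken to be a symplectic-type basis of a subspace on which $\tilde q$ restricts with Arf invariant $0$, and that this choice is compatible with the framings dictated by the collapsing set construction (so the resulting immersed disks really do push off to even self-intersections). Making this compatibility precise — reconciling the Freedman–Kirby quadratic form, defined via normal framings induced by embeddings into $F$, with the $0$-framings on meridians of the dotted circles in the kinky handle picture — is the technical heart of the argument.
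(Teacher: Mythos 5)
Your reduction of the positive direction to producing even-framed immersed disks bounded by a collapsing set, and then invoking simple connectivity of $X-S$ plus the machinery of Casson's Embedding Theorem, is the paper's strategy, and your non-characteristic case is essentially the paper's (with one slip: non-characteristicity gives a class $y$ with $[S]\cdot y + y\cdot y$ odd, not a class pairing oddly with $S$ --- the latter comes from $\pi_1(X-S)=1$ --- and these must be combined to produce $\beta$ with $[S]\cdot\beta=1$ and $\beta\cdot\beta$ even before citing Casson's proof). The real problem is the characteristic, $\mathrm{Arf}=0$ case: you assert, and then yourself flag as unverified, exactly the step that constitutes the proof, namely that the freedom in choosing a collapsing set lets you land in the zero set of $\tilde q$ with the correct framings. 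Vanishing of the Arf invariant alone does not give this. The paper's argument rests on two concrete inputs you do not have: (i) in the Kirby-diagram model of $\mathcal{N}(S)$, the dual curves $\tilde x_i$ are isotopic in $\partial\mathcal{N}(F)$ to the attaching circles of the $\pm1$-framed $2$-handles, so $\tilde q(a_i)=1$ for \emph{every} $i$, whence $\mathrm{Arf}(\tilde q)$ is the mod $2$ count of $b_i$ with $\tilde q(b_i)=1$ and that count is even; and (ii) the required change of collapsing set is realized geometrically by Dehn twists along circles $C_i$ lying on $S\cap F$ that extend to a self-diffeomorphism of $\mathcal{N}(S)$ preserving $T_1$ and its attaching region, so the new curves $\tilde y_i'$ are still a collapsing set \emph{and} are still framed push-offs of curves on $F$ with their $F$-induced framings, with $b_i\mapsto b_i+a_i+a_{i\pm1}$ in pairs killing $\tilde q$. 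Reconciling the collapsing-set framings with the Freedman--Kirby framings, which you correctly identify as the technical heart, is precisely what these two steps accomplish; without them the case is not proved.

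The negative direction is where your argument actually fails. The collapsing circles on the frontier of an $n$-stage tower do \emph{not} bound embedded disks in the tower: at intermediate stages they bound only the immersed cores of the next stage's kinky handles, and at the top stage they bound nothing inside the tower --- this is the entire reason Casson handles are infinite objects. The surface $F'$ you invoke and the claim $F'\cdot F'=S\cdot S$ are not constructed or justified. A salvageable smooth version of your idea would use the second-stage kinky-handle cores as immersed disks in $X-F$ with algebraically zero framed self-intersection, so that $\tilde q$ vanishes on the Lagrangian spanned by the collapsing circles and $\mathrm{Arf}(\tilde q)=0$; but this again requires showing that the attaching framings of the second-stage handles coincide with the $F$-induced framings of curves on $F$, i.e. the same framing analysis as above, which you have not supplied. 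The paper instead proves this direction by a purely topological argument: blow up to arrange $S\cdot S=1$, use Freedman's theorem that $CH$ is homeomorphic to the open $2$-handle to split $X$ topologically as $\mathbb{C}P^2\# Y$ with $Y$ even and $\sigma(Y)=\sigma(X)-S\cdot S$, and then compute $\mathrm{ks}(X)=\mathrm{ks}(Y)=\mathrm{Arf}(X,S)\neq 0$, contradicting the fact that $X$ is smooth. That route needs no smooth information about the interior of the Casson handle and sidesteps the framing issue entirely; your proposed argument, as written, does not establish the non-existence statement.
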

\noindent We note that the content of this lemma primarily comes from the characteristic cases, as the non-characteristic case is essentially a corollary to the proof of Casson's Embedding Theorem. 

\begin{proof}
We begin with the case when $(X, S)$ is not a characteristic pair, i.e. the modulo $2$ reduction of $[S]\in H_2(X)$ is not dual to $w_2(X)\in H^2(X, \mathbb{Z}_2)$. The first step is to construct an element $\beta\in H_2(X)$ with $[S]\cdot \beta=1$ and $\beta\cdot\beta$ even. Observe that $H_2(X; \mathbb{Z}_2)\cong H_2(X)\otimes \mathbb{Z}_2$ because the requirement that $X-S$ is simply connected forces $X$ to also be simply connected. Since $w_2(X)$ is uniquely characterized by the property that its pairing with any $a\in H_2(X, \mathbb{Z}_2)$ equals the square of $a$ under the standard $\mathbb{Z}_2$-pairing on $H_2(X, \mathbb{Z}_2)$, it follows that there is some $y\in H_2(X)$ with $[S]\cdot y +y\cdot y$ odd. The fact that $X-S$ is simply connected ensures the existence of a class $z\in H_2(X)$ with $[S]\cdot z=1$. If $z\cdot z$ is even, let $\beta=z$. Otherwise, let $\beta=y+(1-[S]\cdot y) z$. In either case, $[S]\cdot \beta=1$ and $\beta\cdot \beta$ is even. Now choose a small $B^4$ neighborhood around a point in $S$. Observe that $D=S-\text{int}(B^4)$ is a smoothy immersed disk in the smooth, simply connected $4$-manifold $Y=X-\text{int}(B^4)$. Since $D$ has a simply connected complement in $Y$ and $\beta$ is obviously carried in $Y$, our earlier discussion ensures that proof of Casson's Embedding Theorem now produces a smoothly embedded Casson handle $CH$ in $Y$ whose first stage disk is $D$. Hence, we have located a smoothly embedded copy of $B^4\cup CH$ in $X$ with $S$ equal to the union of the first stage disk $D$ of $CH$ and a smoothly embedded, unknotted disk in $B^4$. It is clear that $CH$ is attaching to $B^4$ along an $(S\cdot S)$-framed unknot in $\partial B^4$. 

Next, consider the case when $(X, S)$ is a characteristic pair with $\text{Arf}(X, S)\neq 0$. To obtain a contradiction, we suppose that there is a Casson handle $CH$ and smoothly embedded copy of $B^4\cup CH$ in $X$ meeting the necessary description. We can assume that $S\cdot S=1$ by blowing up an appropriate number of times and ambiently connect summing $S$ with a $\mathbb{C}P^1$ from each $\mathbb{C}P^2$ or $\overline{\mathbb{C}P^2}$ summand, ensuring that $CH$ is attaching to $B^4$ along a $1$-framed unknot in $\partial B^4$. Then the interior of $B^4\cup CH$ is homeomorphic to $\mathbb{C}P^2-\{\text{pt}\}$ and, consequently, $X$ is homeomorphic to $\mathbb{C}P^2\# Y$ for some $Y$ with $\sigma(Y)$=$\sigma(X)-1=\sigma(X)-S\cdot S$. Using the assumption that $(X, S)$ is a characteristic pair, notice that $Y$ is even because $H_2(Y)=\left<[S]\right>^\perp\subseteq H_2(X)$ and $[S]\cdot \alpha\equiv \alpha\cdot \alpha \; (\text{mod } 2)$ for every $\alpha\in H_2(X)$. Recall that the Kirby-Siebenmann invariant of a topological $4$-manifold $M$, denoted $\text{ks}(M)$, is an invariant taking values in $\mathbb{Z}_2$ that is additive under connected sum, vanishes if $M$ admits a smooth structure, and equals the modulo $2$ reduction of $\frac{1}{8}\sigma(M)$ if $M$ is even. So $\text{ks}(X)=\text{ks}(\mathbb{C}P^2)+\text{ks}(Y)=0+\text{Arf}(X, S)\neq 0$. However, this is clearly a contradiction because $X$ admits a smooth structure. 

For the remainder of the proof, we suppose that $(X, S)$ is a characteristic pair with $\text{Arf}(X, S)= 0$. The first step is to identify $\mathcal{N}(S)$ with a smoothly embedded copy of $B^4\cup T_1$ in $X$ for  some $1$-stage tower $T_1$. Let $f=S\cdot S$ and let $g$ denote the number of double points of $S$.  Consider the Kirby diagram in Figure 2, where the $1$-handle attaching spheres are identified in the usual way. Arrange for there to be exactly one $+1$-framed (resp. $-1$-framed) $2$-handle for each negative (resp. positive) double point of $S$. Instead of passing to dotted circle notation, we make sense of the framing coefficients by assuming reference arcs have been specified so that the $0$-framing on each $2$-handle is given by its blackboard framing.  Then we can fix an identification of $\mathcal{N}(S)$ with this Kirby diagram so that $S$ is obtained by pushing a spanning disk for each red curve shown on the left in Figure 3 into the $0$-handle, attaching the obvious $3g$ $2$-dimensional $1$-handles, and capping off by the core of the $f$-framed $2$-handle and two (oppositely oriented) parallel copies of the core of each $\pm1$-framed $2$-handle. Notice that removing the $f$-framed $2$-handle from $\mathcal{N}(S)$ produces a $1$-stage tower $T_1$ whose attaching region agrees with the attaching region of the missing $2$-handle. So there is a smooth, orientation-preserving embedding of $B^4\cup T_1$ into $X$ with $\mathcal{N}(S)=B^4\cup T_1$ that is defined by sending $B^4$ to this $f$-framed $2$-handle and sending $T_1$ to the union of the remaining handles. (Choosing our embedding in this way will make it easier to work with circles on the frontier of $T_1$.) Observe that $T_1$ must be attaching to $B^4$ along an $f$-framed unknot.  Also note that $S$ is the union of a smoothly embedded, unknotted disk in $B^4$ and the smoothly immersed disk associated to $T_1$. 

\begin{figure}[t]
	\includegraphics[scale=.9]{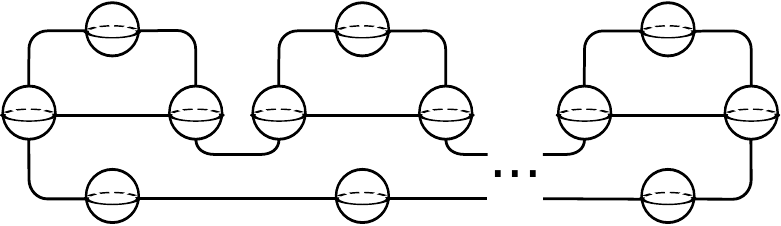}
	\begin{picture}(0,0)
    	\put(-204,48){$f$}
    	\put(-183,31){\small{$\pm1$}}
    	\put(-117,31){\small{$\pm1$}}
    	\put(-38,31){\small{$\pm1$}}
    \end{picture}  
    \caption{A Regular neighborhood of an immersed sphere.}
\end{figure}

We locate the smoothly embedded, orientable surface $F$ obtained by resolving the double points of $S$ and then fix a collapsing set for $T_1$. We claim that $F$ can be seen in the Kirby diagram for $\mathcal{N}(S)$ by pushing a spanning disk for the red curve shown on the right in Figure 3 into the $0$-handle, attaching the obvious $2g$ $2$-dimensional $1$-handles, and capping off by the core of the $f$-framed $2$-handle. This follows because $S$ is obtained from the surface meeting this description by cutting out the grey annuli shown on the right in Figure 3 (pushed into the interior of the $4$-manifold to sit on the surface) and then using each $\pm 1$-framed $2$-handle to replace it with the standard model for a transverse double point of the appropriate sign.  Notice that every circle on $F$ comes with a standard framing induced by its normal framing in $F$.  It follows from our description of $F$ that there are circles $x_1, y_1, \ldots, x_g, y_g$ on $F$ that represent a symplectic basis for $H_1(F)$ and push-off to the circles $\tilde x_1, \tilde y_1, \ldots, \tilde x_g, \tilde y_g$ shown on the left in Figure 4, sending this standard framing on  $x_1, y_1, \ldots, x_g, y_g$ to  the blackboard framing on  $\tilde x_1, \tilde y_1, \ldots, \tilde x_g, \tilde y_g$. The circles $\tilde y_1, \ldots, \tilde y_g$ equipped with their blackboard framings form a collapsing set on the frontier  of $T_1$. 

Before proceeding, we study the quadratic form from \cite{FK} associated to the embedding of $F$ into $X$.  Since $F$ represents the dual to $w_2(X)\in H^2(X, \mathbb{Z}_2)$, this quadratic form $\tilde q:H_1(F, \mathbb{Z}_2)\to\mathbb{Z}_2$ is indeed well-defined. For each $i=1, \ldots, g$, let $a_i$ and $b_i$ denote the elements of $H_1(F, \mathbb{Z}_2)$ represented by $x_i$ and $y_i$ respectively. Then $a_1, b_1, \ldots, a_g, b_g$ is a basis for $H_1(F, \mathbb{Z}_2)$ with each $a_i\cdot b_j=\delta_{ij}$ under the standard pairing on $H_1(F, \mathbb{Z}_2)$.  The description in the previous paragraph ensures that $\mathcal{N}(F)$ is obtained from $\mathcal{N}(S)$ by removing the $\pm1$-framed $2$-handles from the Kirby diagram in Figure 2. So each $\tilde x_i$ lies on $\partial \mathcal{N}(F)\cap \partial \mathcal{N}(S)$ and can be isotoped on $\partial\mathcal{N}(F)$ to the attaching circle of one of these $\pm1$-framed $2$-handles, with the blackboard framing on $\tilde x_i$ mapping to the $0$-framing on this attaching circle. Thus, each $\tilde{x}_i$ bounds a smoothly embedded disk in $X-F$ that realizes an odd self-intersection number using the blackboard framing on $\tilde{x}_i$. Calling on the introduction of $\tilde{q}$ given earlier, this allows us to conclude that each $\tilde{q}(a_i)=1$ because the standard framing on $x_i$ pushes-off to the blackboard framing on $\tilde{x}_i$. (Alternatively, this can be seen using the methods of Section 8.2 in \cite{GS}.) Consequently, it follows from the appendix of \cite{RS} that $\text{Arf}(\tilde{q})$ can be computed simply by counting the number of $b_i$ with $\tilde{q}(b_i)=1$ and then reducing the result modulo $2$. On the other hand, Theorem 1 of \cite{FK} discussed earlier guarantees that $\text{Arf}(\tilde{q})=\text{Arf}(X, S)=0$. This means that the number of $b_i$ with $\tilde{q}(b_i)=1$ is even. Without loss of generality, we can therefore assume that $\tilde{q}(b_i)=1$ for $i=1, \ldots, 2k$ and $\tilde{q}(b_i)=0$ for $i=2k+1, \ldots, g$.  (Technically, this assumption changes the way that we initially identify $\mathcal{N}(S)$ with the Kirby diagram in Figure 2.) 

\begin{figure}[t]
  \hspace*{\fill}
  \subcaptionbox*{}{\includegraphics[scale=.9]{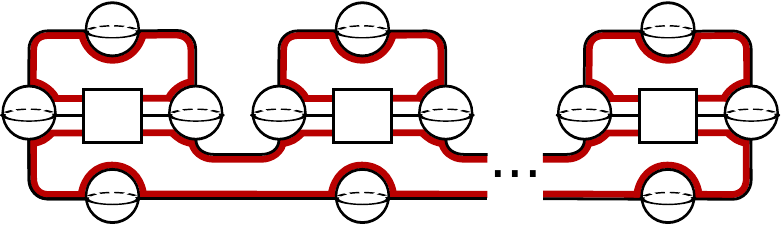}}\hfill
  \subcaptionbox*{}{\includegraphics[scale=.9]{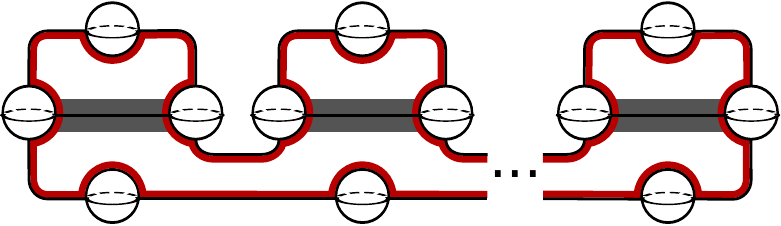}}
  \hspace*{\fill}
  \begin{picture}(0,0)
    \put(-403.5,26){\small{$\pm1$}}
    \put(-338.5,26){\small{$\pm1$}}
    \put(-259,26){\small{$\pm1$}}
  \end{picture} 
  \vspace{-1.5em}
  \caption{Resolving an immersed sphere to an embedded surface.} 
\end{figure}

Our next task is to replace the collapsing set found above with one that can be obtained by pushing-off circles on $F$ whose image under $\tilde{q}$ vanishes. Consider the $k$ disjoint circles $C_1, C_3, \ldots, C_{2k-1}$ with each $C_i$ as shown on the right in Figure 4. Each $C_i$ can be pushed into the interior of $\mathcal{N}(S)$ to lie on $S\cap F$, so that there is a self-diffeomorphism $\varphi:S\to S$ obtained by Dehn twisting on each resulting circle. This extends to a self-diffeomorphism $\varphi: \mathcal{N}(S)\to\mathcal{N}(S)$ with $\varphi(F)=F$. For each $i=1, \ldots, g$, let $y_i'=\varphi(y_i)\subseteq F$ and let $b_i'\in H_1(F, \mathbb{Z}_2)$ be the homology class it represents. The circles $\tilde y_1, \ldots, \tilde y_g$ with their blackboard framings map under $\varphi$ to  framed push-offs $\tilde y_1', \ldots, \tilde y_g'$ of the circles $y_1', \ldots, y_g'$ on $F$ with their standard framing. In particular, the framed circles $\tilde y_1', \ldots, \tilde y_g'$ also form a collapsing set on the frontier of $T_1$ because $\varphi$ maps $T_1\subseteq \mathcal{N}(S)$ to itself while keeping its attaching region fixed. Observe that each $b_i'=b_i+a_i+a_{i+1}$ for odd $i=1, \ldots, 2k-1$, each $b_{i}'=b_{i}+a_{i-1}+a_{i}$ for even $i=2, \ldots, 2k$, and each $b_i'=b_i$ for $i=2k+1, \ldots, g$. By applying the fact that $\tilde{q}$ is a quadratic form, it is now easy to verify that indeed $\tilde q(b_i')=0$ for each $i=1, \ldots, g$. 

Finally, we find the necessary Casson handle. Since $X- S$ is simply connected, there are smoothly immersed disks in $X-\text{int}(\mathcal{N}(S))$ bounded by $\tilde y_1', \ldots, \tilde y_g'$. Recall that each $\tilde{q}( b_i')=0$ and each $\tilde{y}_i'$ is a framed push-off of $y_i'$ with its standard framing. Hence, the introduction of $\tilde{q}$ given above allows us to conclude that each of these disks pushes off along the framing on $\tilde{y}_i'$ to realize an even self-intersection number.  As described earlier, the final step in proof of Casson's Embedding Theorem can now be completed to construct smoothly embedded, disjoint second stage Casson handles $CH_1, \ldots, CH_g$ in $X$ ambiently attaching to $\mathcal{N}(S)$ along $\tilde y_1', \ldots, \tilde y_g'$ using their framings. So there is a smoothly embedded Casson handle $CH$  in $X$ obtained from $T_1$ by attaching $CH_1, \ldots, CH_g$ and then removing all boundary except the interior of the attaching region of $T_1$. Therefore, we have located a smoothly embedded copy of $B^4\cup CH$ with $CH$ attaching to an $f$-framed unknot in $B^4$.  Since the first stage disk of $CH$ is the immersed disk corresponding to $T_1$, we can conclude that $S$ is equal to the union of the first stage disk of $CH$ and a smoothly embedded, unknotted disk in $B^4$.

\begin{figure}[t]
  \hspace*{\fill}
  \subcaptionbox*{}{\includegraphics[scale=.9]{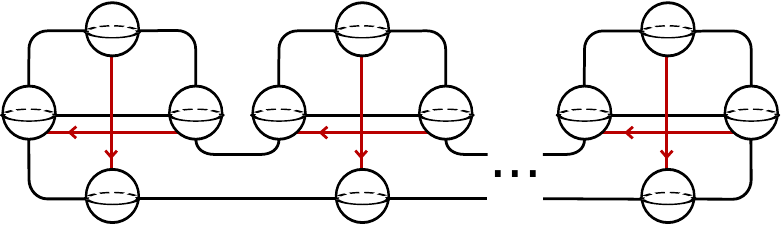}}\hfill
  \subcaptionbox*{}{\includegraphics[scale=.9]{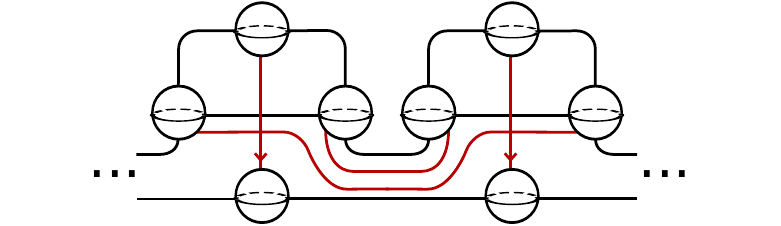}}
  \hspace*{\fill}
   \put(-391,17){\textcolor[RGB]{185,0,0}{\small{$\tilde x_1$}}}
    \put(-325,17){\textcolor[RGB]{185,0,0}{\small{$\tilde x_2$}}}
    \put(-246,17){\textcolor[RGB]{185,0,0}{\small{$\tilde x_g$}}}
    \put(-398,37){\textcolor[RGB]{185,0,0}{\small{$\tilde y_1$}}}
    \put(-333,37){\textcolor[RGB]{185,0,0}{\small{$\tilde y_2$}}}
    \put(-253.5,37){\textcolor[RGB]{185,0,0}{\small{$\tilde y_g$}}}
    \put(-73,16){\textcolor[RGB]{185,0,0}{\small{$C_i$}}}
    \put(-81,37){\textcolor[RGB]{185,0,0}{\small{$\tilde y_{i+1}$}}}
    \put(-146,37){\textcolor[RGB]{185,0,0}{\small{$\tilde y_{i}$}}}
    \vspace{-1.5em}
    \caption{Modifying the push-off of a symplectic basis for $H_1(F)$.}
\end{figure}
 
\end{proof}

We provide an example of the characteristic case with vanishing Arf invariant, although its usefulness will not be clear until the next section. Generalizing this example will be the first step in the proof \linebreak of Theorem \ref{inStein}. 
\begin{example}
Let $X=\overline{\mathbb{C}P^2}\#48{\mathbb{C}P^2}$, let $e_0, e_1, \ldots, e_{48}$ be the standard basis for $H_2(X)$, and let $\alpha=7e_0+e_1+\cdots + e_{48}\in H_2(X)$. It is easy to construct a smoothly immersed sphere $S$ representing $\alpha$ in $X$ that has only negative double points and has a simply connected complement. Notice that $(X, S)$ is a characteristic pair with $\text{Arf}(X, S)=0$. So Lemma \ref{mainlemma} provides a smoothly embedded copy of $B^4\cup CH$ in $X$ for some Casson handle $CH$ with only negative double points in its first stage disk, attaching to $B^4$ along a $-1$-framed unknot in $\partial B^4$. Furthermore, the topological sphere generating the second homology of $B^4\cup CH$ represents the characteristic element $\alpha\in H_2(X)$. 
\end{example}

Though we will not need more generality for our applications, we observe that this lemma can also be extended to a collection of smoothly immersed spheres to produce multiple Casson handles simultaneously. Recall that any configuration of smoothly immersed spheres in a smooth, oriented $4$-manifold has an associated graph with signed edges and weighted vertices. Conversely, every such graph corresponds to a handle decomposition for a regular neighborhood of any matching configuration of spheres. A sphere in one of these configurations has a double point if and only if there is a loop at the vertex corresponding to this sphere in the matching graph. (Further discussion about these relationships can be found in Section 6.1 of \cite{GS}.) The following corollary compiles conditions to ensure that a given configuration of smoothly immersed spheres can be ``capped off" by second stage Casson handles, as we did for a single sphere in the previous lemma.   
\begin{cor}
\label{config}
Let $S_1, \ldots, S_n$ be smoothly immersed spheres in a  smooth, oriented, closed $4$-manifold $X$ with $X-(S_1\cup\cdots\cup S_n)$ simply connected. Define $G$ to be the graph obtained by removing all loops from the graph corresponding to this configuration of spheres. Suppose that either some $(X, S_i)$ is a characteristic pair with $\text{Arf}(X, S_i)=0$ or ambiently connect summing combinations of $S_1, \ldots, S_n$ never results in a smoothly immersed sphere that forms a characteristic pair with $X$. Then there are Casson handles $CH_1, \ldots, CH_n$ such that $B^4\cup (1\text{-handles})\cup CH_1\cup\cdots \cup CH_n$ admits a smooth, orientation-preserving embedding into $X$ with each $S_i$ equal to the union of the first stage disk in $CH_i$ and a smoothly embedded disk in $B^4\cup (1\text{-handles})$, where $CH_1, \ldots, CH_n$ are attaching to $B^4\cup (1\text{-handles})$ in place of $2$-handles in the standard handle decomposition corresponding to $G$. 
\end{cor}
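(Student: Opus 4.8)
The plan is to imitate the proof of Lemma~\ref{mainlemma}, replacing the single $1$-stage tower by a collection of $1$-stage towers glued together according to $G$. First I would fix a regular neighborhood $\mathcal{N}$ of $S_1\cup\cdots\cup S_n$ and identify it with a smoothly embedded copy of $B^4\cup(1\text{-handles})\cup T_1^{(1)}\cup\cdots\cup T_1^{(n)}$, where each $T_1^{(i)}$ is a $1$-stage tower attaching in place of the $2$-handle indexed by the $i$-th vertex of $G$, carrying the framing $S_i\cdot S_i$ and with self-plumbings realizing precisely the loops (i.e.\ double points of $S_i$) that were removed from the graph of the configuration. Exactly as in the lemma, this identification can be arranged so that each $S_i$ is the union of the smoothly immersed disk associated to $T_1^{(i)}$ and a smoothly embedded disk in $B^4\cup(1\text{-handles})$. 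Because $X-(S_1\cup\cdots\cup S_n)$ is simply connected, once we know that each $T_1^{(i)}$ admits a collapsing set whose circles bound smoothly immersed disks in $X-\text{int}(\mathcal{N})$ that push off along the given framings to even self-intersection numbers, the final step of the proof of Casson's Embedding Theorem applies verbatim and disjointly, capping $\mathcal{N}$ off with disjoint second stage Casson handles and producing $CH_1,\ldots,CH_n$ with the stated embedding. So the entire problem reduces to the simultaneous construction of these good collapsing sets.

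In the case where ambiently connect summing combinations of $S_1,\ldots,S_n$ never yields a characteristic pair with $X$, I would reproduce the non-characteristic argument of Lemma~\ref{mainlemma} tower by tower. The hypothesis means that for every nonempty $I\subseteq\{1,\ldots,n\}$ the mod~$2$ reduction of $\sum_{i\in I}[S_i]$ is not dual to $w_2(X)$; together with the simple-connectivity of the complement, this is exactly the input needed to produce homology classes $\beta_1,\ldots,\beta_n\in H_2(X)$ with $[S_i]\cdot\beta_j=\delta_{ij}$ and each $\beta_j\cdot\beta_j$ even. Indeed, each $\beta_j$ is built as in the lemma from a class $y$ with $[S_j]\cdot y+y\cdot y$ odd and a class dual to $[S_j]$ supported in the complement, and the subset condition is what allows the off-diagonal pairings to be corrected. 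Restricting the $\beta_j$ to $Y=X-\text{int}(B^4)$ gives the algebraically dual even classes demanded by Casson's Embedding Theorem, whose proof then supplies collapsing sets of the required kind with no preliminary homotopy, since the disks are already disjoint with simply connected complement. (When the relevant complement is spin one may instead quote the spin version of the theorem, as was done in the lemma.)

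In the case where some $(X,S_i)$ is a characteristic pair with $\text{Arf}(X,S_i)=0$, I would feed this $S_i$ through the Freedman--Kirby machinery exactly as in the last part of the proof of Lemma~\ref{mainlemma}: resolve the double points of $S_i$ to a smoothly embedded surface $F$ dual to $w_2(X)$, observe that push-offs along $F$ of a symplectic basis for $H_1(F)$ furnish a collapsing set for $T_1^{(i)}$, verify that $\tilde q$ equals $1$ on one half of this basis so that $\text{Arf}(\tilde q)=\text{Arf}(X,S_i)=0$ forces an even number of the other half to have $\tilde q=1$, and then Dehn twist $F$ by an ambient self-diffeomorphism of $\mathcal{N}$ supported near $S_i$ (hence fixing every $T_1^{(j)}$ with $j\neq i$) to replace this collapsing set by one consisting of push-offs of curves on which $\tilde q$ vanishes, so that the bounding immersed disks in the simply connected complement have even self-intersection numbers. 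The spheres $S_j$ with $j\neq i$ that are non-characteristic are handled by the argument of the previous paragraph; the case of additional characteristic spheres is folded into the obstacle discussed below. Assembling the collapsing sets for all $n$ towers and invoking the endgame of Casson's Embedding Theorem finishes the construction, and the identification of each $S_i$ with the union of the first stage disk of $CH_i$ and an embedded disk, as well as the attaching pattern prescribed by $G$, is read off exactly as in the lemma.

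The step I expect to be the main obstacle is the \emph{simultaneity}: arranging all $n$ collapsing sets to be good at once while the towers are linked according to $G$ and while the list $S_1,\ldots,S_n$ may mix characteristic and non-characteristic spheres. Concretely, one must check that the homology-class corrections in the first case can be made without disturbing the diagonal conditions $[S_i]\cdot\beta_i=1$, that the Dehn-twist self-diffeomorphisms used to repair the characteristic towers can be chosen with pairwise disjoint supports so that they do not interfere, and that the Freedman--Kirby quadratic form attached to the surface obtained by resolving one distinguished $S_i$ records enough information about the remaining spheres to control all the relevant self-intersection parities. Once every collapsing set is in place, the remainder is a routine disjoint repetition of the last step of Casson's Embedding Theorem, which uses only that $X-(S_1\cup\cdots\cup S_n)$ is simply connected.
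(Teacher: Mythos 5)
Your reduction (identify $\mathcal{N}(S_1\cup\cdots\cup S_n)$ with $B^4\cup(1\text{-handles})\cup T_1^{(1)}\cup\cdots\cup T_1^{(n)}$, then reduce to finding collapsing sets whose circles bound immersed disks in the complement with even relative self-intersection) and your endgame match the paper, but there is a genuine gap in the characteristic branch. When the hypothesis only says that \emph{some} $(X,S_i)$ is a characteristic pair with $\text{Arf}(X,S_i)=0$, nothing whatsoever is assumed about the remaining spheres: some other $S_j$, or some combination, may well be characteristic, possibly with nonzero Arf invariant. Your plan treats the non-characteristic $S_j$ ``by the argument of the previous paragraph,'' but that argument needs simultaneously defined classes $\beta_j$ of even square dual to the $[S_j]$, and their existence was deduced from the hypothesis that \emph{no} combination is characteristic --- a hypothesis you do not have in this branch (Example 2.5 of the paper shows such systems can fail to exist even when each individual $S_i$ is non-characteristic). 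And for additional characteristic spheres you explicitly defer the problem to an unresolved ``obstacle''; repairing each such tower by its own Arf-based Dehn twists cannot work in general, since those spheres may have nonzero Arf invariant. The missing idea, which is how the paper closes this case, is that after correcting the collapsing circles $\tilde y_1,\ldots,\tilde y_m$ on $\partial\mathcal{N}(S_1)$ via the argument of Lemma \ref{mainlemma}, the fact that $S_1$ is characteristic makes $X-\text{int}(\mathcal{N}(S_1))$ spin; hence $D_2,\ldots,D_n$ are disjoint immersed disks with simply connected complement in the smooth, spin, simply connected manifold $X-\text{int}(N\cup\mathcal{N}(S_1))$, and the spin case of Casson's Embedding Theorem (which requires no dual classes at all) supplies good disks for all the remaining circles $\tilde y_{m+1},\ldots,\tilde y_g$ at once. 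This disposes of every other sphere simultaneously, with no disjointness-of-supports issue and no hypotheses on them.

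A smaller correction in the non-characteristic branch: choosing, for each $j$, a class $y$ with $[S_j]\cdot y+y\cdot y$ odd is not enough. If $z_1,\ldots,z_n$ satisfy $[S_i]\cdot z_j=\delta_{ij}$ and you correct off-diagonal pairings by $\beta_j=y+z_j-\sum_k([S_k]\cdot y)z_k$, then modulo $2$ one finds $\beta_j\cdot\beta_j\equiv y\cdot y+z_j\cdot z_j+\alpha\cdot y$, where $\alpha$ is the sum of those $[S_i]$ with $z_i\cdot z_i$ odd; so the parity you need is that of $\alpha\cdot y+y\cdot y$, not of $[S_j]\cdot y+y\cdot y$. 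The fix is exactly what the hypothesis is for: $\alpha$ is represented by an ambient connected sum of the $S_i$, hence is not characteristic, so a single $y$ with $\alpha\cdot y+y\cdot y$ odd exists and serves for all the corrections; one then observes each $\beta_j$ is carried in $X-N$ and applies Casson's Embedding Theorem in $Y=X-\text{int}(N)$ as you intended.
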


\begin{proof}
We start by reducing to a more familiar setting. It is clear that $\mathcal{N}(S_1\cup\cdots \cup S_n)$ can be identified with a smoothly embedded copy of $B^4\cup (1\text{-handles})\cup T_1^{(1)}\cup \cdots\cup T_1^{(n)}$, where $T_1^{(1)}, \ldots, T_1^{(n)}$ are  $1$-stage towers attaching in place of $2$-handles in the standard handle decomposition corresponding to $G$. For each $i=1, \ldots, n$, let $D_i$ be the smoothly immersed disk in $X$ associated to $T_1^{(i)}$. Also, let $N=B^4\cup (1\text{-handles})\subseteq X$.  Then each $S_i$ is the union of $D_i$ and a smoothly embedded disk in $N$. Also, each $T_1^{(i)}$ has a collapsing set on its frontier. Let   $\tilde y_1, \ldots, \tilde y_g$ be the framed circles on $\partial \mathcal {N}(S_1\cup\cdots \cup S_n)$ that form the union of these collapsing sets. So ambiently attaching smoothly embedded, disjoint second stage Casson handles in $X-\text{int}(\mathcal{N}(S_1\cup\cdots \cup S_n))$ to $\mathcal{N}(S_1\cup\cdots \cup S_n)$ along $\tilde y_1, \ldots, \tilde y_g$ produces the necessary Casson handles $CH_1, \ldots, CH_n$, where each $CH_i$ is obtained from $T_1^{(i)}$ by attaching the second stage Casson handles to its frontier and then removing all remaining boundary except the interior of the attaching region for $T_1^{(i)}$. Hence, it suffices to locate these second stage Casson handles. 

Suppose first that some $(X, S_i)$ is a characteristic pair with $\text{Arf}(X, S_i)=0$. After reordering, we assume that $(X, S_1)$ is this characteristic pair. Again after reordering, we also assume that there is a positive integer $m$ with $\tilde y_1, \ldots, \tilde y_m$ contained in $\partial \mathcal{N}(S_1)$ and $\tilde y_{m+1}, \ldots, \tilde y_{g}$ disjoint from $\partial \mathcal{N}(S_1)$. Since $X-(S_1\cup\cdots \cup S_n)$ is simply connected, there are smoothly immersed disks in $X-\text{int}(\mathcal N (S_1\cup\cdots \cup S_n))$ bounded by $\tilde y_1, \ldots, \tilde y_g$. Our discussion about Casson's Embedding Theorem ensures these second stage Casson handles exist if each disk can be chosen so that it pushes off along the given framing to realize an even self-intersection number. We apply the  proof of Theorem \ref{mainlemma} to $(X, S_1)$ to modify the circles $\tilde y_1, \ldots, \tilde y_m$ so that they still form a collapsing set for $T_1^{(1)}$ but now each disk that is bounded by one of $\tilde y_1, \ldots, \tilde y_m$ must push-off as needed. Since $S_1$ is characteristic, $X-\text{int} (\mathcal{N}(S_1))$ admits a spin structure. So $D_2, \ldots, D_n$ are smoothly immersed, disjoint disks whose union has a simply connected complement in the smooth, spin, simply connected $4$-manifold $X-\text{int}(N\cup \mathcal{N}(S_1))$. As discussed above, the proof of Casson's Embedding Theorem  allows us to assume that $\tilde y_{m+1}, \ldots, \tilde y_g$ have been chosen so that each $\tilde y_j$ in this collection bounds a disk that pushes off as needed. Thus, we can conclude that these second stage Casson handles exist. 

Next, suppose that ambiently connected summing combinations of $S_1, \ldots, S_n$ never results in a smoothly immersed sphere that forms a characteristic pair with $X$. Let $\alpha_i=[S_i]\in H_2(X)$ for each $i=1, \ldots, n$. Since $X-(S_1\cup\cdots \cup S_n)$ is simply connected, there exists $z_1, \ldots, z_n\in H_2(X)$ with each $\alpha_i\cdot z_j=\delta_{ij}$. Let $\alpha\in H_2(X)$ be obtained by summing all $\alpha_i$ for which $z_i\cdot z_i$ is odd. Notice that $\alpha$ is represented by a smoothly immersed sphere obtained by ambiently connect summing some combination of $S_1, \ldots, S_n$. As in the first paragraph of the proof of Lemma \ref{mainlemma}, our hypothesis ensures that there exists some $y\in H_2(X)$ with $\alpha\cdot y+y\cdot y$ odd. If $z_i\cdot z_i$ is even, let $\beta_i=z_i$. Otherwise, let $\beta_i=y+z_i-\sum_{j=1}^n(\alpha_j\cdot y)z_j$. It is now easy to verify that each $\alpha_i\cdot \beta_j=\delta_{ij}$ and each $\beta_j\cdot \beta_j$ is even.  Notice that every homology class in $H_2(X)$ is carried in $X-N$. In particular, each $\beta_j$ is the image of a homology class $\beta_j'\in H_2(X-N)$ under the map induced by inclusion $X-N\hookrightarrow X$. So the smoothly immersed, disjoint disks $D_1, \ldots, D_n$ have a simply connected complement in the smooth, simply connected $4$-manifold $Y=X-\text{int}(N)$ and there are homology classes $\beta_1', \ldots, \beta_n'\in H_2(Y)$ with each $D_i\cdot \beta_j'=\delta_{ij}$ and each $\beta_j'\cdot \beta_j'$ even. As discussed above, the proof of Casson's Embedding Theorem now ensures that the necessary second stage Casson handles exist after possibly modifying the collection $\tilde y_1, \ldots, \tilde y_g$.  
\end{proof}

As an example of how this corollary can be applied, we look at spheres generating the homology of nuclei in elliptic surfaces. 
\begin{example}
Fix an even, positive integer $n$ and let $N(n)$ denote the nucleus in the elliptic surface $E(n)$.  Recall that $H_2(N(n))=\mathbb{Z}\oplus \mathbb{Z}$, generated by $\alpha_1, \alpha_2\in H_2(N(n))$ satisfying $\alpha_1^2=0, \alpha_2^2=-n$, and $\alpha_1\cdot\alpha_2=1$. There is a fishtail fiber $S_1$ in $N(n)$ and a section $S_2$ in $N(n)$ representing $\alpha_1$ and $\alpha_2$ respectively. In particular, $S_1$ is a smoothly immersed sphere with a unique double point, $S_2$ is a smoothly embedded sphere, $S_1$ and $S_2$ intersect exactly once, and $E(n)-S_1\cup S_2$ is simply connected. None of $S_1, S_2,$ or $S_1\#S_2$ form a characteristic pair with $X$ because $E(n)$ is spin and inclusion sends $H_2(N(n))$ to a direct summand of $H_2(E(n))$. For any Casson handle $CH$, define $Y_{CH}=B^4\cup CH\cup h$ with $CH$  attaching to a $0$-framed unknot in $\partial B^4$ and a standard $2$-handle $h$ attaching to a $-n$-framed meridian of this unknot. Since we've verified the necessary hypotheses, Corollary \ref{config} ensures that some $Y_{CH}$ smoothly embeds into $E(n)$ in such a way that $S_1$ and $S_2$ are obtained by capping off smoothly embedded disks in $B^4$ by the first stage disk of $CH$ and the core of $h$. In particular, $Y_{CH}$ generates the homology of $N(n)$. Additionally, there is a topologically embedded sphere in $E(n)$ representing $\alpha_1$ and intersecting the section in $N(n)$ exactly once. In the $n=2$ case, we can use the discussion from Gompf and Mrowka in \cite{GM} to locate three disjoint copies of $N(2)$ in the $K3$ surface. So we obtain a pair consisting of a fishtail fiber and a section in each of these three copies of $N(2)$. Gompf (unpublished) has also observed that the union of these three pairs has a simply connected complement in the $K3$ surface. (Each nucleus contains a second fishtail fiber, and it is routine to use this to produce the nullhomotopies of the necessary meridians.) Then Corollary \ref{config} allows us to choose Casson handles $CH_1, CH_2,$ and $CH_3$ so that each has exactly one double point in its first stage disk and the disjoint union of $Y_{CH_1}, Y_{CH_2}$, and $Y_{CH_3}$ smoothly embeds into the $K3$ surface to generate its hyperbolic summand. It follows from Furuta's 10/8 Inequality in \cite{F} that it is impossible to replace any one of $CH_1, CH_2$, or $CH_3$ with a standard $2$-handle and still find such an embedding.  So we have realized the smallest number of kinks possible in the first stage disk of each of these Casson handles. This discussion can be applied to sharpen Freedman's construction of the first large $\mathbb{R}^4$, which is described in \cite{G0}. Although it not clear at this stage what properties might be useful, it is likely that this extra control could provide $\mathbb{R}^4$'s contained in connected sums of $S^2\times S^2$'s with some additional structure. On the other hand, this corollary certainly does not apply if we instead choose $n$ to be odd. In this case, the second paragraph in the proof of Lemma \ref{mainlemma} ensures that $\alpha_1$ cannot be represented by any topologically embedded sphere in $E(n)$ because $\alpha_1$ is a characteristic element in $H_2(E(n))$ and $\sigma(E(n))-\alpha_1^2=-8n$ is not divisible by $16$.
\end{example}

Given a configuration of smoothly immersed spheres, it is generally difficult to determine when it is not possible to find Casson handles that admit an embedding as in the previous corollary. There are some obvious examples where these Casson handles cannot exist because taking an ambient connected sum of spheres in the configuration would lead to a contradiction of Lemma \ref{mainlemma}. However, there are also examples where such a contradiction does not arise but the necessary Casson handles still fail to exist. This is illustrated in our final example of this section.  
\begin{example}
Let $Y_1$ and $Y_2$ each be a copy of $\mathbb{C}P^2\#8\overline{\mathbb{C}P^2}$ and let $X=Y_1\#Y_2$. Also let \linebreak $f_1, e_1, \ldots, e_8, f_2, e_9, \ldots, e_{16}$ be the standard basis for $H_2(X)=H_2(Y_1)\oplus H_2(Y_2)$. Then there are smoothly immersed spheres $S_1$ and $S_2$ in $X$ with $S_1$ representing $3f_1+e_1+\cdots +e_8$ and $S_2$ representing\linebreak $3f_2+e_9+\cdots +e_{16}$. These can be chosen to each have exactly one double point and so that each $S_i$ is contained in the $Y_i$ summand with $Y_i-S_i$ simply connected. Notice that $X-(S_1\cup S_2)$ is also simply connected. Let $F_1$, $F_2$, and $F$ be the surfaces  obtained by resolving the double points of $S_1, S_2$, and $S_1\# S_2$, respectively. Since $(Y_1, F_1)$, $(Y_2, F_2)$, and $(Y, F)$ are all characteristic pairs, the corresponding quadratic forms $\tilde{q}_1:H_1(F_1, \mathbb{Z}_2)\to \mathbb{Z}_2$, $\tilde{q}_2:H_1(F_2, \mathbb{Z}_2)\to \mathbb{Z}_2$, and $\tilde{q}:H_1(F, \mathbb{Z}_2)\to \mathbb{Z}_2$ from Corollary 1\linebreak of \cite{FK} are each well-defined. Theorem 1 of \cite{FK} guarantees that $\text{Arf}(\tilde{q}_1)=\text{Arf}(\tilde{q}_2)=\frac{1}{8}(1-(-7))=1$. After observing that $\tilde{q}=\tilde{q}_1\oplus\tilde{q}_2$,  reversing the argument used in the proof of Lemma \ref{mainlemma} ensures that there are not Casson handles $CH_1$ and $CH_2$ corresponding to this configuration as in Corollary \ref{config}.\linebreak However, $S_1$ and $S_2$ are smoothly immersed spheres in $X$ whose union has a simply connected\linebreak complement, neither $S_i$ is characteristic, and $(X, S_1\#S_2)$ is a characteristic pair with vanishing Arf invariant. 
\end{example}

\section{Large $\mathbb{R}^4$'s in Stein surfaces}

We now define a family of exotic $\mathbb{R}^4$'s that realize arbitrarily large (finite) values of the Taylor invariant and each admit a smooth, orientation-preserving embedding into a compact Stein surface. To provide contrast, we also present a family of exotic $\mathbb{R}^4$'s that fail to embed into any Stein surface and investigate the implications of the existence of these two families. 

We begin with an introduction to the Taylor invariant, first defined by Taylor in \cite{Tay}. Suppose that $X$ is a smooth, orientable, open $4$-manifold. Let $\mathcal{E}(X)$ denote the set of flat, topological embeddings $e:B^4\hookrightarrow X$ that restrict to a smooth embedding around some point $p\in \partial B^4$. For each $e\in\mathcal{E}(X)$, fix $\mathcal{S}{p}(e)$ to be the collection of smooth, closed, spin $4$-manifolds with hyperbolic intersection form that contain a smoothly embedded copy of the interior of $e(B^4)$. Roughly speaking, the Taylor invariant is computed by choosing the smallest second betti number appearing in $\mathcal{S}p(e)$ for each $e\in\mathcal{E}(X)$, diving each in half, and then taking the supremum of the resulting integers. However, it turns out that this definition fails to provide a meaningful invariant when $X$ is not spin. In order to address this issue, recall (e.g. see \cite{L}) that $X$ has associated homology groups $H_n^{lf}(X, \mathbb{Z}_2)$ and cohomology groups $H_c^n(X, \mathbb{Z}_2)$ .  These groups satisfy the duality relations $H_n^{lf}(X, \mathbb{Z}_2)\cong H^{4-n}(X, \mathbb{Z}_2)$ and $H^n_c(X, \mathbb{Z}_2)\cong H_{4-n}(X, \mathbb{Z}_2)$. Given an element in $H^2(X, \mathbb{Z}_2)$, we will say that its dual is the corresponding element in $H_2^{lf}(X, \mathbb{Z}_2)$ from the first duality isomorphism. An element in $H^2(X, \mathbb{Z}_2)$ is said to be \textit{compactly supported} if its dual is represented by a smoothly embedded, closed surface in $X$.  Since the standard maps $H_n(X, \mathbb{Z}_2)\to H_n^{lf}(X, \mathbb{Z}_2)$ and $H_c^n(X, \mathbb{Z}_2)\to H^n(X, \mathbb{Z}_2)$ commute with these duality isomorphisms in the obvious way, an element of $H^2(X, \mathbb{Z}_2)$ is compactly supported precisely when the original cohomology class pulls back to an element in $H_c^2(X, \mathbb{Z}_2)$. (This whole discussion also holds with coefficients in $\mathbb{Z}$ or any field.) If $F$ is a smoothly embedded, closed surface in $X$ that represents the dual to $w_2(X)\in H^2(X, \mathbb{Z}_2)$, then $X-F$ is spin. We can now give a complete, precise definition of the Taylor invariant. 
 \begin{definition}Suppose that $X$ is a smooth, orientable $4$-manifold. If $X$ is spin, then the Taylor invariant of $X$ is defined as 
\[\gamma(X)=\sup_{e\in\mathcal{E}(X)}\left\{\min_{N\in\mathcal{S}p(e)}\left\{\frac{1}{2}\beta_2(N)\right\}\right\}.\]
If $X$ is not spin but $w_2(X)$ is compactly supported, then the Taylor invariant of $X$, still denoted $\gamma(X)$, is the supremum of $\gamma(X-F)-\dim_{\mathbb{Z}_2}H_1(F, \mathbb{Z}_2)$ with $F$ ranging over all smoothly embedded, closed surfaces in $X$ that represent the dual to $w_2(X)$. If $w_2(X)$ is not compactly supported, then the Taylor invariant of $X$ is defined to be $\gamma(X)=-\infty$.  
\end{definition}
\noindent It follows from this definition that the Taylor invariant takes values in $\mathbb{Z}\cup\{\pm\infty\}$. It is easy to verify that $\mathcal{S}p(e)$ is non-empty for any $e\in \mathcal{E}(X)$ when $X$ is spin, ensuring that this invariant is indeed well-defined. Notice that it does not depend on a choice of orientation. 

All of the $\mathbb{R}^4$'s presented in this section are large because they have non-zero Taylor invariant. We remark that it is still unknown if there exists a large $\mathbb{R}^4$ admitting a Stein structure or a small $\mathbb{R}^4$ not admitting a Stein structure. Remark 4.5 of \cite{Tay} ensures that all Stein $\mathbb{R}^4$'s have vanishing Taylor invariant, as any $\mathbb{R}^4$ with non-vanishing Taylor invariant has infinitely many $3$-handles in any handle decomposition. So the former would provide the first example of a large $\mathbb{R}^4$ with vanishing Taylor invariant. In particular, our examples do not admit Stein structures. As mentioned above, small exotic $\mathbb{R}^4$'s admitting Stein structures are produced by Gompf in \cite{G2} and \cite{G4}.

An important tool for building open $4$-manifolds is an operation called end-summing. This operation is essentially the non-compact analogue to boundary summing. We present the definition of end-summing that is given by Gompf in  \cite{G1}, but note that there are multiple equivalent definitions appearing in the literature. 
\begin{definition}
Suppose that $X_1$ and $X_2$ are smooth, oriented, non-compact $4$-manifolds. Let \linebreak $\gamma_1:[0, \infty)\to X_1$ and $\gamma_2:[0, \infty)\to X_2$ be smooth, properly embedded rays with tubular neighborhoods $\nu_1$ and $\nu_2$, respectively. The $\textit{end-sum}$ of $X_1$ and $X_2$ along $\gamma_1$ and $\gamma_2$ is defined as 
\[X_1\cup_{\varphi_1} I\times \mathbb{R}^3\cup_{\varphi_2} X_2, \]
where $\varphi_1:[0, \frac{1}{2})\times \mathbb{R}^3\to \nu_1$ and $\varphi_2:(\frac{1}{2}, 1]\times \mathbb{R}^3\to \nu_2$ are orientation-preserving diffeomorphisms that respect the $\mathbb{R}^3$-bundle structures. 
\end{definition}
\noindent The resulting diffeomorphism type generally depends on the rays $\gamma_1$ and $\gamma_2$, but does not depend on any other choices made in the definition. Notice that it is important to fix an orientation on both $4$-manifolds before end-summing, or else the operation is not well-defined. When end-summing with open Stein surfaces, we will always assume that their complex orientations have been fixed. It follows from \cite{CG} that end-summing Stein surfaces together produces another Stein surface.   

End-summing was originally defined only for $\mathbb{R}^4$'s, and its behavior is simpler in this case. End-summing a smooth, oriented, open $4$-manifold $X$ with some (possibly exotic) oriented $\mathbb{R}^4$ produces a smooth, oriented $4$-manifold $X'$ that is homeomorphic (but possibly not diffeomorphic) to $X$. Furthermore, this homeomorphism $X'\to X$ can be chosen so that it restricts to an embedding on $X$ that is isotopic to the identity. Using any homeomorphism meeting this description, we can uniquely (up to isotopy) transport the smooth structure of $X'$ to a smooth structure on the underlying topological $4$-manifold of $X$. In summary, end-summing a smooth, oriented, open $4$-manifold with an oriented $\mathbb{R}^4$ defines a new smooth structure on the same underlying topological $4$-manifold. The resulting smooth $4$-manifold is oriented by the orientation on this underlying topological $4$-manifold, and this agrees with the orientation induced by $X'$. It follows from the appendix of \cite{G1} that the isotopy class of the smooth structure produced by this operation does not depend on the choice of ray used in the $\mathbb{R}^4$.   Similarly, simultaneously end-summing $X$ with multiple $\mathbb{R}^4$'s uniquely defines an isotopy class that only depends on the choice of rays in $X$. A slightly stronger statement from that appendix  ensures the diffeomorphism type of an oriented $\mathbb{R}^4$, which we will denote by either  $\natural_{i=1}^n R_i$ or $R_1\natural\cdots\natural R_n$, that results from simultaneously end-summing the standard $\mathbb{R}^4$ with each oriented (possibly exotic) $\mathbb{R}^4$ from some (possibly infinite) collection $\{R_i\}_{i=1}^n$ is independent of any choice of rays. When $n$ is finite, $\natural_{i=1}^n R_i$ is diffeomorphic to the result of inductively end-summing $R_i$  with $R_{i-1}$ for each $1<i\leq n$. We will often use $\natural_n R$ to denote $\natural_{i=1}^n R_i$ when each $R_i$ is a copy of $R$.

In order to end-sum with exotic $\mathbb{R}^4$'s in ambient $4$-manifolds, we will require the definition of shaved embeddings from \cite{G1}:
\begin{definition}
Let $R$ be an $\mathbb{R}^4$ and let $X$ be any smooth $4$-manifold. We say that a smooth embedding $i:R\hookrightarrow X$ is shaved if $i(R)=\text{int}(B)$ for some flat, topological $4$-ball $B$ in $X$ with a (non-empty) subset $U\subseteq \partial B$ that is a smooth, codimension-$1$ submanifold of $X$. Given orientations on $R$ and $X$, we say that $R$ is a \textit{shaved $\mathbb{R}^4$ in $X$} if $R\subseteq X$ and inclusion defines a smooth, shaved, orientation-preserving embedding of $R$ into $X$.
\end{definition}
\noindent If $R_1$ and $R_2$ are shaved $\mathbb{R}^4$'s in smooth, connected, oriented, open  $4$-manifolds $X_1$ and $X_2$ respectively, then it easy to verify that $R_1\natural R_2$ admits a smooth, shaved, orientation-preserving embedding into any end-sum of $X_1$ and $X_2$. Similarly, $R_1\natural R_2$ admits a smooth, shaved, orientation-preserving embedding into $X_1\#X_2$. (For example, these claims follow from the solution to Exercise 9.4.8(a) in \cite{GS}). The same discussion holds for finite iterated end-sums. These observations also extend in the obvious way to the case of infinite end-sums, provided they are all performed simultaneously as described above.  
 
We are prepared to define the family of exotic $\mathbb{R}^4$'s described earlier, providing some additional structure as well. The usefulness of this extra structure will be illustrated in the next section, but it is worth noting that either of the final two properties below ensures each of these $\mathbb{R}^4$'s contains an uncountable family of distinct $\mathbb{R}^4$'s sharing its Taylor invariant. Our construction is closely related to Example 5.10 of \cite{Tay}, which finds $\mathbb{R}^4$'s that satisfy properties (\ref{inv}), (\ref{end}), and (\ref{neg}) below but may not satisfy property (\ref{embeddings}). In particular, that example provides no information about the relationship between the resulting $\mathbb{R}^4$'s and Stein surfaces. 
\begin{theorem}
\label{inStein} 
There are exotic $\mathbb{R}^4$'s realizing arbitrarily large (finite) values of the Taylor invariant that each admit a smooth embedding into a compact Stein surface. More specifically, there is a smooth, oriented $4$-manifold $L$ homeomorphic to $\mathbb{R}^4$ that satisfies the following properties: \begin{enumerate}
\item  \label{inv} $\gamma(L)$ is an arbitrarily large integer, each $\gamma(\natural_nL)<\infty$ for $n\in\mathbb{Z}^{> 0}$, and $\lim_{n\to\infty}\gamma(\natural_n L)=\infty$, 
\item \label{embeddings} there is a compact, connected Stein surface $S$ such that each $\natural_n{L}$ for $n\in\mathbb{Z}^{> 0}\cup\{\infty\}$ admits a smooth, shaved, orientation-preserving embedding into an open Stein surface $S_n$ obtained simultaneous end-summing the standard $\mathbb{R}^4$ with $n$ copies of the interior of $S$, 
\item \label{end}there is a smooth, oriented, open, simply connected $4$-manifold $Z_n$  for each $n\in\mathbb{Z}^{>0}$ with an even, positive definite, unimodular intersection form such that $Z_n -\text{int} (Y_n) $ is orientation-preserving diffeomorphic to $\natural_n L - \text{int} (K_n) $ for some smooth, compact, codimension-$0$ submanifolds $Y_n \subseteq Z_n $ and $K_n \subseteq \natural_n L $, and
\item \label{neg} each $\natural_n{L}$ for $n\in\mathbb{Z}^{> 0}$ admits a smooth, shaved, orientation-preserving embedding into $\#n\overline{\mathbb{C}P^2}$.
\end{enumerate}
\end{theorem}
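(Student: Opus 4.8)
The plan is to build $L$ (together with its iterated end-sums) as an open subset of a closed smooth $4$-manifold by generalizing the Example following Lemma \ref{mainlemma}, and then to extract each of the four properties from the several models of $L$ that this construction produces. First I would fix a target integer $m$ and, imitating the Example, choose a closed, oriented, simply connected $4$-manifold $X$ with a smoothly immersed sphere $S$ that represents a characteristic class $\alpha$, has $X-S$ simply connected, and has $\operatorname{Arf}(X,S)=0$, where the signature of $X$ and the square $\alpha\cdot\alpha$ are arranged (by taking enough $\mathbb{C}P^2$ and $\overline{\mathbb{C}P^2}$ summands, with a controlled ``$E_8^{\oplus k}$-type'' piece built into the form, $k$ growing with $m$) and where the double points of $S$ are prescribed to all have a fixed sign. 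Lemma \ref{mainlemma} then yields a smoothly embedded $B^4\cup CH\subseteq X$ whose topological generator represents $\alpha$ and whose first-stage disk has exactly the prescribed double-point configuration; the point of Section 2 is that this configuration survives, whereas ordinary forms of Casson's Embedding Theorem would homotope it away. The $\mathbb{R}^4$ $L$ is then the large exotic $\mathbb{R}^4$ associated to this embedded Casson handle by the standard ``Casson handle in a closed manifold'' recipe (as in \cite{G0}, \cite{BE}, \cite{Tay}, \cite{G1b}), and for the end-sum statements I would run the same argument for $n$ disjoint spheres of this type in a connected sum of copies of $X$, invoking Corollary \ref{config} (after checking that no ambient connect-sum of the chosen spheres is characteristic, or that one of them is characteristic with vanishing Arf) to produce $n$ disjoint Casson handles at once; the usual identification then shows the resulting $\mathbb{R}^4$ is diffeomorphic to $\natural_nL$.

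Next I would dispatch properties (\ref{neg}) and (\ref{end}). Since every Casson handle embeds in the standard open $2$-handle by an orientation-preserving embedding preserving attaching regions, $B^4\cup CH$ embeds in $B^4\cup(\text{standard }2\text{-handle})$, and for the relevant attaching framing the latter is the punctured total space of a disk bundle over $S^2$ realizing the $\overline{\mathbb{C}P^2}$-summand of $X$; composing with the obvious inclusion gives a shaved, orientation-preserving embedding $L\hookrightarrow\overline{\mathbb{C}P^2}$, and the boundary-connect-sum/shaved-embedding remarks in this section upgrade this to $\natural_nL\hookrightarrow\#n\,\overline{\mathbb{C}P^2}$, which is (\ref{neg}). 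For (\ref{end}), the closed manifold $X$ (resp. its $n$-fold analogue) is chosen so that the complement of a compact neighborhood of $B^4\cup CH$ is orientation-preserving diffeomorphic to the complement of a compact set in a \emph{different} completion of the same end, namely a smooth structure -- supplied by Quinn's theorem that open $4$-manifolds are smoothable -- on the punctured Freedman manifold $|E_8^{\oplus k}|-\{\mathrm{pt}\}$; this is exactly the open, simply connected $Z_n$ with even, positive definite, unimodular form of (\ref{end}), together with the compact $Y_n\subseteq Z_n$ carrying its second homology and the compact $K_n\subseteq\natural_nL$. The work here is the bookkeeping needed to see that $X$ and $Z_n$ genuinely share this end (a diffeomorphism of ends, not merely a homeomorphism).

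Property (\ref{inv}) then follows from (\ref{end}) by Taylor's argument. For the lower bound on $\gamma(\natural_nL)$, take any $e\in\mathcal E(\natural_nL)$ whose flat ball contains a smooth copy of $K_n$; given a closed smooth spin $4$-manifold $N$ with hyperbolic intersection form containing the interior of this ball, cut out $K_n$ and glue in $Y_n$ along the common boundary $\partial K_n\cong\partial Y_n$ to obtain a closed smooth spin $4$-manifold whose signature is that of $Y_n$ and whose second Betti number exceeds $\beta_2(N)$ by at most $\operatorname{rk} Z_n$; Furuta's $10/8$ inequality \cite{F} then forces $\tfrac12\beta_2(N)\ge \tfrac12\operatorname{rk} Z_n+1$, so $\gamma(\natural_nL)\ge\tfrac12\operatorname{rk}Z_n+1$, which is large already for $n=1$ by the choice of $m$ and which tends to $\infty$ with $n$ because $\operatorname{rk}Z_n$ does. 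Finiteness of each $\gamma(\natural_nL)$ -- and hence the claimed limit -- comes from the embeddings into the fixed-topology models of the previous paragraph (after deleting a characteristic surface, $\natural_nL$ sits in a closed spin manifold of bounded size), which bounds the Betti-number minimum defining $\gamma$.

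The main obstacle is property (\ref{embeddings}). Here I would construct the compact, connected Stein surface $S$ by attaching, via Gompf's Stein handle calculus for Casson handles from \cite{G5}, a suitable finite Casson tower to a subcritical Stein handlebody along a Legendrian knot, and then arrange that $\operatorname{int}(S)$ contains the configuration $B^4\cup CH$ used to build $L$ by exploiting the embedding of $CH$ into the standard open $2$-handle. Making this work is precisely what Section 2 is for: Gompf's construction requires the first-stage kinky handle to sit in Legendrian position with Thurston--Bennequin framing matching the attaching framing, i.e. it constrains the number and signs of the double points of the first-stage disk, while the same Casson handle must simultaneously carry the characteristic data (with $\operatorname{Arf}=0$) feeding the Furuta bound in (\ref{inv})/(\ref{end}) and the Euler-number data feeding (\ref{neg}). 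Reconciling the negativity forced on the effective attaching framing by the Legendrian condition with the characteristic condition on $(X,S)$ is the delicate balancing act, and it is exactly why the controlled embedding result of Section 2, rather than Casson's original statement, is needed. Once $S$ is in hand, $L\hookrightarrow\operatorname{int}(S)$ is shaved by construction, \cite{CG} makes the iterated end-sum $S_n$ Stein, and the shaved-embedding behavior of (infinite, simultaneous) end-sums recorded earlier in this section gives $\natural_nL\hookrightarrow S_n$ for all $n\in\mathbb Z^{>0}\cup\{\infty\}$.
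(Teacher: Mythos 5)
Your overall skeleton (a characteristic sphere of square $-1$ with all-negative double points in $\overline{\mathbb{C}P^2}\#16s\,\mathbb{C}P^2$, Lemma \ref{mainlemma} to keep the first stage, the cut-and-paste recipe to define $L$, Furuta for the lower bound, shaved embeddings for the end-sums) matches the paper, but the two delicate properties are where your plan breaks. The serious gap is property (\ref{embeddings}). You propose to build the compact Stein surface $S$ as a Stein thickening of a finite Casson tower containing $B^4\cup CH$, and then claim $L\hookrightarrow\operatorname{int}(S)$ ``by construction.'' But under the cut-and-paste recipe you yourself invoke, $L$ is not contained in any neighborhood of $B^4\cup CH$: it is $\overline{\mathbb{C}P^2}-i(C)$, so it contains essentially all of $\overline{\mathbb{C}P^2}$ outside a compact core, in particular the complement of the standard $2$-handle neighborhood. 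What is actually needed is that the \emph{complement} of the first-stage piece is Stein: the paper takes $A$ a copy of $B^4\cup T_1$ pushed into the interior, arranges $A\subseteq\operatorname{int}(C)$, and proves by explicit relative Kirby calculus (turn the relative handlebody upside down and blow down the $\left<1\right>$-framed handle, Figures 6--9) that $\overline{\mathbb{C}P^2}-i(A)$ is the interior of a compact Stein surface. The all-negative-double-point condition is used precisely there: after the blow-down the $g$ complementary $2$-handles sit on a $0$-framed Legendrian link each component of which has Thurston--Bennequin number $1$, so Gompf's criterion applies. Your ``delicate balancing act'' paragraph defers exactly this step without supplying the idea, and a Stein neighborhood of the tower, even if you could build one with negative kinks, would not contain $L$ at all.

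Property (\ref{end}) as you set it up also fails: you take $Z_n$ to be a Quinn smoothing of $|E_8^{\oplus k}|-\{\mathrm{pt}\}$ and defer the orientation-preserving diffeomorphism of ends as ``bookkeeping,'' but an arbitrary Quinn smoothing gives no control whatsoever over its end, and that diffeomorphism is the entire content of (\ref{end}). The correct (and essentially free) move is to let $Z=X-C$ inherit its smoothness as an open subset of the smooth closed $X=\overline{\mathbb{C}P^2}\#16s\,\mathbb{C}P^2$; then $Z-\operatorname{int}(Y)$ and $L-\operatorname{int}(K)$ are literally the same piece $C'-C$ transported by $i$, and $Q_Z=\left<\alpha\right>^{\perp}$ is even, positive definite and unimodular because $\alpha$ is characteristic of square $-1$ (note you cannot ``build an $E_8^{\oplus k}$-type piece into the form'' of a connected sum of $\pm\mathbb{C}P^2$'s -- the even form only appears as this orthogonal complement). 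Smaller issues: Furuta gives $\tfrac12\beta_2(N)\ge\tfrac18\operatorname{rk}Z_n+1$, not $\tfrac12\operatorname{rk}Z_n+1$ (harmless for the conclusion); finiteness of $\gamma(\natural_nL)$ needs a closed spin manifold with hyperbolic form containing all of $\natural_nL$, and ``deleting a characteristic surface'' from $\#n\overline{\mathbb{C}P^2}$ only produces an open spin manifold -- the paper gets the bound because $S$ has only $0$-framed $2$-handles, so its double is $\#gS^2\times S^2$; and your plan to rerun Corollary \ref{config} on $n$ spheres in $\#nX$ and identify the output with $\natural_nL$ is both unjustified and unnecessary, since the shaved-embedding/end-sum formalism of Section 3 handles all $n\in\mathbb{Z}^{>0}\cup\{\infty\}$ directly from the single $L$.
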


\begin{proof}
We construct $L$ using a standard cut-and-paste argument, but apply Lemma \ref{mainlemma} to maintain control over the first stage of the Casson handle located during this construction. If $d$ is any integer with $d>1$ and $d\equiv\pm 1 (\text{mod } 8)$, then there exists a positive integer $s$ with $d^2-1=16s$. We start by fixing any pair $(d, s)$ of positive integers satisfying this equality. Let $X=\overline{\mathbb{C}P^2}\#16s{\mathbb{C}P^2}$ and let $e_0, e_1, \ldots, e_{16s}$ be the standard basis for $H_2(X)$. It is easy to construct a smoothly immersed sphere $S$ in $X$ representing the element $\alpha=de_0+e_1+\cdots +e_{16s}\in H_2(X)$ that has $g=\frac{1}{2}(d-1)(d-2)$ negative double points and no positive double points. This can be chosen so that there is  a smoothly embedded sphere in $X$ intersecting $S$ exactly once, ensuring that $X-S$ is simply connected. Observe that $(X, S)$ is a characteristic pair with $\text{Arf}(X, S)=0$ because 
\[\frac{1}{8}(S\cdot S-\sigma(X))=\frac{1}{8}((-d^2+16s)-(-1+16s))=-\frac{1}{8}(d^2-1)=-2s.\]
So $S$ satisfies the hypotheses of Lemma \ref{mainlemma} with $S\cdot S=-d^2+16s=-1$. Thus, there is a neighborhood $W\subseteq X$ of $S$ that is equal to the the interior of the smoothly embedded copy of $B^4\cup CH$, where $CH$ is a Casson handle attaching to a $-1$-framed unknot in $\partial B^4$ whose first stage disk has has $g$ negative double points and no positive double points. Let $T_1$ denote the $1$-stage tower of $CH$. Notice that $W$ is homeomorphic to $\overline{\mathbb{C}P^2}-\{\text{pt}\}$. In particular, there is a flat, proper, topological embedding $j: S^3\times[0,\infty)\hookrightarrow W$. We call on Quinn's Stable Homeomorphism Theorem (Theorem 8.1A of \cite{FQ}) to arrange for $j$ to be smooth on a neighborhood of $p\times (0,\infty)\subseteq S^3\times[0,\infty)$ for some point $p\in S^3$. Define $C$ to be the subset of $W$ obtained by removing $j(S^3\times(a, \infty))$ for some $a\in\mathbb{R}^{>0}$, and then choose $C'$ to be any smooth, compact, codimension-$0$ submanifold of $W$ containing $C$. Next, recall that the standard $2$-handle can be written as $T_1\cup (2\text{-handles})$ with these standard $2$-handles attaching to any collapsing set on the frontier of $T_1$. So there is a smooth, orientation-preserving embedding of $CH$ into the standard $2$-handle that sends the interior of $T_1$ in $CH$ to the interior of $T_1$ in this description. Using the standard handle decomposition for $\overline{\mathbb{C}P^2}$, this extends to a smooth, orientation-preserving embedding $i: W\hookrightarrow \overline{\mathbb{C}P^2}$ that induces an isomorphism on the level of second homology.  As illustrated in Figure 5, we can now define $(L, K)= (\overline{\mathbb{C}P^2}-i(C),  \overline{\mathbb{C}P^2}-\text{int}(i(C')))$. Notice that $L$ and $K$ each inherit both a smooth structure and an orientation from $\overline{\mathbb{C}P^2}$. It easily follows from the Mayer-Vietoris Sequence and Seifert-van Kampen Theorem that $H_2(L)=0$  and $\pi_1(L)=1$. By construction, $L$ is simply connected at infinity because its unique end is homeomorphic to $S^3\times \mathbb{R}$. Hence, Freedman's criterion from Corollary 1.2 in \cite{F} ensures that $L$ is indeed homeomorphic to $\mathbb{R}^4$. It is clear from our construction that $L$ is a shaved $\mathbb{R}^4$ in $\overline{\mathbb{C}P^2}$.

\begin{figure}[t]
  \hspace*{\fill}
  \subcaptionbox*{}{\includegraphics[scale=.75]{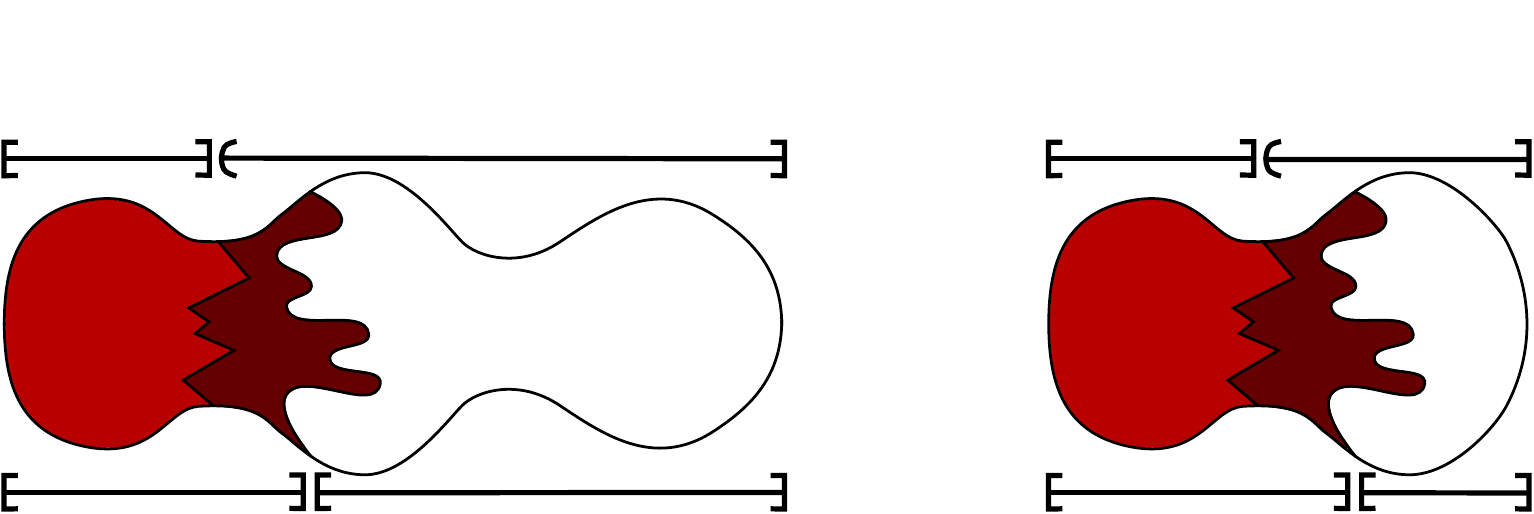}}\hfill
  \hspace*{\fill}
  \vspace{-1em}
  \caption{Constructing a large $\mathbb{R}^4$ in $\overline{\mathbb{C}P^2}$.}
  \begin{picture}(0,0)
    \put(-15,97){$X$}
    \put(148,97){$\overline{\mathbb{C}P^2}$}
    \put(-162,120){$C$}
    \put(-78,120){$Z$}
    \put(60,120){$i(C)$}
    \put(118,120){$L$}
    \put(-155,36){$C'$}
    \put(-65,36){$Y$}
    \put(68,36){$i(C')$}
    \put(127,36){$K$}
    \end{picture}
\end{figure}
 
We will work backwards to verify that properties (1)-(4) are satisfied. Since $L$ is a shaved $\mathbb{R}^4$ in $\overline{\mathbb{C}P^2}$, it is immediate from the discussion after our definition of end-summing that property (\ref{neg}) holds.  Next, we consider property (3). Let $(Z, Y) = (X-C, X-\text{int}(C'))$, so that $Z$ and $Y$ each inherit both a smooth structure and orientation from $X$. Applying the Mayer-Vietoris Sequence and Seifert-van Kampen Theorem again, we find that $H_2(Z)=\left<\alpha\right>^\perp$ and $\pi_1(Z)=1$. Since $\alpha$ is characteristic with $\alpha\cdot\alpha=-1$,  we can conclude that $Q_{Z}$ is even, positive definite, and unimodular. Observe that $L-\text{int}(K)$ is orientation-preserving diffeomorphic to $Z-\text{int}(Y)$ because $L-\text{int}(K)=i(C'-C)$ and $Z-\text{int}(Y)=C'-C$. This is illustrated in Figure 5. Now set $Z_n$ for each $n\in\mathbb{Z}^{> 0}$ equal to the result of simultaneously end-summing the standard $\mathbb{R}^4$ with $n$ copies of $Z$ (using an arbitrary choice of rays). For each $n\in\mathbb{Z}^{> 0}$, let $Y_n\subseteq Z_n$ (resp. $K_n\subseteq \natural_n L$) be the result of ambiently boundary summing together the copies of $Y$ (resp. $K$) coming from each copy of $Z$ (resp. $L$) used in the construction of $Z_n$ (resp. $\natural_n L$). It is clear that these ambient boundary sums can be performed so that each $Y_n\subseteq Z_n$ and $K_n\subseteq \natural_n L$  are the necessary pairs to ensure that property (\ref{end}) holds. Note also that each $Q_{Z_n}$ has rank $16sn$. 

We turn to property (\ref{embeddings}). This is the step that requires the first stage disk of $CH$ to have only negative double points, as we arranged for in the first paragraph. Notice that $W$ is the interior of a smoothly embedded $B^4\cup T_1\cup CH_1\cup\cdots\cup CH_g\subseteq X$, where $T_1$ is attaching to a $-1$-framed unknot in $\partial B^4$ and each $CH_i$ is a second stage Casson handle attaching to the frontier of $T_1$. Let $A\subseteq W$ be the result of smoothly isotoping $B^4\cup T_1$ into its own interior using the collared neighborhood of its boundary. Since $A$ is compact, we can arrange for $A$ to be contained in the interior of $C\subseteq W$ by choosing  $a\in\mathbb{R}^{>0}$ sufficiently large during the construction of $C$. This ensures that $L$ is a shaved $\mathbb{R}^4$ in $\overline{\mathbb{C}P^2}-i(A)$. We will now use relative Kirby calculus to see that $\overline{\mathbb{C}P^2}-i(A)$ is the interior of a compact Stein surface. Observe that Figure 6 provides a handle decomposition for $\overline{\mathbb{C}P^2}$ with $i(A)$ equal to the union of the $0$-handle, all $1$-handles, and the single $-1$-framed $2$-handle, so that $\overline{\mathbb{C}P^2}-i(A)$ is the interior of the relative handlebody $S$ described by  Figure 7.  Reversing the orientation on this relative handlebody, removing the interior of its $4$-handle, and then turning the result upside down (reversing its orientation again) produces a description of $S-(\text{open }4\text{-ball})$ as the relative handlebody in Figure 8.\linebreak We fill the new $S^3$ boundary with $B^4$ (in the only way possible) simply by blowing down the $4$-manifold described by bracketed handles along the $\left<1\right>$-framed $2$-handle in the final diagram of Figure 8. This results in a description of $S$ as the handlebody shown in Figure 9. Since the $0$-framed link in the final diagram of Figure 9 is Legendrian in $(S^3, \xi_\text{std})$ and each component has Thurston-Bennequin number equal to $1$, the criterion in \cite{G2} ensures that $S$ is indeed a compact Stein surface. Hence, $\overline{\mathbb{C}P^2}-i(A)$ is an open Stein surface that is the interior of the compact Stein surface $S$. Using our discussion after the definition of end-summing, we can now conclude that property (\ref{embeddings}) holds as well.  

\begin{figure}[t]
  \centering
  \begin{minipage}{.5\textwidth}
    \centering
    \includegraphics[scale=.7]{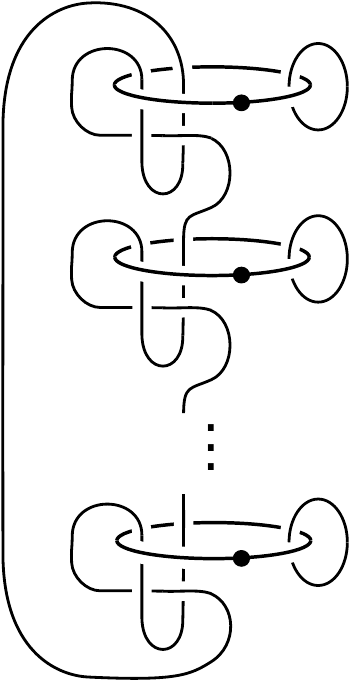}
    \caption{$\overline{\mathbb{C}P^2}$}
    \begin{picture}(0,0)
    \put(-44,165){$-1$}
    \put(34,159){$0$}
    \put(34,124){$0$}
    \put(34,66){$0$}
    \put(30,85){$\cup \;4$-handle}
    \end{picture}
  \end{minipage}%
  \begin{minipage}{.5\textwidth}
    \centering
    \includegraphics[scale=.7]{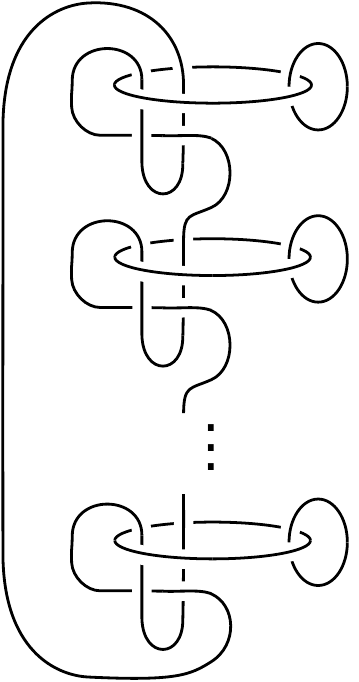}
    \caption{$S=\overline{\mathbb{C}P^2}-\text{int}(i(A))$}
    \begin{picture}(0,0)
    \put(-52,165){$\left<-1\right>$}
    \put(34,159){$0$}
    \put(34,124){$0$}
    \put(34,66){$0$}
    \put(30,85){$\cup \;4$-handle}
    \put(9,159.5){$\left<0\right>$}
    \put(9,125.5){$\left<0\right>$}
    \put(9,67.5){$\left<0\right>$}
    \end{picture}
  \end{minipage}
\end{figure}

\begin{figure}[t]
    \centering
    \vspace{.7cm}
    \includegraphics[scale=.7]{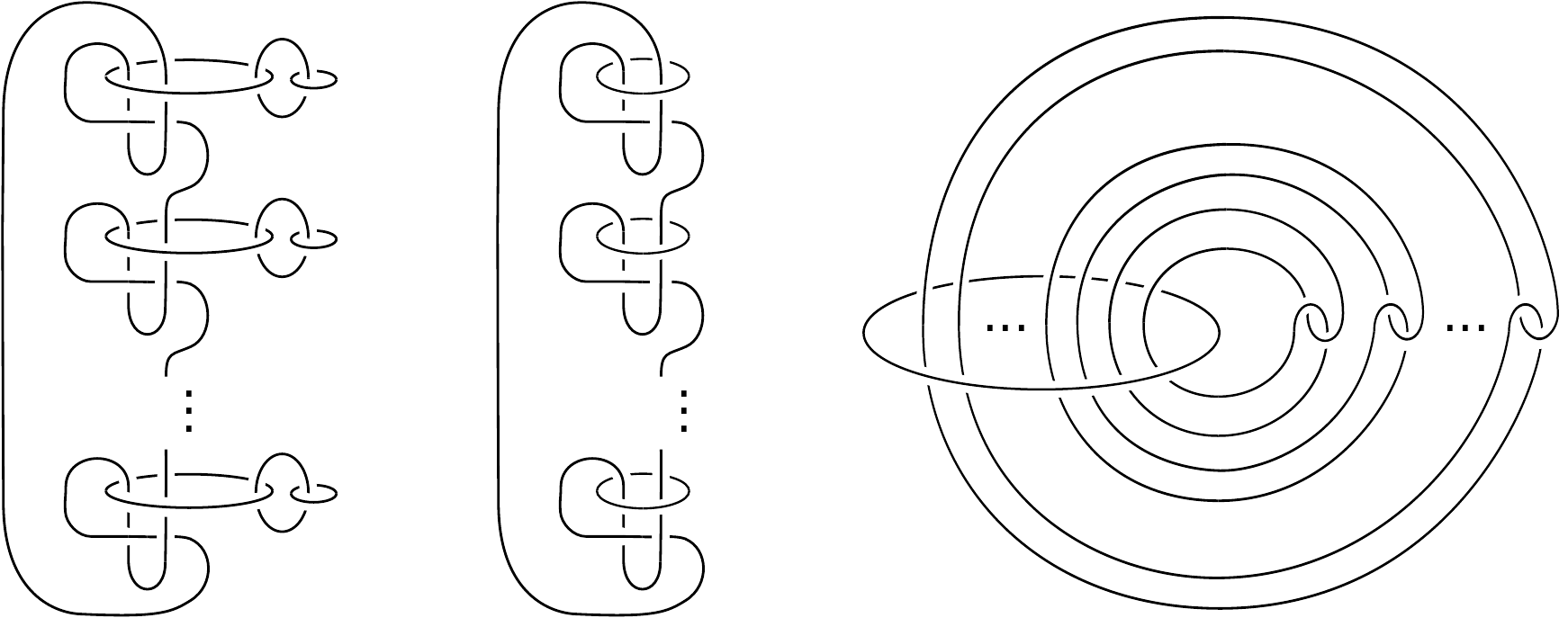}
    \caption{$S-$(open $4$-ball)}
    \begin{picture}(0,0)
    \put(-184,166){$\left<1\right>$}
    \put(-118,165){$\left<0\right>$}
    \put(-118,129){$\left<0\right>$}
    \put(-118,71){$\left<0\right>$}
    \put(-99,151){$0$}
    \put(-99,115){$0$}
    \put(-99,58){$0$}
    \put(-132,160){$\left<0\right>$}
    \put(-132,124){$\left<0\right>$}
    \put(-132,67.5){$\left<0\right>$}
    \put(-91, 90){\huge{$\sim$}}
    \put(-6, 90){\huge{$\sim$}}
    \put(-72,166){$\left<1\right>$}
    \put(-19,151){$0$}
    \put(-19,115){$0$}
    \put(-19,58){$0$}
    \put(14,108){$\left<1\right>$}
    \put(108,102){$0$}
    \put(134,126){$0$}
    \put(157,144){$0$}
    \end{picture}
\end{figure}

\begin{figure}[t]
    \centering
    \vspace{.7cm}
    \includegraphics[scale=.7]{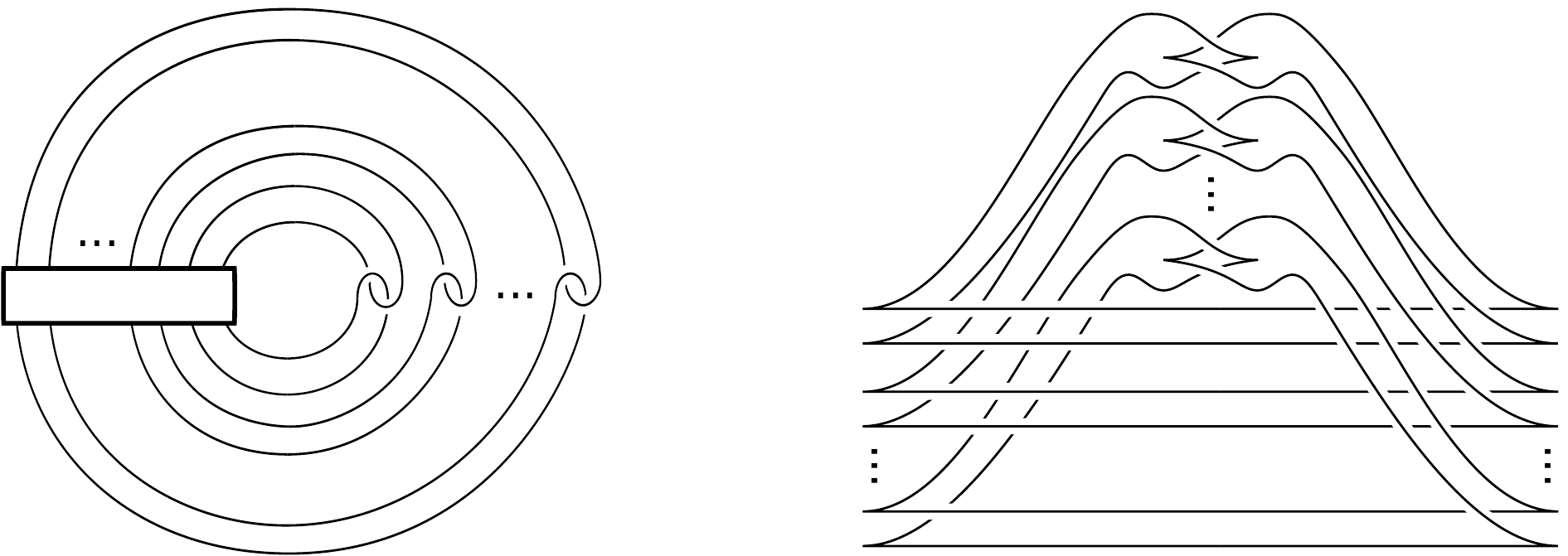}
    \caption{Filling with $B^4$ to obtain a description of $S$ as a handlebody.}
    \begin{picture}(0,0)
    \put(-19, 90){\huge{$\sim$}}    
    \put(-112,101){$0$}
    \put(-86,126){$0$}
    \put(-64,145){$0$}
    \put(-184, 93){$-1$-twist}
    \put(122, 166){$0$}
    \put(122, 121.5){$0$}
    \put(122, 45.5){$0$}
    \end{picture}
\end{figure}

We end with property (\ref{inv}). Since $S$ is a handlebody with no handles of index $1$, $3$ or $4$ and each of the $g$ $2$-handles in Figure 9 has even framing, the double of $S$ is diffeomorphic to $\# g S^2\times S^2$. So ${L}$ admits a smooth, shaved, orientation-preserving embedding into $\# g S^2\times S^2$. Then our discussion after the definition of end-summing can again be applied to conclude that each $\natural_n L$ for $n\in\mathbb{Z}^{> 0}$ admits a smooth embedding into $\# (ng) S^2\times S^2$. In particular, $\gamma(\natural_n {L})\leq ng <\infty$ for each $n\in\mathbb{Z}^{> 0}$. On the other hand, it follows from another standard standard cut-and-paste argument (e.g. see \cite{BE} or \cite{Tay}) using property (\ref{end}), the rank of each $H_2(Z_n)$ that was computed above, and Furuta's 10/8 Inequality (see Theorem 1 of \cite{Fu}) that $\gamma(\natural_n{L})>2ns$. In particular, choosing $s$ to be sufficiently large ensures that $\gamma(L)$ is an  arbitrarily large integer. Also, $\lim_{n\to\infty} \gamma(\natural_n{L})=\infty$. So each claim in property (\ref{inv}) holds as well.   
\end{proof}

The following corollary finds $\mathbb{R}^4$'s that display the opposite behavior from those produced in \cite{G4}. Specifically, the $\mathbb{R}^4$'s from \cite{G4} form an uncountable collection contained in $\mathbb{C}^2$ that each inherit a Stein structure. 
\begin{cor}
\label{notStein}
There is a compact Stein surface $S$ that contains uncountably many diffeomorphism types of exotic $\mathbb{R}^4$'s that each fail to admit a Stein structure.  Furthermore, these $\mathbb{R}^4$'s all realize  the same value of the Taylor invariant and $S$ can be chosen so that this value is an arbitrarily large integer.  
\end{cor}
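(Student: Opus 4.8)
The plan is to apply Theorem~\ref{inStein} to obtain one exotic $\mathbb{R}^4$ inside a compact Stein surface and then inflate it to an uncountable family by a Taubes-style radial argument, keeping the Taylor invariant pinned between an ambient embedding into a fixed $\#g\,S^2\times S^2$ (an upper bound) and a fixed compact piece common to every member of the family (a lower bound). So I would begin by fixing a pair $(d,s)$ as in the proof of Theorem~\ref{inStein}, letting $L$ be the resulting smooth, oriented $4$-manifold homeomorphic to $\mathbb{R}^4$ and $S$ the compact Stein surface of property~(\ref{embeddings}). From that construction $L$ lies inside $\text{int}(S)\subseteq S$; by property~(\ref{inv}) the integer $N:=\gamma(L)$ can be taken arbitrarily large, and since $L$ embeds smoothly in the double $\#g\,S^2\times S^2$ of the handlebody $S$ we have $N<\infty$.

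Next I would extract a fixed compact core. Because $N<\infty$ while $\frac12\beta_2(M)$ is a non-negative integer for every $M\in\mathcal{S}p(e)$ (hyperbolic forms have even rank), the supremum defining $\gamma(L)$ is attained at some $e_0\in\mathcal{E}(L)$ with $\min_{M\in\mathcal{S}p(e_0)}\frac12\beta_2(M)=N$, and $e_0(B^4)$ is compact in $L$. Then I would build the family: as recorded just before the statement of Theorem~\ref{inStein}, property~(\ref{end}) --- the even, positive definite, unimodular $Z_n$ filling in the complement of a compact piece of $\natural_n L$ --- places $L$ in the setting of Taubes' construction \cite{Tau} (compare \cite{BE}, \cite{Tay}), so the nested radial family $\{L_t\}_{t\ge 0}$ of open submanifolds of $L$, each homeomorphic to $\mathbb{R}^4$ and with $\bigcup_t L_t=L$, realizes uncountably many diffeomorphism types for arbitrarily large $t$. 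Choosing $t_0$ with $e_0(B^4)\subseteq L_{t_0}$, the tail $\{R_\alpha\}_{\alpha\in\mathcal{A}}:=\{L_t:t\ge t_0\}$ is still an uncountable collection of pairwise non-diffeomorphic open submanifolds of $L$, each homeomorphic to $\mathbb{R}^4$ and each containing $e_0(B^4)$.

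With the family in hand, the Taylor invariant is squeezed: $\mathcal{E}(R_\alpha)\subseteq\mathcal{E}(L)$ and $\mathcal{S}p(e)$ depends only on the subset $e(B^4)$, so $\gamma(R_\alpha)\le\gamma(L)=N$; and $e_0(B^4)\subseteq R_\alpha$ forces $\gamma(R_\alpha)\ge\min_{M\in\mathcal{S}p(e_0)}\frac12\beta_2(M)=N$, hence $\gamma(R_\alpha)=N$ for every $\alpha$. Since the standard $\mathbb{R}^4$ has vanishing Taylor invariant, each $R_\alpha$ is exotic; since Remark~4.5 of \cite{Tay} forces Stein $\mathbb{R}^4$'s to have $\gamma=0$, no $R_\alpha$ admits a Stein structure; and $R_\alpha\subseteq L\subseteq\text{int}(S)\subseteq S$. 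Thus $S$ is a compact Stein surface containing uncountably many diffeomorphism types of exotic $\mathbb{R}^4$'s that each fail to admit a Stein structure, all with Taylor invariant the arbitrarily large integer $N$, which is the assertion.

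I expect the step producing the uncountable family to be the real obstacle: one must verify that property~(\ref{end}) genuinely feeds $L$ into \cite{Tau} so as to yield uncountably many non-diffeomorphic sub-$\mathbb{R}^4$'s that persist for arbitrarily large $t$, i.e.\ that the relevant gauge-theoretic invariant does not eventually stabilize. The even positive definiteness of the $Z_n$ is exactly the structural input needed here, and it is the same ingredient that, through Furuta's $10/8$-inequality \cite{Fu}, bounded $\gamma(\natural_n L)$ from below in the proof of Theorem~\ref{inStein}. By contrast, the attainment of the supremum and the two inequalities pinning $\gamma$ are purely formal consequences of the definition of the Taylor invariant.
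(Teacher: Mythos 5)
Your proposal is correct and follows essentially the same route as the paper: take $L$ and its compact Stein surface $S$ from Theorem~\ref{inStein}, use property~(\ref{end}) together with Taubes's theory (the standard radial/exhaustion argument, as in \cite{Tay}) to get uncountably many diffeomorphism types of $\mathbb{R}^4$'s inside $L\subseteq S$, pin their Taylor invariants to $\gamma(L)$, and rule out Stein structures via Remark~4.5 of \cite{Tay}. The only difference is that you spell out the Taylor-invariant pinning (attained supremum plus monotonicity squeeze) that the paper subsumes into the cited ``standard argument.''
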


\begin{proof}
Let $L$ be an $\mathbb{R}^4$ constructed by Theorem \ref{inStein} to realize an arbitrarily large (finite) value of the Taylor invariant. Choose $S$ to be the corresponding compact Stein surface. By a standard argument  (e.g. see \cite{Tay}) that applies Taubes's extension of Donaldson theory in \cite{Tau}, we can use property (\ref{end})\linebreak above to conclude that there are uncountably many diffeomorphism types of shaved $\mathbb{R}^4$'s in $L$ that each have Taylor invariant equal to the Taylor invariant of $L$. Each of these $\mathbb{R}^4$'s admits a smooth, shaved, orientation-preserving embedding into $S$. As noted earlier, Remark 4.5 of \cite{Tay} ensures that none of these $\mathbb{R}^4$'s admit a Stein structure. 
\end{proof}
 \noindent The exotic $\mathbb{R}^4$'s produced by this corollary are actually distinguishable by their compact equivalence classes. Alternatively, it is possible to modify this construction to arrange for these $\mathbb{R}^4$'s to all realize the same compact equivalence class but still represent distinct diffeomorphism classes. We will define compact equivalence classes and discuss this in more detail in the next section. 
 
Next, we produce $\mathbb{R}^4$'s that behave similarly on the level of the Taylor invariant but do not interact nicely with Stein surfaces or definite $4$-manifolds. These are defined from an exhaustion of the exotic $\mathbb{R}^4$, denoted by $U$, that was introduced by Freedman and Taylor in \cite{FT}. Finding exotic $\mathbb{R}^4$'s by exhausting $U$ was first done in Corollary D of \cite{FT} and was subsequently studied more carefully in \cite{G1}, but neither considered embeddings into Stein surfaces. We remark that $U$ is typically referred to as the universal $\mathbb{R}^4$ because it is characterized as the unique $\mathbb{R}^4$ that contains every $\mathbb{R}^4$ as an end-summand.

\begin{lemma}
\label{smalllemma}
Suppose that $K$ is a topologically slice knot in $S^3$ that has a Legendrian representative in $(S^3, \xi_\text{std})$ with Thurston-Bennequin number equal to $1$. Let $\overline{K}$ be the knot in $S^3$ obtained from $K$ by switching all crossings, isotoped so that the two knots are disjoint and not linking. Then the handlebody $Y_K$ defined by attaching $0$-framed $2$-handles to $B^4$ along both $K$ and $\overline{K}$ smoothly embeds into $U$ but does not smoothly embed into any Stein surface (either open or compact). In particular, this holds whenever $K$ is the positive (untwisted) Whitehead double of a Legendrian knot in $(S^3, \xi_\text{std})$ with non-negative Thurston-Bennequin number. 
\end{lemma}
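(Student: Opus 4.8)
The plan is to establish the embedding into $U$ and the non-embedding into Stein surfaces separately, and then to verify that positive Whitehead doubles meet the hypotheses.

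\emph{Embedding $Y_K$ into $U$.} Since $K$ is topologically slice, so is its mirror $\overline K$, and I would first observe that each of $X_0(K):=B^4\cup h_K$ and $X_0(\overline K):=B^4\cup h_{\overline K}$ embeds flatly in $S^4$. If $D\subset B^4$ is a locally flat slice disk for $K$ with tubular neighbourhood $\nu D\cong D^2\times D^2$, then $\nu D\cap S^3$ is a tubular neighbourhood of $K$ carrying the Seifert (i.e.\ $0$-) framing, so $B^4\cup\nu D\cong X_0(K)$ and hence $S^4=X_0(K)\cup_{S^3_0(K)}(B^4\setminus\nu D)$; the same applies to $\overline K$. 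Because $Y_K=X_0(K)\,\natural\,X_0(\overline K)$, placing the two summands in disjoint balls in $\mathbb R^4$ and joining them by a tube gives a flat topological embedding $Y_K\hookrightarrow\mathbb R^4$. I would then apply the technique of \cite{G1}: smoothing the complement of $Y_K$ in this topological $\mathbb R^4$ rel a collar of $\partial Y_K$ (via the smoothing theory of \cite{Q}) yields a smooth $4$-manifold homeomorphic to $\mathbb R^4$ into which $Y_K$ embeds smoothly, and since $U$ contains every $\mathbb R^4$ as an end-summand this produces a smooth embedding $Y_K\hookrightarrow U$.

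\emph{Non-embedding into Stein surfaces.} The geometric input is that $K\#\overline K$ is smoothly ribbon: after reversing the orientation of $\overline K$ it is the standard ribbon knot $K\#(-K)$, which bounds an obvious ribbon disk in $B^4$. Banding $K$ to $\overline K$ to realize this connected sum and capping the resulting pair of pants with the ribbon disk produces a smoothly embedded annulus $A\subset B^4$ with $\partial A=K\sqcup\overline K$; capping the two boundary circles of $A$ with the cores of $h_K$ and $h_{\overline K}$ then gives a smoothly embedded $2$-sphere $\Sigma\subset Y_K$. Since $K$ and $\overline K$ are unlinked, $\Sigma$ has self-intersection $0$, and $[\Sigma]=\pm[\Sigma_K]\pm[\Sigma_{\overline K}]$ is nonzero in $H_2(Y_K)\cong\mathbb Z^2$, where $\Sigma_K$ and $\Sigma_{\overline K}$ denote the capped Seifert surfaces of the two handles. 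Suppose $Y_K$ embedded smoothly into a Stein surface $W$ (the open case reduces to the compact one by exhausting $W$ by Stein domains). If the image of $[\Sigma]$ in $H_2(W)$ is nonzero, then the adjunction inequality associated to $W$, applied to $\Sigma$, forces $0=[\Sigma]^2+|\langle c_1(W),[\Sigma]\rangle|\le 2g(\Sigma)-2=-2$, which is absurd.

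\emph{The main obstacle} is to rule out embeddings under which $[\Sigma]$ --- equivalently both $[\Sigma_K]$ and $[\Sigma_{\overline K}]$ --- dies in $H_2(W)$; this is precisely the ``trivial on second homology'' case singled out in the introduction, and it is the case needed for the application to $\mathbb R^4$'s. Here the hypothesis $\mathrm{tb}(K)=1$ enters: it makes $h_K$ a Stein $2$-handle of framing $\mathrm{tb}(K)-1=0$, so $X_0(K)$ is itself a compact Stein surface and $Y_K=X_0(K)\,\natural\,(-X_0(K))$. I would embed $W$ into a closed minimal K\"ahler surface of general type (Lisca--Mati\'c), invoke Seiberg--Witten adjunction there, and combine it with the Stein structure on the $X_0(K)$ summand (which pins down $\langle c_1,[\Sigma_K]\rangle$ and the minimal genus of a surface representing $[\Sigma_K]$) to extract a contradiction even when $[\Sigma]$ is null-homologous. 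I expect this homological-triviality case to be the technical heart of the argument.

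\emph{Positive Whitehead doubles.} The positive untwisted Whitehead double of any knot has trivial Alexander polynomial, hence is topologically slice by Freedman \cite{F}; and when the companion admits a Legendrian representative with nonnegative Thurston--Bennequin number, its positive Whitehead double admits a Legendrian representative with Thurston--Bennequin number $1$ (cf.\ the Whitehead-double examples in \cite{G2}). Stabilizing if necessary realizes $\mathrm{tb}=1$ exactly, so every such knot satisfies the hypotheses of the lemma.
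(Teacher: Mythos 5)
Your embedding of $Y_K$ into $U$ (via topological sliceness and Lemma 1.1 of \cite{G1}) and your verification for positive Whitehead doubles (trivial Alexander polynomial plus the $tb=1$ Legendrian realization from \cite{AM}) both match the paper. The problem is the Stein non-embedding: you construct the same square-zero sphere $\Sigma\subset Y_K$ from the slice disk of $K\#\overline K$ and correctly dispose of the case where $[\Sigma]$ survives in $H_2(W)$, but you then explicitly leave open the case where $[\Sigma]$ dies, and the strategy you sketch for it does not work. Embedding $W$ into a minimal K\"ahler surface and invoking Seiberg--Witten adjunction gives no information about a null-homologous class --- adjunction inequalities, in the ambient closed manifold or in $W$ itself, only constrain homologically essential surfaces --- and knowing that $X_0(K)$ is Stein does not by itself ``pin down'' $\langle c_1,[\Sigma_K]\rangle$ once the class has been killed by the embedding. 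So the case you call the technical heart is genuinely missing, and it is exactly the case needed for the intended applications.

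The paper's resolution is different and is where the $tb=1$ hypothesis is really used: since $X_K$ is a compact Stein surface, one end-sums the ambient Stein surface $S$ with the open Stein surface $S'=\mathrm{int}(X_K)$ to get a new Stein surface $S''$ (Stein by \cite{CG}), and then builds a \emph{new} embedded copy of $Y_K$ inside $S''$ by ambiently boundary summing the copy of $X_{\overline K}$ coming from the original embedding of $Y_K$ into $S$ (using $\overline{X_K}=X_{\overline K}$, so that one gets an orientation-preserving embedding of $X_{\overline K}$ into $S$ whether the original embedding preserved or reversed orientation --- a point your argument also ignores) with the tautological copy of $X_K$ pushed into $S'$. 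In this new copy the sphere's class has nonzero component on the generator of $H_2(S')\subseteq H_2(S'')$, so it is automatically homologically essential, and the standard fact that a Stein surface contains no homologically essential smoothly embedded sphere of square zero gives the contradiction with no case analysis on the original map on homology. Without this (or some substitute argument handling the homologically trivial case), your proof establishes only a weaker statement than the lemma.
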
 

\begin{proof} 
It easily follows from Lemma 1.1 of \cite{G1} that $Y_K$ admits a smooth embedding into some $\mathbb{R}^4$, so that the universal property of $U$ guarantees a smooth embedding of $Y_K$ into $U$. To obtain a contradiction, suppose that $Y_K$ also admits a smooth embedding into some Stein surface $S$. By passing to the interior of the appropriate component, we can assume that $S$ is both connected and open. Construct handlebodies $X_K$ and $X_{\overline{K}}$ by attaching a $0$-framed $2$-handle to $B^4$ along $K$ and $\overline{K}$, respectively. Notice boundary summing $X_K$ and $X_{\overline{K}}$ produces $Y_K$.  Our requirement on the Thurston-Bennequin number of $K$ ensures that the interior $S'$ of $X_K$ is also an open Stein surface, so that the result $S''$ of end-summing $S$ and $S'$ (using any choice of rays) is again an open Stein surface by \cite{CG}. Since $\overline{X_K}=X_{\overline{K}}$, restricting the embedding of $Y_K$ into $S$ to either $X_K$ or $X_{\overline{K}}$ (depending on weather it is orientation-preserving or orientation-reversing) provides a smooth, orientation-preserving embedding of $X_{\overline{K}}$ into the copy of $S$ in $S''$. Isotoping $X_K$ into its own interior using the collared neighborhood of its boundary defines a smooth, orientation-preserving embedding of $X_K$ into the copy of $S'$ in $S''$. Using these two embeddings, ambiently boundary sum  $X_{\overline{K}}\subseteq S\subseteq S''$ and $X_K\subseteq S'\subseteq S''$ to produce a new smoothly embedded copy of $Y_K$ in $S''$. After choosing the appropriate orientations on $K$ and $\overline{K}$, the knot $K\#\overline{K}$ is smoothly slice. The corresponding slice disk ensures the existence of a smoothly embedded sphere in $Y_K$ representing the sum of a generating pair for $H_2(Y_K)=\mathbb{Z}\oplus \mathbb{Z}$. This sphere includes under the new embedding of $Y_K$ to a smoothly embedded, homologically essential sphere with trivial square in $S''$.  However, it is well-known that this cannot happen in a Stein surface. So we have reached the necessary contradiction. The final claim from the statement follows from the proof of Theorem 3.4 in \cite{AM} and the well-known fact that all Whitehead doubles are topologically slice. 
\end{proof} 

\begin{theorem}
\label{notinStein}
There are exotic $\mathbb{R}^4$'s realizing arbitrarily large (finite) values of the Taylor invariant that each contain a smooth, compact, codimension-$0$ submanifold that does not smoothly embed into any Stein surface  (either open or compact) or any smooth, closed, definite $4$-manifold. These compact submanifolds each admit a smooth embedding into $U$.
\end{theorem}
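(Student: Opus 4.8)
The plan is to manufacture the required exotic $\mathbb{R}^4$'s inside the universal $\mathbb{R}^4$ $U$, arranging each to contain a copy of a compact handlebody of the type produced by Lemma \ref{smalllemma} while end-summing with one of the $\mathbb{R}^4$'s $\natural_n L$ from Theorem \ref{inStein} to raise the Taylor invariant — but only by a finite amount.

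First I would fix a knot $K$ as in Lemma \ref{smalllemma}, say the positive untwisted Whitehead double of a Legendrian knot with non-negative Thurston--Bennequin number, chosen reversible so that $K\#\overline K$ is smoothly slice; thus $Y_K$ smoothly embeds in $U$ but in no Stein surface. Lemma 1.1 of \cite{G1} produces a (possibly exotic) $\mathbb{R}^4$ $R_K$ containing $Y_K$, and that construction exhibits $R_K$ as a shaved $\mathbb{R}^4$ inside some closed spin $4$-manifold $\#k\,(S^2\times S^2)$; in particular $\gamma(R_K)\le k<\infty$. For each $n\in\mathbb{Z}^{>0}$ I would set $R_n := R_K\,\natural\,(\natural_n L)$. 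Both $R_K$ and $\natural_n L$ are $\mathbb{R}^4$'s, hence end-summands of $U$, so $R_n$ embeds smoothly in $U$, and the copy of $Y_K$ inside the $R_K$-summand of $R_n$ embeds in $U$ as well. Since the Taylor invariant is monotone under shaved embeddings, the computation in the proof of Theorem \ref{inStein} gives $\gamma(R_n)\ge\gamma(\natural_n L)>2ns$, which is arbitrarily large for $s$ large; and end-summing the shaved embeddings of $R_K$ and $\natural_n L$ realizes $R_n$ as a shaved $\mathbb{R}^4$ in $\#(k+ng)\,(S^2\times S^2)$, a closed spin $4$-manifold with hyperbolic form, so $\gamma(R_n)\le k+ng<\infty$. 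Hence each $R_n$ has positive, arbitrarily large, finite Taylor invariant — in particular it is exotic — and contains the compact submanifold $Y_K$, which embeds in $U$.

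It remains to see that $Y_K$ embeds in no Stein surface and in no smooth closed definite $4$-manifold. The first is exactly Lemma \ref{smalllemma}. For the second I would proceed in the spirit of the proof of Lemma \ref{smalllemma}. Since $K\#\overline K$ is smoothly slice, the slice disk gives a smoothly embedded sphere $S\subseteq Y_K$ with trivial normal bundle representing $a+b$, where $a,b$ generate $H_2(Y_K)=\mathbb{Z}\oplus\mathbb{Z}$. Suppose $Y_K$ embeds smoothly in a closed $4$-manifold $M$ with definite intersection form; since $\overline{Y_K}\cong Y_K$ we may take $M$ negative definite, so that $Q_M$ is diagonalizable by Donaldson's theorem and hence has no nonzero isotropic vector. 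The classes $a,b$ have square $0$, so they map to torsion in $H_2(M)$ and $S$ becomes homologically torsion; writing $M = Y_K\cup_\partial V$, one sees that $V$ has negative semidefinite intersection form and, after surgering $M$ in the complement of $Y_K$, that $\partial Y_K\cong S^3_0(K)\,\#\,S^3_0(\overline K)$ bounds a negative definite $4$-manifold $V$ with $b_1(V)=0$. I would finish by ruling this out via the Heegaard Floer correction-term constraints on definite fillings of $0$-surgeries, which fail here precisely because $K$ (and hence $\overline K$) is topologically but not smoothly slice, so that $S^3_0(K)$ has a negative $d$-invariant in the relevant spin$^c$ structure.

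The step I expect to be the main obstacle is this last one. In the Stein case of Lemma \ref{smalllemma} one produces the contradiction by re-forming $Y_K$ inside an end-sum with the Stein surface $\operatorname{int}(X_K)$ so that $S$ becomes homologically essential, which no Stein surface allows; but a closed definite $4$-manifold likewise contains no homologically essential sphere of square $0$, and it also cannot contain the $2$-handle of $X_K$ homologically essentially, so that device has no analogue here and one is forced to handle the case in which $S$ becomes null-homologous directly. Making the $d$-invariant (or Donaldson-theoretic) obstruction on $\partial Y_K = S^3_0(K)\,\#\,S^3_0(\overline K)$ precise — together with the bookkeeping needed to pass to an $M$, respectively a complementary filling $V$, with controlled fundamental group and $b_1$ — is the genuinely new input this theorem needs beyond Lemma \ref{smalllemma}.
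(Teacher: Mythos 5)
Your proposal diverges from what the theorem actually requires at the definite-manifold step, and that is where it breaks. The statement does not force the Stein-obstructed piece and the definite-obstructed piece to be one and the same handlebody $Y_K$: the paper takes $C_1$ from Lemma \ref{smalllemma} and, separately, a compact piece $C_2$ of an already-known exotic $\mathbb{R}^4$ that embeds in no closed definite $4$-manifold (e.g.\ the end-sum of an $\mathbb{R}^4$ from \cite{G1} with its orientation-reverse, via the standard Donaldson/Taubes argument), places $C_2$ in $U$ by the universal property, and then any compact codimension-$0$ submanifold of $U$ containing $C_1\cup C_2$ fails both embeddings. By insisting that $Y_K$ itself embed in no closed definite $4$-manifold, you are led to the claim that $\partial Y_K\cong S^3_0(K)\#\bigl(-S^3_0(K)\bigr)$ (note $\overline{K}$ is the mirror, so this boundary has the symmetric form $Y\#(-Y)$) bounds no negative definite $4$-manifold with $b_1=0$. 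You give no actual correction-term inequality, and the hypothesis available here --- $K$ topologically slice with a $tb=1$ Legendrian representative, hence not smoothly slice --- does not by itself control any $d$-invariant of $S^3_0(K)$ (it does not, for instance, force $V_0(K)\neq 0$), while all the obvious Floer-theoretic data of $Y\#(-Y)$ is symmetric under orientation reversal. So the step you yourself identify as ``the genuinely new input'' is not established, is unnecessary for the theorem, and is doubtful at this level of generality; this is the essential gap.

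There is a secondary problem with your Taylor-invariant bookkeeping. The upper bound $\gamma(R_n)\leq k+ng$ rests on the assertion that the construction in Lemma 1.1 of \cite{G1} exhibits $R_K$ as a shaved $\mathbb{R}^4$ in $\# k\,(S^2\times S^2)$; that lemma only produces \emph{some} exotic $\mathbb{R}^4$ containing $Y_K$ by smoothing a topological embedding, with no stated smooth embedding into a closed spin manifold, so $\gamma(R_K)<\infty$ needs an argument you have not supplied. (The lower bound via monotonicity over the end-summand $\natural_n L$ is fine, but it imports Theorem \ref{inStein} needlessly.) The paper avoids both issues at once: it exhausts $U$ by interiors of topological $4$-balls $E_i$ chosen to contain the two compact pieces; each $\gamma(E_i)$ is finite because $E_i$ lies in the double of a smooth compact codimension-$0$ submanifold of $U$, which is closed, spin, and of signature zero, hence has hyperbolic form; and $\gamma(E_i)\to\gamma(U)=\infty$ already yields arbitrarily large finite values. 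If you want to salvage your construction, replace $R_K$ by such an $E_i\supseteq C_1\cup C_2$ and drop the attempted $d$-invariant argument entirely.
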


\begin{proof}
It is immediate from Lemma \ref{smalllemma} that there there is a smooth, compact, codimension-$0$ submanifold $C_1$ in $U$ that does not smoothly embed into any Stein surface. (Alternatively, it is possible to prove the existence of $C_1$ directly by returning to the definition of $U$ from \cite{FT}.)  The universal property of $U$ guarantees that there is also a smooth, compact, codimension-$0$ submanifold $C_2$ in $U$ that does not smoothly embed into any smooth, closed, definite $4$-manifold, as there are several known constructions of exotic $\mathbb{R}^4$'s that have a submanifold satisfying these requirements (e.g. end-sum one of the $\mathbb{R}^4$'s constructed in \cite{G1} with itself using two different orientations). To define the necessary $\mathbb{R}^4$'s, fix an exhaustion $E_0\subseteq E_1\subseteq E_2\subseteq \cdots$ of $U$ with each $E_i$ equal to the interior of a topological $4$-ball in $U$. Choose this exhaustion so that each $E_i$ contains both $C_1$ and $C_2$. Then each $E_i$ is an $\mathbb{R}^4$ (inheriting its smooth structure from $U$) that has the necessary compact submanifold. Observe also that each $\gamma(E_i)<\infty$ because each $E_i$ admits a smooth embedding into a smooth, closed, spin $4$-manifold with hyperbolic intersection form, obtained by doubling a smooth, compact, codimension-$0$ submanifold of $U$. It is clear that $\lim_{i\to\infty} \gamma(E_i)=\gamma(U)=\infty$. 
\end{proof}

Comparing the previous theorem to Gompf's results from \cite{G5} can sometimes produce two non-diffeomorphic smooth structures on the same underlying topological $4$-manifold, distinguishable by the existence of a smooth, compact, codimension-$0$ submanifold that does not smoothly embed into a Stein surface. In particular, we generalize  the first statement of Theorem 3.4 from \cite{G5} without using either the genus function or the Taylor invariant, which were both necessary to the proof given in that paper. 
\begin{cor} 
\label{usingU}
Every open, topological $4$-manifold can be smoothed so that it contains a smooth, compact, codimension-$0$ submanifold that does not smoothly embed into any Stein surface. As a result, every open, topological $4$-manifold $X$ that topologically embeds into a $4$-dimensional handlebody with all indices $\leq 2$ admits at least two diffeomorphism classes of smooth structures. In particular, the interior of any $4$-dimensional handlebody with all indices $\leq 2$ admits at least two diffeomorphism classes of smooth structures.  
\end{cor}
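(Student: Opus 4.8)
The plan is to obtain the first statement from Lemma~\ref{smalllemma} by a single end-sum, and then to build the ``comparison'' smooth structure needed for the other two statements out of a Stein--Casson smoothing. Throughout I assume $X$ is connected and oriented; this is automatic in all cases of interest and is needed both for end-summing and for the Stein--Casson construction. (For non-orientable $X$ the first statement is anyway immediate, since any smoothing then contains a non-orientable compact codimension-$0$ submanifold, which embeds in no orientable manifold and in particular in no Stein surface.)

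For the first statement, fix any smooth structure $\Sigma_0$ on the open topological $4$-manifold $X$; such a structure exists by Quinn~\cite{Q}. Lemma~\ref{smalllemma} (or Theorem~\ref{notinStein}) supplies a compact, codimension-$0$ handlebody $Y_K$ that smoothly embeds into the universal $\mathbb{R}^4$ $U$ but into no Stein surface, open or compact. I would form the end-sum $X_{\Sigma_0}\natural U$, choosing the defining ray in $U$ disjoint from the compact set $Y_K$. As recorded in the discussion after the definition of end-summing, this produces a smooth structure $\Sigma_1$ on the \emph{same} underlying topological manifold $X$, and $X_{\Sigma_1}$ contains a smoothly embedded copy of $U$ with a tubular neighborhood of its ray removed --- a region that still contains $Y_K$. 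Hence $(X,\Sigma_1)$ contains $Y_K$ as a smooth, compact, codimension-$0$ submanifold that does not smoothly embed into any Stein surface.

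Now suppose in addition that $X$ topologically embeds in the interior $H^{\circ}$ of a $4$-dimensional handlebody $H$ with all indices $\leq 2$, say via $\phi\colon X\hookrightarrow H^{\circ}$; by invariance of domain $\phi(X)$ is open in $H^{\circ}$. The essential input is that $H^{\circ}$ admits a Stein--Casson smoothing: replacing each $2$-handle of $H$ by a Casson handle with sufficiently many negative kinks in its first stage (so that the attaching framings fall in the range allowed by the Stein criterion), the construction of \cite{G5} underlying its Theorem~3.4 equips $H^{\circ}$ with a smooth structure $\Sigma$ for which $(H^{\circ},\Sigma)$ is an open Stein surface, using only that $H$ is locally finite with indices $\leq 2$. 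Transporting $\Sigma|_{\phi(X)}$ through the homeomorphism $\phi$ gives a smooth structure $\Sigma_2$ on $X$. Any smooth, compact, codimension-$0$ submanifold of $(X,\Sigma_2)$ is carried by $\phi$ onto such a submanifold of the open Stein surface $(H^{\circ},\Sigma)$, and therefore smoothly embeds into a Stein surface. Since ``containing a compact codimension-$0$ submanifold that embeds into no Stein surface'' is a diffeomorphism invariant that holds for $(X,\Sigma_1)$ but not for $(X,\Sigma_2)$, we obtain $\Sigma_1\not\cong\Sigma_2$, so $X$ carries at least two diffeomorphism classes of smooth structures. The last sentence is the special case $X=H^{\circ}$, embedded in $H$ by the identity.

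The main obstacle is the lone non-formal step: producing the Stein--Casson smoothing of $H^{\circ}$ for an arbitrary, possibly infinite (locally finite) handlebody with indices $\leq 2$. This is where the argument leans on \cite{G5}, and some care is needed to check that the Casson-handle construction there goes through in the locally finite setting of our conventions and that the resulting Stein structure exhausts $H^{\circ}$, so that each compact codimension-$0$ submanifold genuinely lies inside it. The remaining ingredients --- Quinn smoothability, invariance of the underlying topological manifold under end-summing with an $\mathbb{R}^4$, invariance of domain, and the restriction and transport of smooth structures along a homeomorphism onto an open subset --- are routine.
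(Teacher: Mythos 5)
Your argument for orientable $X$ is essentially the paper's: end-sum a Quinn smoothing with $U$ (carrying the compact piece from Theorem~\ref{notinStein}/Lemma~\ref{smalllemma} that embeds in no Stein surface), and compare against a smoothing pulled back from a Casson smoothing of the handlebody interior that embeds into (or is) a Stein surface; the paper cites Lemma 3.2 of \cite{G5} for the latter, and only needs the Casson smoothing to embed smoothly in a Stein surface rather than to be Stein itself, so that step is fine.

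The genuine gap is your blanket assumption that $X$ is orientable. Nonorientable manifolds are not excluded by the hypotheses: a $4$-dimensional handlebody with all indices $\leq 2$ can be nonorientable (e.g.\ a twisted $D^3$-bundle over $S^1$, or disk bundles over nonorientable surfaces), so the second and third statements genuinely include nonorientable $X$. Your parenthetical fix only rescues the \emph{first} statement in that case, and it simultaneously destroys your mechanism for the second: by your own observation, \emph{every} smoothing of a nonorientable open $4$-manifold contains a compact, nonorientable, codimension-$0$ submanifold that embeds in no Stein surface, so the property ``contains a compact codimension-$0$ piece embedding in no Stein surface'' is satisfied by all smooth structures on such an $X$ and cannot distinguish $\Sigma_1$ from $\Sigma_2$; likewise $(H^{\circ},\Sigma_2$-side$)$ cannot embed in a Stein surface at all when $H$ is nonorientable. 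The paper resolves exactly this point by passing to the orientable double cover: the end-sum with $U$ is performed after orienting a tubular neighborhood of the chosen ray, the Casson smoothing of $H^{\circ}$ is chosen so that its \emph{lift} to $\tilde H$ embeds in a Stein surface, and the two structures are distinguished by comparing their pullbacks $\tilde\Sigma'$, $\tilde\Sigma''$ on the orientable cover $\tilde X$, where the Stein-embedding criterion does separate them. Your proposal needs this (or some substitute) to cover the nonorientable case; as written, it proves the second and third statements only for orientable $X$.
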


\begin{proof}
Suppose that $X$ is an open, topological $4$-manifold.  Section 8.2 of \cite{FQ} produces a smooth structure $\Sigma$ on $X$.  It follows from Theorem \ref{notinStein} that $U$ contains a smooth, compact, codimension-$0$ submanifold that does not smoothly embed into any Stein surface.  If $X$ is orientable, the necessary smooth structure $\Sigma'$ on $X$ is obtained by end-summing $X_\Sigma$ with $U$ (using any orientations and any choice of rays). If $X$ is nonorientable,  fixing an orientation on a tubular neighborhood of some smooth, properly-embedded ray $\gamma$ in $X_\Sigma$ allows us to still perform this end-sum operation to obtain the necessary smooth structure $\Sigma'$ on $X$. So the first claim holds.   

Next, suppose that $X$ topologically embeds into a  $4$-dimensional handlebody $H$ with all indices $\leq 2$. If $X$ is orientable, set $\tilde{X}=X$. Otherwise, set $\tilde{X}$ equal to the orientable double cover of $X$. Choose $\tilde{H}$ analogously for $H$ and let $\pi:\tilde{H}\to H$ denote the corresponding covering map (equal to the identity in the orientable case).  So $X$ inherits a smooth structure $\Sigma''$ from a Casson smoothing on the interior of $H$ produced by Lemma 3.2 of \cite{G5}, with this Casson smoothing  chosen so that its lift under $\pi$ smoothly embeds into some Stein surface. Notice that  $\tilde{X}$ topologically embeds into $\tilde{H}$ and $\pi$ restricts to a covering map $\pi_X:\tilde{X}\to X$. Then $\Sigma''$ lifts to a smooth structure $\tilde{\Sigma}''=\pi_X^*\Sigma''$ on $\tilde{X}$ and $\tilde{X}_{\tilde{\Sigma}''}$ smoothly embeds into this Stein surface. For $\tilde\Sigma'=\pi_X^*\Sigma'$, it is clear that $\tilde X_{\tilde\Sigma'}$ and $\tilde X_{\tilde\Sigma''}$ are not diffeomorphic. As needed, this means that $X_{\Sigma'}$ and $X_{\Sigma''}$ are also not diffeomorphic. The final sentence of the statement clearly follows as well.   
\end{proof}

Our final corollary of this section uses Theorem \ref{inStein} to produce an exotic $\mathbb{R}^4$ with infinite Taylor invariant that smoothly embeds into an open Stein surface, and then applies Theorem \ref{notinStein} to conclude that this property distinguishes it from  $U$.  The results from Section 7 of \cite{G5} allow us to subsequently locate uncountably many $\mathbb{R}^4$'s meeting this description. This technique is discussed in more detailed at the end of the next section. Combining all of the known work about exotic $\mathbb{R}^4$'s actually produces many examples with infinite Taylor invariant, some of which are also not diffeomorphic to $U$. For example, it is straightforward to modify Example 5.10 of \cite{Tay} to find uncountably many $\mathbb{R}^4$'s in ${\mathbb{C}P^2}$ with infinite Taylor invariant. However, it is again unclear how any of these previous constructions relate to Stein surfaces. 
\begin{cor}
\label{notU}
There is an exotic $\mathbb{R}^4$, denoted ${L}_\infty$, with infinite Taylor invariant that admits a smooth embedding into an open Stein surface. Additionally, every smooth, compact, codimension-$0$ submanifold of $L_\infty$ admits a smooth embedding into a finite connected sum of $\overline{\mathbb{C}P^2}$'s. Either of these properties is sufficient to distinguish the diffeomorphism type of $L_\infty$ from the diffeomorphism type of $U$. Furthermore, $L_\infty$ contains uncountably many diffeomorphism types of exotic $\mathbb{R}^4$'s that also have infinite Taylor invariant.
\end{cor}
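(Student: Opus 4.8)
The plan is to take $L_\infty = \natural_\infty L$, the simultaneous end-sum of the standard $\mathbb{R}^4$ with countably many copies of the $\mathbb{R}^4$ $L$ from Theorem \ref{inStein}, and to deduce all four assertions from properties (\ref{inv})--(\ref{neg}) of that theorem together with Theorem \ref{notinStein}. By the discussion following the definition of end-summing, an infinite simultaneous end-sum of $\mathbb{R}^4$'s is again an oriented $\mathbb{R}^4$, so $L_\infty$ is homeomorphic to $\mathbb{R}^4$. To see $\gamma(L_\infty)=\infty$ (which in particular makes $L_\infty$ exotic), I would use that $\natural_\infty L \cong (\natural_n L)\,\natural\,(\natural_\infty L)$, so each $\natural_n L$ is a shaved $\mathbb{R}^4$ in $L_\infty$; since the Taylor invariant of a shaved $\mathbb{R}^4$ in a spin open $4$-manifold is at most that of the ambient manifold (the minimum in the definition depends only on the diffeomorphism type of the interior of the embedded ball), we get $\gamma(L_\infty)\ge \gamma(\natural_n L)$ for all $n$, and property (\ref{inv}) gives $\gamma(\natural_n L)\to\infty$. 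Property (\ref{embeddings}) with $n=\infty$ gives the promised smooth embedding of $L_\infty$ into an open Stein surface $S_\infty$.

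The submanifold statement is then bookkeeping: writing $L_\infty$ as the increasing union of the open submanifolds $\natural_n L$, any compact codimension-$0$ submanifold $C\subseteq L_\infty$ lies in some $\natural_n L$, which by property (\ref{neg}) embeds smoothly into $\#n\overline{\mathbb{C}P^2}$, so $C$ embeds into a finite connected sum of $\overline{\mathbb{C}P^2}$'s. For the distinction from $U$, I would use either of two consequences of Theorem \ref{notinStein}. Its proof produces a compact codimension-$0$ submanifold $C_1\subseteq U$ that embeds in no Stein surface; but every submanifold of $L_\infty$ embeds in the Stein surface $S_\infty$, so $L_\infty\not\cong U$. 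Alternatively, its proof produces a compact codimension-$0$ submanifold $C_2\subseteq U$ that embeds in no smooth closed definite $4$-manifold; but we have just seen every compact submanifold of $L_\infty$ embeds in the closed, definite manifold $\#n\overline{\mathbb{C}P^2}$, so again $L_\infty\not\cong U$.

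It remains to find uncountably many $\mathbb{R}^4$'s inside $L_\infty$ that still have infinite Taylor invariant. A natural supply of candidates comes from combining the uncountable families of Corollary \ref{notStein}: applied to the pieces $\natural_{c_n}L$, which satisfy property (\ref{end}) of Theorem \ref{inStein} for a sequence $c_n\to\infty$, that argument yields, inside each $\natural_{c_n}L$, an uncountable family of shaved exotic $\mathbb{R}^4$'s all with Taylor invariant $\gamma(\natural_{c_n}L)$. Placing disjoint copies of $\natural_{c_1}L,\natural_{c_2}L,\dots$ inside $L_\infty$ and end-summing one $\mathbb{R}^4$ chosen from each family produces a shaved $\mathbb{R}^4$ in $L_\infty$ whose Taylor invariant, by the monotonicity above, is again $\infty$. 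Showing that uncountably many of these are pairwise non-diffeomorphic --- and, more strongly, arranging uncountably many $\mathbb{R}^4$'s in $L_\infty$ that simultaneously retain the embedding into an open Stein surface and the property that every compact codimension-$0$ submanifold embeds in a finite $\#\overline{\mathbb{C}P^2}$ --- is where I would invoke the exhaustion machinery of Section 7 of \cite{G5}: varying the Casson handles at infinitely many stages of the nested Stein--Casson pieces that exhaust $L_\infty$ produces uncountably many diffeomorphism types, distinguished by which compact pieces embed in which, while preserving the exhaustion by Stein pieces. This last distinguishing step is the main obstacle, since an infinite Taylor invariant alone cannot separate the examples and one must keep track of a finer notion of ``complexity at infinity''; everything else in the proof only repackages structure already supplied by Theorems \ref{inStein} and \ref{notinStein}.
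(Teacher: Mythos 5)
Your treatment of the first three assertions is fine and matches the paper's route: take $L_\infty=\natural_\infty L$, get $\gamma(L_\infty)=\infty$ from property (\ref{inv}) via monotonicity (the paper treats this as clear, and your shaved-$\mathbb{R}^4$ argument is the right way to fill it in), get the Stein embedding from property (\ref{embeddings}) with $n=\infty$, get the compact-submanifold statement from property (\ref{neg}) plus the exhaustion of $L_\infty$ by the $\natural_n L$, and separate $L_\infty$ from $U$ using the submanifolds $C_1$ and $C_2$ supplied by Theorem \ref{notinStein}.

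The gap is in the final claim. Your plan --- choose one $\mathbb{R}^4$ from each uncountable family of Corollary \ref{notStein} inside disjoint copies of $\natural_{c_n}L$ and end-sum --- does not come with any mechanism for showing that uncountably many of the resulting infinite end-sums are pairwise non-diffeomorphic, and you say as much; the invariants that separate the members of each family inside $\natural_{c_n}L$ (compact equivalence via Taubes's theory and the definite pieces $Z_n$) are not obviously preserved after end-summing with infinitely many further copies of $L$. Moreover, your description of the Section 7 machinery of \cite{G5} (``varying the Casson handles at infinitely many stages of the nested Stein--Casson pieces'') is not what that section does, and the corollary does not ask the sub-$\mathbb{R}^4$'s to retain the Stein-embedding or $\#\overline{\mathbb{C}P^2}$ properties, so you are also solving a harder problem than needed. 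The missing observation is that the second property you already proved --- every compact codimension-$0$ submanifold of $L_\infty$ embeds in some $\#n\overline{\mathbb{C}P^2}$ --- says exactly that $L_\infty$ is compactly negative definite, which is the hypothesis of Theorem 7.1 of \cite{G5}. Applying that theorem directly to $L_\infty$ (it end-sums a compactly negative definite smoothing with an uncountable family of small $\mathbb{R}^4$'s) produces uncountably many diffeomorphism types of $\mathbb{R}^4$'s, each embedding in $L_\infty$ and lying in the same compact equivalence class as $L_\infty$, hence with infinite Taylor invariant; the uncountable distinguishing is done inside that theorem, so no new ``complexity at infinity'' invariant needs to be built. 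This is the one-line step the paper uses and the one your argument is missing.
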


\begin{proof}
Let $L_\infty=\natural_\infty L$, where $L$ is any exotic $\mathbb{R}^4$ produced by Theorem \ref{inStein}. It is clear from\linebreak properties (1), (2) and (4) of Theorem \ref{inStein} that $\gamma(L_\infty)=\infty$ and the necessary embeddings exist. On the other hand,  Theorem \ref{notinStein} guarantees that $U$ does not satisfy either of these properties. Finally, Theorem 7.1 of \cite{G5} locates the necessary $\mathbb{R}^4$'s inside of $L_\infty$ to ensure the final claim holds as well. 
\end{proof}

Determining when constructions of $\mathbb{R}^4$'s with infinite Taylor invariant provide a new description of $U$ seems to be a difficult problem in general. For example, equipping $L_\infty$ with the orientation induced from the orientation on $L$ allows us to define $L_\infty\natural \overline{L_\infty}$. Notice that $L_\infty\subseteq L_\infty\natural \overline{L_\infty}\subseteq U$. It mimics the behavior of $U$ in the following diffeomorphisms:  $L_\infty\natural (L_\infty\natural \overline{L_\infty})\cong_\text{diff} L_\infty\natural \overline{L_\infty}\cong_\text{diff} (L_\infty\natural \overline{L_\infty}) \natural \overline{L_\infty}$. \linebreak This new exotic $\mathbb{R}^4$ also shares the property with $U$ that it contains a smooth, compact, codimension-$0$ submanifold that does not smoothly embed into any smooth, closed, definite $4$-manifold (e.g. apply the argument from Theorem 9.4.3 of \cite{GS} to copies of $L$ and $\overline{L}$ in $L_\infty \natural \overline{L_\infty}$). In particular, $L_\infty\natural \overline{L_\infty}$ is certainly not diffeomorphic to $L_\infty$. On the other hand, it is possible that $L_\infty\natural \overline{L_\infty}$ still shares the property with $L_\infty$ that it admits a smooth embedding into some Stein surface.

\section{The Taylor invariant, the genus-rank function, and compact equivalence classes}

To illustrate the usefulness of the $\mathbb{R}^4$'s defined in the previous section, we consider their applications to constructing and manipulating smooth structures on open $4$-manifolds. In particular, we show that the genus function can be preserved while end-summing with ``arbitrarily large" $\mathbb{R}^4$'s, or at least only disrupted it in a small, controlled way. Conversely, we also see that this statement is false for the universal $\mathbb{R}^4$. Our observations will facilitate a procedure for independently controlling the Taylor invariant, the genus-rank function, and compact equivalence classes, often realizing infinitely many values of one invariant while keeping the other two fixed. 

We start with definitions of the genus and genus-rank functions, following those given in \cite{G5}.  
\begin{definition}
Let $X$ be a smooth, oriented $4$-manifold with torsion subgroup $T\subseteq H_2(X)$. The \textit{genus function} $G:H_2(X)\to\mathbb{Z}^{\geq 0}$ is the function assigning to each $\alpha\in H_2(X)$ the smallest possible genus of a smoothly embedded, compact, oriented surface $F$ representing $\alpha$. If $H_2(X)/T$ is a free abelian group, then the corresponding \textit{genus-rank function} is the function $\mathbb{Z}^{\geq 0}\to\mathbb{Z}^{\geq 0}\cup\{\infty\}$ that sends each $g\in\mathbb{Z}^{\geq 0}$ to the rank of the rational span in $H_2(X)/T$ of all $\alpha\in H_2(X)$ with $G(\alpha), |\alpha\cdot \alpha|\leq g$. 
\end{definition}
\noindent Notice that the genus-rank function is defined whenever $X$ is homeomorphic to the interior of a handlebody with only finitely many $3$-handles. The genus-rank function is an invariant of the diffeomorphism type of $X$, and changing the smooth structure on $X$ can effect this invariant. In particular, we will frequently be able to distinguish smooth structures on the same underlying topological $4$-manifold by the smallest $g\in\mathbb{Z}^{\geq 0}$ on which their genus-rank functions have a nonzero value. We call this integer the \textit{first characteristic genus} of the corresponding genus-rank function. (As in \cite{G5}, characteristic genera can also be defined more generally.) Although the genus-rank function is sometimes denoted by $\gamma$, we will reserve that symbol for the Taylor invariant.

 Stein surfaces come equipped with an adjunction inequality that provides lower bounds on the genus function (e.g. see Section 2 of \cite{G5}). If $X$ is a smooth, oriented, open $4$-manifold that admits a smooth, orientation-preserving embedding $i:X \hookrightarrow S$ for some Stein surface $S$ (oriented by its complex structure), then it inherits the adjunction inequality associated to $S$. More specifically, every $\alpha\in H_2(X)$ with $i_*(\alpha)\neq 0$ must satisfy
\[2G(\alpha)-2\geq \alpha\cdot\alpha+|\left<c_1(S), i_*(\alpha)\right>|.\]
It is precisely this observation that is used in \cite{G5} to produce point-wise upper bounds on the genus-rank function, and to subsequently distinguish large families of smooth structures on handlebody interiors. 

Our first task is to consider how end-summing with exotic $\mathbb{R}^4$'s effects the genus-rank function. Recall from the previous section that simultaneously end-summing a smooth, oriented, open $4$-manifold with a collection of oriented $\mathbb{R}^4$ defines a new smooth structure on the same underlying topological $4$-manifold. As above, the resulting smooth $4$-manifold is oriented by the orientation on this underlying topological $4$-manifold. The new genus-rank function is point-wise bounded below by the genus-rank function realized by the original smooth structure. The challenge seems to be finding upper bounds. To address this issue, the following lemma allows us to end-sum with certain large $\mathbb{R}^4$'s while preserving any adjunction inequalities that are inherited from embeddings into Stein surfaces. 

\begin{lemma}
\label{almoststein}
Suppose that $L$ is an oriented $\mathbb{R}^4$ constructed by Theorem \ref{inStein}. If $X_{\Sigma'}$  is a smooth, oriented $4$-manifold produced by simultaneously end-summing a smooth, oriented, open $4$-manifold $X_\Sigma$ with each oriented $\mathbb{R}^4$ from some (possibly infinite) collection $\{\natural_{n_i}{L} \mid n_i\in \mathbb{Z}^{> 0}\cup\{\infty\}\}_{i=1}^N$, then the genus function on $X_{\Sigma'}$ satisfies every adjunction inequality inherited from a smooth, orientation-preserving embedding of $X_\Sigma$ into an open Stein surface that sends each ray used in the end-sum operation to a smooth, properly embedded ray in the Stein surface.
\end{lemma}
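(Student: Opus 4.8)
The plan is to produce a single open Stein surface $\widehat{W}$ that contains a copy of $X_{\Sigma'}$ in such a way that the adjunction inequality coming from $\widehat{W}$, restricted to $H_2(X)$, is exactly the one that $X_\Sigma$ inherits from the given embedding. Fix a smooth, orientation-preserving embedding $j\colon X_\Sigma\hookrightarrow W$ into an open Stein surface $W$ that carries the rays $\rho_1,\dots,\rho_N$ used in the end-sum to smooth, properly embedded rays $\tilde\rho_i:=j(\rho_i)\subseteq W$ (this is exactly where the hypothesis on $j$ enters: end-summing along the $\tilde\rho_i$ requires them to be smooth, properly embedded rays in $W$). By Theorem~\ref{inStein}(\ref{embeddings}) there is a compact connected Stein surface $S$ and, for each $i$, a smooth, shaved, orientation-preserving embedding of $\natural_{n_i}L$ into an open Stein surface $S_{n_i}$; I would set $\widehat{W}:=W\,\natural\,\bigl(\natural_{i=1}^{N}S_{n_i}\bigr)$, the open Stein surface obtained from \cite{CG} by end-summing $W$ along the $\tilde\rho_i$ with the $S_{n_i}$ (along suitable rays in the $S_{n_i}$, chosen below). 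Since the Stein structure of $\widehat{W}$ from \cite{CG} agrees with that of $W$ away from a compact set, the inclusion $\iota\colon W\hookrightarrow\widehat{W}$ is orientation-preserving and satisfies $\iota^{*}c_1(\widehat{W})=c_1(W)$; and a Mayer--Vietoris computation, using that the connecting tubes $I\times\mathbb{R}^3$ and their intersections with the summands are contractible, shows $\iota_{*}\colon H_2(W)\to H_2(\widehat{W})$ is injective onto a direct summand.

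The crux is to upgrade $j$ to a smooth, orientation-preserving embedding $\Phi\colon X_{\Sigma'}\hookrightarrow\widehat{W}$ whose restriction to $X_\Sigma\subseteq X_{\Sigma'}$ is isotopic to $\iota\circ j$ — in other words, a naturality statement for end-summing with shaved $\mathbb{R}^4$'s, strengthening the observations recorded after the definition of shaved embeddings (which treat a pair of shaved $\mathbb{R}^4$'s) to the case of one general embedding and one shaved $\mathbb{R}^4$. Here is the mechanism I would use. Write $\natural_{n_i}L=\mathrm{int}(B_i)$ for the flat topological $4$-ball $B_i\subseteq S_{n_i}$ with smooth codimension-$1$ piece $U_i\subseteq\partial B_i$ furnished by the shaved embedding. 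Because the isotopy class of the smooth structure $\Sigma'$ depends only on the rays used in $X_\Sigma$ and not on those used in the $\natural_{n_i}L$'s (appendix of \cite{G1}), I may assume the ray in $\natural_{n_i}L$ defining the end-sum runs out to a point $p_i\in\mathrm{int}(U_i)$, hence is the initial segment of a properly embedded ray $\epsilon_i\subseteq S_{n_i}$ obtained by continuing through $p_i$ and on to the end of $S_{n_i}$. Taking $\epsilon_i$ on the $S_{n_i}$-side and $\tilde\rho_i$ on the $W$-side when building $\widehat{W}$, the half of a tubular neighborhood of $\epsilon_i$ lying inside $\natural_{n_i}L$, together with the remaining portion of that neighborhood and the connecting tube of $\widehat{W}$, assembles into a single tube $\cong I\times\mathbb{R}^3$ joining $j(X_\Sigma)\subseteq W$ (with a neighborhood of the far end of each $\tilde\rho_i$ deleted) to $\natural_{n_i}L\subseteq S_{n_i}$ (with a neighborhood of the far end of the chosen ray deleted); this is precisely a copy of $X_{\Sigma'}$ sitting inside $\widehat{W}$ and restricting to $j$ on $X_\Sigma$. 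The cases $N=\infty$ or $n_i=\infty$ are handled by performing all the end-sums simultaneously, exactly as in the discussion after the definition of shaved embeddings. I expect this bookkeeping with balls, collars, tubes, rays, and orientations to be the only genuinely delicate step; everything else is formal.

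With $\Phi$ constructed, I would finish by invoking the adjunction inequality for Stein surfaces. Since end-summing with $\mathbb{R}^4$'s does not alter the underlying topological $4$-manifold, identify $H_2(X_{\Sigma'})=H_2(X_\Sigma)=H_2(X)$ together with the intersection forms, and let $G_{\Sigma'}$ denote the genus function of $X_{\Sigma'}$. For $\alpha\in H_2(X)$ with $j_{*}(\alpha)\neq 0$, the restriction property gives $\Phi_{*}(\alpha)=\iota_{*}j_{*}(\alpha)\neq 0$, so the adjunction inequality for $\Phi\colon X_{\Sigma'}\hookrightarrow\widehat{W}$ yields
\[2G_{\Sigma'}(\alpha)-2\ \geq\ \alpha\cdot\alpha+\bigl|\langle c_1(\widehat{W}),\Phi_{*}(\alpha)\rangle\bigr|\ =\ \alpha\cdot\alpha+\bigl|\langle c_1(W),j_{*}(\alpha)\rangle\bigr|.\]
This is exactly the adjunction inequality that $X_\Sigma$ inherits from $j\colon X_\Sigma\hookrightarrow W$, and since $(W,j)$ was arbitrary subject to the hypothesis on the rays, the lemma follows.
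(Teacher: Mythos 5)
Your proposal is correct and follows essentially the same route as the paper's proof: place each $\natural_{n_i}L$ as a shaved $\mathbb{R}^4$ inside the Stein surface $S_{n_i}$ from property (\ref{embeddings}) of Theorem \ref{inStein}, end-sum the ambient Stein surface with the $S_{n_i}$ along the image rays, use the shaved condition to thread a ray/tube so that a copy of $X_{\Sigma'}$ sits inside the resulting Stein surface extending the given embedding, and then transfer the adjunction inequality via the splitting of $c_1$ and injectivity on second homology. The only difference is organizational: the paper proceeds one summand at a time by induction and settles the infinite cases by observing that every compact surface lies in a finite-stage sub-end-sum, whereas you perform all end-sums simultaneously, which is acceptable since the paper itself already treats simultaneous (even infinite) end-sums of Stein surfaces as Stein in Theorem \ref{inStein}(\ref{embeddings}).
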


\begin{proof}
 We proceed by induction on $N$. First consider the case when $N=1$, so that the smooth structure ${\Sigma'}$ on $X$ is obtained by end-summing $X_\Sigma$ with $\natural_{n} L$ for $n=n_1\in \mathbb{Z}^{> 0}\cup\{\infty\}$. Let $\gamma$ be the ray in $X_\Sigma$ used to perform this end-sum operation. Suppose that $i:X_\Sigma\hookrightarrow S$ is a smooth, orientation-preserving embedding of $X_\Sigma$ into an open Stein surface $S$ that sends $\gamma$ to a smooth, properly-embedded ray $\gamma'$ in $S$. Property (\ref{embeddings}) of Theorem \ref{inStein} ensures that $\natural_{n}{L}$ sits inside of an open, connected Stein surface $S_n$ as a shaved $\mathbb{R}^4$. Let $S'$ be obtained by end-summing $S$ and $S_n$ along the ray $\gamma'$ in $S$ and an arbitrary ray in $S_n$. Then $S'$ admits a Stein structure by \cite{CG}. Using the fact that $\natural_nL$ is a shaved $\mathbb{R}^4$ in $S_n$, we can find a smooth embedding $p:[0,1]\to S'$ whose intersection with $S\subseteq S'$ is the ray $\gamma'$ and whose intersection with $\natural_nL\subseteq S_n\subseteq S'$ is some smooth, properly embedded ray in $\natural_n L$. The union of $i(X_\Sigma)\subseteq S\subseteq S'$, $\natural_kL\subseteq S_n\subseteq S'$, and an open regular neighborhood of this path $p$ is a smoothly embedded copy of $X_{\Sigma'}$ in $S'$. The Stein structure on $S'$ can be chosen so that this embedding is orientation-preserving and so that $c_1(S')=c_1(S)\oplus c_1(S_n)\in H^2(S')=H^2(S)\oplus H^2(S_n)$. Thus, we've constructed a smooth, orientation-preserving embedding $i':X_{\Sigma'}\hookrightarrow S'$ that is an extension of the embedding $i:X_\Sigma\hookrightarrow S$. For any $\alpha\in H_2(X)$, observe that $H_2(S')=H_2(S)\oplus H_2(S_n)$ and 
\[\left<c_1(S'), i_*'(\alpha)\right>=\left<c_1(S)\oplus c_1(S_n), i_*(\alpha)\oplus 0\right>=\left<c_1(S), i_*(\alpha)\right>.\]
Also, $i_*(\alpha)=0$ if and only if $i_*'(\alpha)=0$.  It follows that the adjunction inequality on $X_\Sigma$ inherited from the embedding $i:X_\Sigma\hookrightarrow S$ is also inherited by $X_{\Sigma'}$ from the embedding $i':X_{\Sigma'}\hookrightarrow S'$. So the statement holds when $N=1$. Furthermore, if $i$ sends a collection of smooth, properly embedded rays in $X_\Sigma$ to smooth, properly embedded rays in $S$, then this collection can be smoothly isotoped in $X_\Sigma$ so that $i'$ sends each to a smooth, properly embedded ray in $S'$. Now induction provides the necessary result when $N$ is finite. If $N=\infty$, then the result follows because every smoothly embedded, compact, oriented surface in $X_{\Sigma'}$ is contained in some $X_{\Sigma''}$ sitting inside $X_{\Sigma'}$ that is obtained from $X_\Sigma$ by performing end-sums with only finitely many $\natural_{n_i} {L}$. 
\end{proof}

As an immediate corollary, we can preserve the adjunction inequality on Stein surfaces while end-summing with $\mathbb{R}^4$'s that realize arbitrarily large values of the Taylor invariant.
\begin{cor}
\label{stein}
There is a large, oriented $\mathbb{R}^4$, denoted by ${L}$, with the property that any smooth, oriented $4$-manifold $X_{\Sigma'}$ produced by simultaneously end-summing an open Stein surface $X_\Sigma$ with each $\mathbb{R}^4$ from some (possibly infinite) collection $\{\natural_{n_i}{L} \mid n_i\in \mathbb{Z}^{> 0}\cup\{\infty\}\}_{i=1}^N$ satisfies the adjunction inequality associated to this Stein surface.  Furthermore, ${L}$ can be chosen so that $\gamma(L)$ is an arbitrarily large integer, each $\gamma(\natural_n{L})<\infty$ for $n\in \mathbb{Z}^{\geq 0}$, and $\lim_{n\to \infty} \gamma(\natural_n {L})=\infty$. If $\gamma(L)\neq 0$, then $L$ does not admit a Stein structure. 
\end{cor}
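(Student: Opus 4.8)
The plan is to read this off directly from Lemma \ref{almoststein} together with Theorem \ref{inStein}, so the proof should be short. First I would take $L$ to be an oriented $\mathbb{R}^4$ furnished by Theorem \ref{inStein}, chosen so that $\gamma(L)$ is the prescribed (large) integer; since $\gamma(L)\neq 0$ this $L$ is large, which gives the opening assertion of the corollary. For the main statement, suppose $X_\Sigma$ is an open Stein surface and $X_{\Sigma'}$ is obtained by simultaneously end-summing $X_\Sigma$ with each $\mathbb{R}^4$ in some collection $\{\natural_{n_i}L\mid n_i\in\mathbb{Z}^{>0}\cup\{\infty\}\}_{i=1}^N$. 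I would then apply Lemma \ref{almoststein} to this end-sum, taking the smooth, orientation-preserving embedding into an open Stein surface to be the identity map $\mathrm{id}\colon X_\Sigma\hookrightarrow X_\Sigma$.

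The only point that needs checking is that this identity map satisfies the hypothesis of Lemma \ref{almoststein}, namely that it carries each ray used in the end-sum operation to a smooth, properly embedded ray in the target Stein surface. But by the definition of end-summing the rays used in $X_\Sigma$ are themselves smooth, properly embedded rays, so the identity sends them to such rays automatically. Hence Lemma \ref{almoststein} applies and the genus function of $X_{\Sigma'}$ satisfies the adjunction inequality inherited from $\mathrm{id}\colon X_\Sigma\hookrightarrow X_\Sigma$. Because $\mathrm{id}_*$ is the identity on $H_2$ and $c_1$ of the target is $c_1(X_\Sigma)$, this inherited inequality is literally the adjunction inequality associated to the Stein surface $X_\Sigma$, which is the desired conclusion.

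It remains to record the auxiliary claims. The statements that $\gamma(L)$ can be taken to be an arbitrarily large integer, that $\gamma(\natural_nL)<\infty$ for each $n\in\mathbb{Z}^{>0}$, and that $\gamma(\natural_nL)\to\infty$ are exactly property (\ref{inv}) of Theorem \ref{inStein}; the value $n=0$ is covered since $\natural_0L$ is the standard $\mathbb{R}^4$ and $\gamma$ of that vanishes. Finally, if $\gamma(L)\neq 0$ then Remark 4.5 of \cite{Tay} forces every handle decomposition of $L$ to contain infinitely many $3$-handles, while any open Stein surface admits a handle decomposition with no $3$- or $4$-handles; so $L$ admits no Stein structure. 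I do not expect any genuine obstacle here: the real content lives in Lemma \ref{almoststein}, and the only thing requiring attention is the (routine) verification that the identity embedding of a Stein surface is a legitimate input to that lemma and that the resulting adjunction inequality is the expected one.
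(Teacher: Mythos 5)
Your proposal is correct and follows essentially the same route as the paper: apply Lemma \ref{almoststein} with the identity embedding of $X_\Sigma$ into itself (noting it sends the rays used in the end-sum to smooth, properly embedded rays), then cite property (\ref{inv}) of Theorem \ref{inStein} for the Taylor invariant claims and Remark 4.5 of \cite{Tay} for the non-existence of a Stein structure when $\gamma(L)\neq 0$. No gaps.
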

\begin{proof}
The first claim follows from Lemma \ref{almoststein} because the identity on $X_\Sigma$ obviously sends every smooth, properly embedded ray to a smooth, properly embedded ray. Property (1) of Theorem \ref{inStein} guarantees the claims about the Taylor invariant of $L$. As above, Remark 4.5 of \cite{Tay} ensures that $L$ does not admit a Stein structure if $\gamma(L)\neq 0$. 
\end{proof}

The behavior of the exotic $\mathbb{R}^4$'s in the previous lemma is not what we should expect in general. Our next example illustrates the different ways that end-summing with large $\mathbb{R}^4$'s can effect the genus function.  
\begin{example}
\label{endsummingex}
Let $X$ be obtained by connect summing infinitely many copies of $S^2\times S^2$ and let $\Sigma_\text{std}$ denote the standard smooth structure on $X$. Arrange for $X$ to have a unique end by performing this sum in the appropriate way. End-summing $X_{\Sigma_\text{std}}$ with any exotic $\mathbb{R}^4$ will always produce the standard smooth structure again (e.g. see Remark 6.5 of \cite{Tay}), but Example 3.6 of \cite{G5} constructs uncountably many diffeomorphism classes of Casson smoothings on $X$. These diffeomorphism classes are distinguished by the corresponding genus-rank functions (or their relative versions). Any smooth $4$-manifold obtained by equipping $X$ with one of these Casson smoothings admits a smooth, orientation-preserving embedding into a Stein surface, and the inherited adjunction inequality ensures that it does not contain a smoothly embedded, homologically essential sphere with trivial square. Therefore, Lemma \ref{almoststein} allows us to repeatedly end-sum  each of these new smooth $4$-manifolds with large $\mathbb{R}^4$'s and never recover the standard smooth structure on $X$. These $\mathbb{R}^4$'s can be chosen to have arbitrarily large (finite) values of the Taylor invariant or to have infinite Taylor invariant. With a bit more care, we can arrange for each Casson smoothing to be Stein, each element of the standard basis for $H_2(X)$ to realize equality in the corresponding adjunction inequalities, and one Casson handle from infinitely many $S^2\times S^2$ summand in each smoothing to be some fixed Casson handle $CH_0$. Now end-summing with these exotic $\mathbb{R}^4$'s actually preserves the value of each genus function on this basis.  On the other hand, consider the $\mathbb{R}^4$'s produced by Theorem \ref{notinStein}. These each have finite Taylor invariant but form an exhaustion of $U$, and it follows from the construction of $U$ in \cite{FT} that we can choose some $E$ from this exhaustion so that end-summing any of these Stein-Casson smoothings on $X$ with $E$ allows any one of the $CH_0$'s to be  isotoped to a standard $2$-handle. This defines at least one standard copy of $S^2\times S^2 -\{\text{pt}\}$. End-summing instead with $\natural_n E$ for any $n\in\mathbb{Z}^{> 0}\cup\{\infty\}$, which still has finite Taylor invariant when $n$ is finite, produces smooth $4$-manifolds that each have at least $n$ standard copies of $S^2\times S^2-\{\text{pt}\}$. When $n=\infty$, we can even conclude that each resulting smooth structure lies in the same isotopy class as $\Sigma_{\text{std}}$ (e.g. apply the proof of Theorem 3 in \cite{FT}). Similarly, end-summing any of these Stein-Casson smoothings with $U$ also recovers the standard smooth structure. In reference to the discussion at the end of the previous section, it follows that $\natural_\infty E$ might be another good candidate to provide an alternate description of $U$. \end{example}

Next, we introduce compact equivalence classes and some relevant terminology. Like the Taylor invariant, compact equivalence classes are designed to measure the complexity of shaved $\mathbb{R}^4$'s in smooth $4$-manifolds. Our definition extends the one given in \cite{G1} to smooth $4$-manifolds that are not necessarily homeomorphic to $\mathbb{R}^4$. 
\begin{definition}
Let $\Sigma_1$ and $\Sigma_2$ be smooth structures on an oriented, topological $4$-manifold $X$.  We write $\Sigma_1\leq \Sigma_2$ if every flat, topological embedding $e_1:B^4\hookrightarrow X_{\Sigma_1}$ that is smooth around some $p_1\in\partial B^4$ corresponds to a flat, topological embedding $e_2:B^4\hookrightarrow X_{\Sigma_2}$ that is smooth around some $p_2\in \partial B^4$ with the property that interiors of $e_1(B^4)$ and $e_2(B^4)$ are orientation-preserving diffeomorphic. If $\Sigma_1\leq \Sigma_2\leq \Sigma_1$, then we say that $\Sigma_1$ and $\Sigma_2$ are compactly equivalent. 
\end{definition}
\noindent The relation $\leq$ descends to a partial ordering on compact equivalence classes. Similarly, the Taylor invariant is well-defined on compact equivalence classes on spin $4$-manifolds. More precisely, the Taylor invariant is monotonic with respect to the ordering defined by $\leq$ if the underlying topological $4$-manifold is spin. However, we will see that same value of the Taylor invariant can often be realized by distinct compact equivalence classes. 

Typical methods for changing the compact equivalence class of a given smooth structure require it to come with some embeddings into smooth, closed, definite $4$-manifolds. Our next definition introduces one of the more general version of this condition. 
\begin{definition}
Let $\Sigma$ be a smooth structure on an oriented, topological $4$-manifold $X$. We say that $\Sigma$ is \textit{compactly positive definite} (resp. \textit{compactly negative definite}) if every $\Sigma$-smooth, compact, codimension-$0$ submanifold of $X$ admits a $\Sigma$-smooth, orientation-preserving embedding into a smooth, closed, simply connected, positive definite (resp. negative definite) $4$-manifold. 
\end{definition}
\noindent On open, topological $4$-manifolds admitting a smooth structure that is either compactly negative definite or compactly positive definite, there are at least two essentially different ways to end-sum with exotic $\mathbb{R}^4$'s to produce uncountably many new diffeomorphism types. As we will see below, these two methods can be differentiated by whether or not the compact equivalence class is the same for all resulting smooth structures. 

In our first application of Lemma \ref{almoststein}, we end-sum with exotic $\mathbb{R}^4$'s to manipulate the Taylor invariant while maintaining control over the genus-rank function. Conversely, we also use methods from \cite{G5} to modify the genus-rank function without changing the Taylor invariant.  
\begin{theorem}
\label{TG}
Suppose that $X$ is a open, connected, topological $4$-manifold that is the interior of an oriented handlebody $H$ with all indices $\leq 2$, $0<\beta_2(X)<\infty$, and $w_2(X)\in H^2(X, \mathbb{Z}_2)$ is compactly supported. Then there are smooth structures on $X$ that realize infinitely many (arbitrarily large and finite) values of Taylor invariant but all produce the same genus-rank function, and infinitely many genus-rank functions occur in this way. If $X$ is spin, then there are also smooth structure on $X$ that produce infinitely many genus-rank functions but all realize the same arbitrarily large (finite) value of the Taylor invariant.
\end{theorem}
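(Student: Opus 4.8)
The plan is to build both families from the Stein--Casson smoothings of $X$ supplied by \cite{G5}, end-sum them with the $\mathbb{R}^4$'s $L$ of Theorem~\ref{inStein}, and then use Corollary~\ref{stein} to pin down the genus-rank function while reading the Taylor invariant off the resulting embeddings into Stein surfaces and closed spin manifolds. Concretely, the first input is \cite{G5} (cf.\ Lemma~3.2 and Theorem~3.4): because $X$ is the interior of a handlebody with all indices $\leq 2$, $0<\beta_2(X)<\infty$, and $w_2(X)$ is compactly supported, there are infinitely many Stein--Casson smoothings $\Sigma^{(1)},\Sigma^{(2)},\dots$ on $X$ with pairwise distinct genus-rank functions $\phi^{(1)},\phi^{(2)},\dots$, each $\phi^{(k)}$ being exactly the pointwise upper bound forced by the adjunction inequality of the open Stein surface $X_{\Sigma^{(k)}}$. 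We also fix an $L$ from Theorem~\ref{inStein} whose Taylor invariant is a large integer.

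For the first assertion, fix $k$ and, for each $n\in\mathbb Z^{>0}$, let $\Sigma^{(k)}_n$ be obtained by end-summing $X_{\Sigma^{(k)}}$ with $\natural_n L$ (any rays). By Corollary~\ref{stein} the genus function of $X_{\Sigma^{(k)}_n}$ still satisfies the adjunction inequality of $X_{\Sigma^{(k)}}$, so its genus-rank function is $\leq\phi^{(k)}$; since end-summing only enlarges the collection of smoothly embedded surfaces it is also $\geq\phi^{(k)}$, hence equal to $\phi^{(k)}$ for all $n$. For the Taylor invariant, choose a smoothly embedded closed surface $F_0\subseteq X$ dual to $w_2(X)$, arranged disjoint from the end-sum regions, so that $\natural_n L$ is a shaved $\mathbb R^4$ in the spin manifold $X_{\Sigma^{(k)}_n}-F_0$. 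Monotonicity of $\gamma$ under shaved embeddings then gives
\[\gamma\bigl(X_{\Sigma^{(k)}_n}\bigr)\ \geq\ \gamma\bigl(X_{\Sigma^{(k)}_n}-F_0\bigr)-\dim_{\mathbb Z_2}H_1(F_0;\mathbb Z_2)\ \geq\ \gamma(\natural_n L)-\dim_{\mathbb Z_2}H_1(F_0;\mathbb Z_2),\]
which tends to $\infty$ by property~(\ref{inv}) of Theorem~\ref{inStein}; finiteness of each $\gamma(X_{\Sigma^{(k)}_n})$ follows from the embedding of $X_{\Sigma^{(k)}_n}$ into the finite-type Stein surface $X_{\Sigma^{(k)}}\natural S_n$ (with $S_n$ as in property~(\ref{embeddings})) by the reasoning of property~(\ref{inv}), using, since $X$ need not be spin, the reformulation of $\gamma$ in terms of surfaces dual to $w_2$. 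Thus $\{\Sigma^{(k)}_n\}_{n\geq1}$ realizes infinitely many arbitrarily large finite Taylor invariants with the single genus-rank function $\phi^{(k)}$, and letting $k$ vary gives infinitely many genus-rank functions.

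For the second assertion, assume $X$ is spin and let $\Theta^{(k)}$ be obtained by end-summing $X_{\Sigma^{(k)}}$ with a single copy of $L$. Exactly as above, $X_{\Theta^{(k)}}$ has genus-rank function $\phi^{(k)}$, so infinitely many genus-rank functions occur. Since $X$ is spin, $X_{\Theta^{(k)}}$ is spin and contains $L$ as a shaved $\mathbb R^4$, so $\gamma(X_{\Theta^{(k)}})\geq\gamma(L)$. For a $k$-independent upper bound, use that each $X_{\Sigma^{(k)}}$ is a Stein--Casson smoothing of the \emph{fixed} $X$: every Casson handle in it embeds smoothly in a standard $2$-handle, so $X_{\Sigma^{(k)}}$ embeds in a closed spin $4$-manifold with hyperbolic form whose second Betti number depends only on $\beta_2(X)$, and combining this with the shaved embedding of $L$ into $\#g\,S^2\times S^2$ from the proof of Theorem~\ref{inStein} bounds $\gamma(X_{\Theta^{(k)}})$ above by a constant $C$ independent of $k$. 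Hence all the integers $\gamma(X_{\Theta^{(k)}})$ lie in $\{\gamma(L),\dots,C\}$, so a pigeonhole argument yields infinitely many $k$ for which $X_{\Theta^{(k)}}$ share one common Taylor invariant $\gamma_\ast\geq\gamma(L)$; choosing $L$ with $\gamma(L)$ (equivalently $g$) large makes $\gamma_\ast$ arbitrarily large.

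I expect the genuine difficulty to be the Taylor-invariant bookkeeping rather than the genus-rank control: the lower bounds are immediate from monotonicity and property~(\ref{inv}) of Theorem~\ref{inStein}, and the genus-rank function is pinned by Corollary~\ref{stein} together with the fact that end-summing only lowers the genus function, but establishing finiteness of $\gamma$ for the end-summed smoothings, and especially a $k$-independent upper bound for the second assertion, forces one to handle the non-compact Casson-handle structure carefully — in effect re-running, in this slightly more general setting, the embedding arguments behind property~(\ref{inv}) of Theorem~\ref{inStein} and citing the relevant finiteness facts from \cite{Tay}.
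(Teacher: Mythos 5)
Your overall strategy is the same as the paper's (end-sum the \cite{G5} smoothings with $\natural_n L$, use the adjunction-preservation statement to control genus-rank, use shaved embeddings and monotonicity plus property~(\ref{inv}) of Theorem~\ref{inStein} for the Taylor lower bounds, and pigeonhole for the spin case), but there is a genuine gap at the heart of your genus-rank control. You assert that each $\phi^{(k)}$ is ``exactly the pointwise upper bound forced by the adjunction inequality'' of $X_{\Sigma^{(k)}}$, and you use this twice: to conclude that the genus-rank function of $X_{\Sigma^{(k)}_n}$ equals $\phi^{(k)}$ for every $n$, and again for $X_{\Theta^{(k)}}$ in the spin case. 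Theorem 3.4 of \cite{G5} does not provide this saturation: its smoothings are distinguished by (first characteristic genera of) their genus-rank functions, with the adjunction inequalities giving only upper bounds; there is no guarantee that surfaces realizing those bounds exist (exactness is known in special cases such as $S^2\times\mathbb{R}^2$, cf.\ Example~\ref{bundle}, but not in the generality of the hypotheses here). Without it, ``$\leq\phi^{(k)}$ and $\geq\phi^{(k)}$, hence $=\phi^{(k)}$'' collapses, and both the claim that the genus-rank function is independent of $n$ and the claim that the $X_{\Theta^{(k)}}$ realize infinitely many distinct genus-rank functions are unproved. The paper deliberately avoids this: it only arranges that the inherited adjunction inequalities kill the genus-rank function below the first characteristic genus $g_a$ while the pointwise lower bound keeps it nonzero at $g_a$ (so different $a$ give different functions), and then uses $\beta_2(X)<\infty$ to see that for fixed $a$ only finitely many genus-rank functions can occur among the end-summed smoothings, finishing by a counting/pigeonhole argument. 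Your argument can be repaired along exactly these lines, but as written the key equality is unjustified.

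Two smaller points. First, your finiteness argument for $\gamma(X_{\Sigma^{(k)}_n})$ in the non-spin-allowed case is not ``the reasoning of property~(\ref{inv})'': that reasoning bounds $\gamma$ by embedding into a \emph{closed spin} manifold with hyperbolic form, which an open Stein surface is not. To make your route work you must invoke Taylor's finiteness theorem (Theorem 6.4 of \cite{Tay}) for the Stein surface $X_{\Sigma^{(k)}}\natural S_n$ (no $3$-handles, finite $\beta_2$, $w_2$ compactly supported) and check that your embedding carries duals of $w_2$ to duals of $w_2$ (which does hold here since $S_n$ is spin); the paper instead applies Theorem 6.4 to $X_{\Sigma_\text{std}}$ and embeds $X_{\Sigma_\text{std}}\natural(\natural_n L)$ into the resulting manifolds $M(\rho)$, which keeps the handle-theoretic hypotheses cleanest. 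Second, your $k$-independent upper bound in the spin case via the double of $H$ connect-summed with $\#g\,S^2\times S^2$ is a legitimate alternative to the paper's $M(\rho)$ argument and is fine there, but note that this shortcut is special to the spin case; the non-spin part of the statement still needs the $w_2$-dual bookkeeping above.
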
 
\noindent Thus, we can independently control the Taylor invariant and genus-rank function under sufficiently nice conditions. Using techniques from the proof of Lemma 7.3 in \cite{G5}, we can sometimes realize each of these pairs by uncountably many distinct compact equivalence classes:
\begin{addendum}
\label{ad}
If the standard smooth structure that $X$ inherits as a handlebody interior is compactly positive definite, then each pair of Taylor invariant and genus-rank function from this theorem is realized by smooth structures on $X$ whose compact equivalence classes have the order type of $\mathbb{R}^{\geq 0}$. 
\end{addendum}

\begin{proof}[Proof of Theorem \ref{TG} and Addendum \ref{ad}]
These smooth structures will be constructed by end-\linebreak summing Casson smoothings on $X$ from \cite{G5} with $\mathbb{R}^4$'s from Theorem \ref{inStein}. In order to do this while sometimes realizing uncountably many compact equivalence classes, we must first consider these $\mathbb{R}^4$'s more carefully. Fix some $L$ from Theorem \ref{inStein}, a positive real number $t_0$, and a homeomorphism $h:\mathbb{R}^4\to {L}$. Apply Quinn's Stable Homeomorphism Theorem (Theorem 8.1A of \cite{FQ}) to isotope $h$ so that it is smooth on a neighborhood of the positive real line in $ \mathbb{R}^4$. For each $t\in\mathbb{R}^{\geq 0}$, let $B_t\subseteq \mathbb{R}^4$  denote the standard ball of radius $t_0+t$ and let ${L}_t$ denote the interior of $h(B_t)$. We can chose $t_0$ large enough that $K_1\subset {L}_{t_0}\subset {L}$, where $K_1$ is the compact set from property (\ref{end}) of Theorem \ref{inStein}. It is easy to conclude that each $L_t$ is an oriented $\mathbb{R}^4$ that also satisfies properties (1)-(4) of Theorem \ref{inStein}.  In particular, Lemma \ref{almoststein} holds with ${L}$ replaced by any ${{L}}_t$ because it only required property (2) from this theorem. We can now freely apply the properties ensured by Theorem \ref{inStein} and the conclusion of Lemma \ref{almoststein} to $L_t$ throughout this proof. 
 
We proceed to define the necessary smooth structures on $X$. First, equip $X$ with the orientation it inherits as a handlebody interior. Apply the hypothesis that $\beta_2(X)\neq 0$ to construct a collection $\{\Sigma_a\mid a\in\mathbb{Z}^{> 0}\}$ of Casson smoothings on $X$ corresponding to the handle decomposition from $H$ using the procedure from the proof of Theorem 3.4 of \cite{G5}. This procedure inductively constructs each $\Sigma_a$ so that $X_{\Sigma_a}$ inherits enough adjunction inequalities to ensure that the first characteristic genus of its genus-rank function is strictly larger than the first characteristic genus of the genus-rank function on any $X_{\Sigma_{a'}}$ with $a'<a$. Let $\Sigma_{\text{std}}$ denote the standard smooth structure that $X$ inherits a handlebody interior and fix a smooth, properly embedded ray $\gamma$ in $X_{\Sigma_{\text{std}}}$ that is contained in the interior of some $0$-handle from $H$. Also fix the corresponding smooth, properly embedded ray $\gamma_a$ in each $X_{\Sigma_a}$.  For each $a, n\in\mathbb{Z}^{> 0}$ and $t\in\mathbb{R}^{\geq 0}$, define $\Sigma_{a, n, t}$ to be the smooth structure on $X$ obtained by end-summing $X_{\Sigma_a}$ with $\natural_n {L}_t$ along the ray $\gamma_a$ in $X_{\Sigma_a}$ and an arbitrary ray in $\natural_n L_t$. It now suffices to verify that these smooth structures realize the necessary invariants. 

We start by considering the genus-rank functions that are produced by these new smoothings. For each $a\in\mathbb{Z}^{> 0}$, let $g_a\in\mathbb{Z}^{\geq 0}$ be the first characteristic genus of the genus-rank function on $X_{\Sigma_a}$. As described above, any $\Sigma_a$ and $\Sigma_{a'}$ for $a>a'$ can be distinguished because $X_{\Sigma_a}$ inherits enough adjunction inequalities from smooth, orientation-preserving embeddings into Stein-Casson smoothings on $X$ to ensure that $g_{a}>g_{a'}$. Each of these embeddings for a given $X_{\Sigma_a}$ restricts to the identity away from its Casson handles, so that each sends $\gamma_a$ to some smooth, properly-embedded ray in the corresponding Stein-Casson smoothing on $X$. Hence, Lemma \ref{almoststein} ensures that each $X_{\Sigma_{a, n, t}}$ inherits these same adjunction inequalities as $X_{\Sigma_a}$. So the genus-rank function on each $X_{\Sigma_{a, n, t}}$ sends $g_{a'}$ to zero if $a'<a$. On the other hand, the genus-rank function on each $X_{\Sigma_{a, n, t}}$ is point-wise bounded below by the genus-rank function on $X_{\Sigma_a}$. In particular, the genus-rank function on each $X_{\Sigma_{a, n, t}}$ sends $g_a$ to a nonzero integer.  Therefore, any $\Sigma_{a, n, t}$ and $\Sigma_{a', n', t'}$ produce distinct genus-rank functions if $a\neq a'$. In addition, these point-wise lower bounds coupled with the hypothesis that $\beta_2(X)<\infty$ guarantee that  each collection $\{\Sigma_{a, n, t}\mid n \in\mathbb{Z}^{> 0}, t\in\mathbb{R}^{\geq 0}\}$ for a fixed $a\in \mathbb{Z}^{> 0}$ produces only finitely many distinct genus-rank functions. 

Next, we turn to the Taylor invariant. Our goal is to find upper and lower bounds on each $\gamma(X_{\Sigma_{a, n, t}})$ that only depend on $n$, although our lower bounds will sometimes need to depend on $a$ as well. For each $n\in\mathbb{Z}^{> 0}$, consider the smooth structure $\Sigma_n'$ on $X$ that is obtained by end-summing $X_{\Sigma_\text{std}}$ with $\natural_n L$ along the ray $\gamma$ in $X_{\Sigma_\text{std}}$ and an arbitrary ray in $\natural_n L$. We will use the proof of Theorem 6.4\linebreak in \cite{Tay} to verify that each $\gamma(X_{\Sigma_{n}'})<\infty$. This theorem applies to $X_{\Sigma_{\text{std}}}$ because $H$ has no $3$-handles, $\beta_2(X)<\infty$, and $w_2(X)$ is compactly supported. Its proof produces smooth $4$-manifolds $M(\rho)$ for $\rho\in\mathbb{Z}^{> 0}$ that each have finite Taylor invariant. It is straight-forward to construct a smooth embedding of any $X_{\Sigma_n'}$ into some $M(\rho)$, sending  duals of $w_2(X)$ to duals of $w_2(M(\rho))$. As needed, this means that each $\gamma(X_{\Sigma_n'})\leq \gamma(M(\rho))<\infty$ for some $\rho\in\mathbb{Z}^{>0}$. Next, recall that every Casson handle admits a smooth, orientation-preserving embedding into the standard $2$-handle that preserves attaching regions. So each $X_{\Sigma_{a}}$ admits a smooth, orientation-preserving embedding into $X_{\Sigma_\text{std}}$ that sends $\gamma_a$ to $\gamma$. It follows that each $X_{\Sigma_{a, n, t}}$ admits a smooth, orientation-preserving embedding into $X_{\Sigma_n'}$ that maps duals of $w_2(X)$ to duals of $w_2(X)$. Therefore, each $\gamma(X_{\Sigma_{a, n, t}})\leq \gamma(X_{\Sigma_{n}'})<\infty$. In particular, the Taylor invariant realized by any given $\Sigma_{a, n, t}$ has a finite upper bound that is only dependent on the parameter $n\in\mathbb{Z}^{> 0}$. On the other hand, the Taylor invariant realized by each $\Sigma_{a, n, t}$ is bounded below by $\gamma(\natural_n {L}_{t_0})-d_a$, where $d_a$ is equal to the $\mathbb{Z}_2$-dimension of $H_1(F, \mathbb{Z}_2)$ for some smoothly embedded, closed surface $F$ in $X_{\Sigma_a}$ that is dual to $w_2(X)$. This lower bound depends on the choice of both $a,n \in\mathbb{Z}^{> 0}$, but it can be replaced by the lower bound $\gamma(\natural_n {L}_{t_0})$ when $X$ is spin. Property (\ref{inv}) of Theorem \ref{inStein} ensures that both lower bounds grow to be arbitrarily large if we let $n$ tend to infinity while fixing $a$. So we can conclude that any collection $\{\Sigma_{a, n, t}\mid t\in\mathbb{R}^{\geq 0}\}$ for a fixed $a, n\in\mathbb{Z}^{> 0}$ realizes only finitely many values of the Taylor invariant, and the smallest Taylor invariant realized by this collection grows to be arbitrarily large (but finite) as $n$ tends to infinity for a fixed $a$. If $X$ is spin, then any $\{\Sigma_{a, n, t}\mid a\in\mathbb{Z}^{>0} , t\in\mathbb{R}^{\geq 0}\}$ for a fixed $n\in\mathbb{Z}^{>0}$ also realizes only finitely many values of the Taylor invariant and the smallest Taylor invariant realized by this collection still grows to be arbitrarily large (but finite) as $n$ tends to infinity. 
 
The rest of this proof will be a counting argument that applies the observations from the previous two paragraphs to verify that the necessary invariants have indeed been realized, starting with the claims made in the original statement. For any fixed $a\in\mathbb{Z}^{> 0}, t\in\mathbb{R}^{\geq 0}$, the collection $\{\Sigma_{a, n, t}\mid n\in\mathbb{Z}^{> 0}\}$ produces only finitely many distinct genus-rank functions. So each contains smooth structures that realize infinitely many (arbitrarily large  and finite) values of the Taylor invariant but all produce the same genus-rank function. Since infinitely many distinct  genus-rank functions are produced in this way, the first sentence holds. Next, suppose that $X$ is spin. For any fixed $n\in\mathbb{Z}^{>0},t\in\mathbb{R}^{\geq 0}$, the collection $\{\Sigma_{a, n, t}\mid a\in\mathbb{Z}^{> 0}\}$ realizes only finitely many values of the Taylor invariant. So each contains smooth structures that produce infinitely many distinct genus-rank functions but all realize the same value of the Taylor invariant. Arbitrarily large (finite) values of the Taylor invariant are realized in this way. Therefore, the second sentence is also true. We have completed the proof of the original statement.   
 
Before proceeding to the addendum, we analyze the compact equivalence classes that are realized by these new smooth structures. For the remainder of this proof, we suppose that $\Sigma_\text{std}$ is compactly positive definite. We claim that $\Sigma_{a, n, t'}\leq \Sigma_{a, n, t}$ if and only if $t'\leq t$. Our argument will follow the proof of Lemma 7.3 in \cite{G5}. One direction is clear. To see the other direction, suppose to the contrary that some $\Sigma_{a, n, t'}\leq \Sigma_{a, n, t}$ for $t'>t$.  In particular, this means that $\natural_n{L}_t$ embeds into  $X_{\Sigma_{a, n, t}}$ by a smooth, orientation-preserving embedding whose image has compact closure. Then there exists smooth, compact, codimension-$0$ submanifolds $K_1\subseteq X_{\Sigma_a}$ and $K_2\subseteq \natural_n {L}_t$ with the property that $\natural_n{L_t}$ admits a smooth, orientation-preserving embedding into the result $K$ of boundary summing $K_1$ and $K_2$. Recall from above that each $X_{\Sigma_a}$ admits a smooth, orientation-preserving embedding into $X_{\Sigma_\text{std}}$, ensuring that $K_1\subseteq X_{\Sigma_a}$ admits a smooth, orientation-preserving embedding into some smooth, closed, simply-connected, positive definite $4$-manifold $P$. It then follows that $K$ admits a smooth, orientation-preserving embedding into $P\#(\natural_n {L}_t)$. Putting it all together, we've shown that $\natural_n{L_t}$ admits a smooth, orientation-preserving embedding into a smooth, compact, codimension-$0$ submanifold of $P\#(\natural_n{L_t})$. After reversing orientation, a standard argument (e.g. see \cite{D}) applying Taubes's extension of Donaldson Theory in \cite{Tau} and property (\ref{end}) from Theorem \ref{inStein} produces the necessary contradiction. So our claim holds. 

We are now prepared to find the smooth structures described in the addendum. Since any collection $\{\Sigma_{a, n, t}\mid t\in \mathbb{R}^{\geq 0}\}$ for a fixed $a, n\in\mathbb{Z}^{> 0}$ realizes only finitely many values of the Taylor invariant and produces only finitely many distinct genus-rank functions, each has a subcollection $\mathcal{C}_{a, n}$ of smooth structures that all realize the same genus-rank function and Taylor invariant but represent uncountably many distinct compact equivalences classes. Furthermore, each $\mathcal{C}_{a, n}$ can be chosen so that these compact equivalence classes have the order type of $\mathbb{R}^{\geq 0}$. For any fixed $a\in\mathbb{Z}^{>0}$, the subcollections $\mathcal{C}_{a, n}$ realize infinitely many (arbitrarily large and finite) values of the Taylor invariant as $n$ tends to infinity but only finitely many distinct genus-rank functions. So there are infinitely many distinct genus-rank functions that are each produced by smooth structures on $X$ realizing infinitely many (arbitrarily large and finite) values of the Taylor invariant, with each pair occurring for compact equivalence classes with the order type of $\mathbb{R}^{\geq 0}$. Suppose next that $X$ is spin. For each fixed $n\in\mathbb{Z}^{>0}$, the subcollections $\mathcal{C}_{a, n}$ realize infinitely many distinct genus-rank functions as $a$ tends to infinity but only finitely many values of the Taylor invariant.  Hence, some arbitrarily large (finite) value of the Taylor invariant is realized by smooth structures on $X$ that produce infinitely many distinct genus-rank functions, and again each pair occurs for compact equivalence classes with the order type of $\mathbb{R}^{\geq 0}$. This completes the proof of the addendum. 
\end{proof}

In the preceding proof, we ensured the necessary invariants were realized by producing sufficient bounds. As the next example illustrates, we can sometimes compute these invariants more explicitly. 
\begin{example}
\label{bundle}
 Let $X=S^2\times \mathbb{R}^2$. Following Theorem 3.10 in \cite{G5}, we can choose Stein-Casson smoothings $\{\Sigma_a\mid a\in\mathbb{Z}^{> 0}\}$ on $X$ so that that the first characteristic genus of the genus-rank function on each $X_{\Sigma_a}$ is $a$. Furthermore, the existence of a smoothly embedded, homologically essential surface of genus strictly smaller than $a$ in some $X_{\Sigma_a}$ would violate the adjunction inequality associated to the Stein structure on $X_{\Sigma_a}$. Now construct $\Sigma_{a, n, t}$ for each $a, n\in\mathbb{Z}^{> 0}, t\in\mathbb{R}^{\geq 0}$ as in the preceding proof. The first characteristic genus of the genus-rank function on each $X_{\Sigma_{a, n, t}}$ is equal to $a$ because each still satisfies this adjunction inequality. So the genus-rank function on each $X_{\Sigma_{a, n, t}}$ is completely determined by the value of $a$, and different values of $a$ will produce distinct genus-rank functions.  Observe next that each $X_{\Sigma_{a, n, t}}$ has the peculiar property it contains a smoothly embedded copy of $\natural_n{L}_t$ and also admits a smooth embedding into $\natural_n{L}_t$. This means that each $\gamma(X_{\Sigma_{a, n, t}})=\gamma(\natural_n{L}_t)$. Then there is some $t_n\in\mathbb{R}^{\geq 0}$ for each $n\in\mathbb{Z}^{>0}$ such that each $\gamma(X_{\Sigma_{a, n, t}})=\gamma(\natural_n{L})$ for all $t\geq t_n$, ensuring that the Taylor invariant realized by any $\Sigma_{a, n, t}$ for $t\geq t_n$ is completely determined by $n$. As in the preceding proof, the compact equivalence class of any $\Sigma_{a, n, t}$ and $\Sigma_{a, n, t'}$ are the same if and only if $t=t'$. In summary, we've found infinitely many genus-rank functions that are each produced by smooth structures on $X$ realizing the same infinitely many (arbitrarily large and finite) values of the Taylor invariant, and each pair occurs for uncountably many compact equivalence classes. Notice that we have significantly more control in this situation than we did in the preceding proof.  
\end{example}

The techniques used in the proof of Theorem \ref{TG} can be adapted to construct smooth structures that realize infinite Taylor invariant.
\begin{theorem}
\label{inf}
Suppose that $X$ is a open, connected, topological $4$-manifold that is the interior of an oriented handlebody $H$ with all indices $\leq 2$, $\beta_2(X)\neq 0$, and $w_2(X)\in H^2(X, \mathbb{Z}_2)$ is compactly supported.  Then there are smooth structures on $X$ that produce infinitely many genus-rank functions but all realize infinite Taylor invariant. If $\beta_2(X)=\infty$, then uncountably many genus-rank functions occur in this way. \end{theorem}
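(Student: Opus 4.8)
The plan is to mirror the proof of Theorem \ref{TG}, replacing the finite end-sums $\natural_n L$ by the single infinite end-sum $\natural_\infty L$ and simply deleting the part of that argument which produced \emph{finite} upper bounds on the Taylor invariant. First I would fix an oriented $\mathbb{R}^4$ $L$ from Theorem \ref{inStein} and recall from Corollary \ref{notU} (which only uses properties (1) and (2)) that $L_\infty = \natural_\infty L$ has $\gamma(L_\infty)=\infty$ and still admits a shaved, orientation-preserving embedding into an open Stein surface, so that Lemma \ref{almoststein} remains valid with the index $n_i=\infty$. Orient $X$ as a handlebody interior and, using $\beta_2(X)\neq 0$ together with the absence of $3$-handles in $H$, invoke the procedure behind Theorem 3.4 of \cite{G5} to produce Casson smoothings $\{\Sigma_a\}$ on $X$ corresponding to $H$: when $\beta_2(X)$ is finite these may be indexed by $a\in\mathbb{Z}^{>0}$ with strictly increasing first characteristic genus, and when $\beta_2(X)=\infty$ the same methods (cf.\ Example 3.6 of \cite{G5}) yield a family realizing uncountably many distinct genus-rank functions; in all cases each $X_{\Sigma_a}$ inherits from smooth, orientation-preserving embeddings into Stein--Casson smoothings on $X$ the adjunction inequalities that distinguish these genus-rank functions, and each such embedding restricts to the identity away from the Casson handles. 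Fix a smooth, properly embedded ray $\gamma_a$ in $X_{\Sigma_a}$ lying in a $0$-handle of $H$, and let $\Sigma_a'$ be obtained by end-summing $X_{\Sigma_a}$ with $L_\infty$ along $\gamma_a$ and an arbitrary ray in $L_\infty$. These $\{\Sigma_a'\}$ will be the required smooth structures on $X$.

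Next I would check that the genus-rank functions are retained. Since each embedding $X_{\Sigma_a}\hookrightarrow(\text{Stein--Casson smoothing})$ is the identity near the $0$-handle containing $\gamma_a$, it sends $\gamma_a$ to a smooth, properly embedded ray, so Lemma \ref{almoststein}, applied with $n_i=\infty$, guarantees that $X_{\Sigma_a'}$ inherits exactly the same adjunction inequalities as $X_{\Sigma_a}$. These inequalities give pointwise upper bounds on the genus-rank function, while end-summing can only raise the genus-rank function pointwise; as the genus-rank function of $X_{\Sigma_a}$ realizes these upper bounds by construction in \cite{G5}, the genus-rank function of $X_{\Sigma_a'}$ is squeezed between the two and therefore equals that of $X_{\Sigma_a}$. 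Hence the $\{\Sigma_a'\}$ realize infinitely many (and, when $\beta_2(X)=\infty$, uncountably many) distinct genus-rank functions.

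It remains to show $\gamma(X_{\Sigma_a'})=\infty$ for every $a$. Here I would run the lower-bound argument from the proof of Theorem \ref{TG} with $n=\infty$: using that $w_2(X)$ is compactly supported, choose a smoothly embedded, closed surface $F$ in $X_{\Sigma_a}$ dual to $w_2(X)$, arranged disjoint from $\gamma_a$ and from the Casson handles, so that $F$ persists in $X_{\Sigma_a'}$ and is disjoint from the copy of $L_\infty$ that was summed in; then $L_\infty$ sits in the spin $4$-manifold $X_{\Sigma_a'}-F$ as a shaved $\mathbb{R}^4$, so that $\gamma(X_{\Sigma_a'}-F)\geq\gamma(L_\infty)=\infty$ and consequently $\gamma(X_{\Sigma_a'})\geq\gamma(X_{\Sigma_a'}-F)-\dim_{\mathbb{Z}_2}H_1(F,\mathbb{Z}_2)=\infty$ from the definition of the Taylor invariant in the non-spin case (and directly, with $F$ empty, when $X$ is spin). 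I expect the only real points requiring care to be precisely the two bookkeeping issues inherited from Theorem \ref{TG}: confirming that Lemma \ref{almoststein} genuinely protects the adjunction inequalities through an \emph{infinite} iterated end-sum, and that the infinite Taylor invariant of $L_\infty$ survives the passage through the non-spin correction term $\dim_{\mathbb{Z}_2}H_1(F,\mathbb{Z}_2)$ — both of which are consequences of machinery already in place (Lemma \ref{almoststein} as stated allows $n_i=\infty$, and $\gamma(L_\infty)=\infty$ is Corollary \ref{notU}), so the argument is essentially the proof of Theorem \ref{TG} with its upper-bound analysis of the Taylor invariant removed.
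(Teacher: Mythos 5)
Your proposal is correct and follows essentially the same route as the paper: take the Casson smoothings from Theorem 3.4 (and, for $\beta_2(X)=\infty$, Theorem 3.5) of \cite{G5}, end-sum each with $\natural_\infty L$ for $L$ from Theorem \ref{inStein}, use Lemma \ref{almoststein} to keep the inherited adjunction inequalities, and get infinite Taylor invariant from $\gamma(\natural_\infty L)=\infty$ together with the same spin/characteristic-surface lower-bound argument as in Theorem \ref{TG} (the paper's proof is just a terser version of this). The one caveat is your claim that the new genus-rank functions \emph{equal} those of the original Casson smoothings because the latter ``realize the upper bounds'' from the adjunction inequalities -- this exactness is not established in general and is not needed: as in the paper, the inherited adjunction upper bounds together with the pointwise lower bounds coming from the inclusion $X_{\Sigma_a}\subseteq X_{\Sigma_a'}$ already suffice to distinguish the resulting genus-rank functions.
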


\begin{proof}
It follows immediately from Theorem 3.4 of \cite{G5} that there are Casson smoothings on $X$ producing infinitely many distinct genus-rank functions, and we can apply Theorem 3.5 of \cite{G5} to arrange for the cardinality to be uncountable when $\beta_2(X)=\infty$. Let ${L}$ be a large, oriented $\mathbb{R}^4$ from from Theorem \ref{inStein}. Then construct new smooth structures by end-summing each of these Casson smoothings on $X$ with $\natural_\infty {L}$. Each resulting smooth structure clearly realizes infinite Taylor invariant. As in the proof of Theorem \ref{TG} and Addendum \ref{ad}, these end-sums can be performed so that any two can still be distinguished by the genus-rank functions they produce. (These genus-rank functions cannot be distinguished by their first characteristic genera alone in the uncountable case, but the argument still goes through identically.) 
\end{proof}

\begin{example}
As in Example \ref{endsummingex}, let $X$ be the connected sum of infinitely many copies of $S^2\times S^2$ and let $\Sigma_{\text{std}}$ be its standard smooth structure. Again, perform these connected sums so that $X$ has a unique end. Then Theorem \ref{inf} defines smooth structures on $X$ that produce uncountably many distinct genus-rank functions but all realize infinite Taylor invariant. At least on the standard basis for $H_2(X)$, we can follow Example \ref{endsummingex} to arrange for the genus function from each resulting smooth structure to agree with the genus function from a Stein-Casson smoothing on $X$. However, it seems to be unknown if any Stein-Casson smoothing on $X$ also realizes infinite Taylor invariant. For example, consider $L_\infty$ from Corollary \ref{notU}. It is immediate from the construction of $L_\infty$ that it smoothly embed into $X_{\Sigma_\text{std}}$, but it is not clear if $L_\infty$ smoothly embeds into any Stein-Casson smoothing on $X$.  For comparison, note that Example \ref{endsummingex} ensures $U$ smoothly embeds into $X_{\Sigma_\text{std}}$ but does not smoothly embed into any Stein-Casson smoothing on $X$. 
\end{example}

Even if $w_2(X)\in H^2(X, \mathbb{Z}_2)$ is not compactly supported, we can still control the genus-rank function while realizing uncountably many distinct compact equivalence classes. However, the Taylor invariant will always equal $-\infty$ in this situation.
\begin{theorem}
\label{CG}
Suppose that $X$ is a open, connected, topological $4$-manifold that is the interior of an oriented handlebody $H$ with all indices $\leq 2$, $0<\beta_2(X)<\infty$, and the standard smooth structure  that $X$ inherits as a handlebody interior is compactly positive definite. Then there are smooth structures on $X$ that all produce the same genus-rank function but represent compact equivalence classes with the order type of $\mathbb{R}^{\geq 0}$, and infinitely many genus-rank functions occur in this way. 
\end{theorem}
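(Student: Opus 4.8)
The plan is to run the argument behind Theorem~\ref{TG} and Addendum~\ref{ad} with the Taylor-invariant bookkeeping removed: when $w_2(X)$ is not compactly supported every smooth structure on $X$ has Taylor invariant $-\infty$, and when it is compactly supported the assertion is already contained in Addendum~\ref{ad}, so in either case we may simply end-sum with a single copy of the $\mathbb{R}^4$ from Theorem~\ref{inStein}, dropping the iteration parameter $n$ that was only present to push $\gamma$ upward. Concretely: equip $X$ with the orientation it carries as a handlebody interior, and use $\beta_2(X)\neq 0$ together with the procedure in the proof of Theorem~3.4 of~\cite{G5} to build Casson smoothings $\{\Sigma_a\mid a\in\mathbb{Z}^{>0}\}$ on $X$ for which $X_{\Sigma_a}$ inherits, from smooth orientation-preserving embeddings into Stein surfaces that restrict to the identity away from the Casson handles, enough adjunction inequalities to force the first characteristic genus $g_a$ of its genus-rank function to be strictly increasing in $a$. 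Fix an $L$ from Theorem~\ref{inStein}, a homeomorphism $h\colon\mathbb{R}^4\to L$ made smooth near the positive real axis via Quinn's Stable Homeomorphism Theorem, and $t_0>0$ large enough that the compact set $K_1$ from property~(\ref{end}) of Theorem~\ref{inStein} lies inside $L_{t_0}$, where for $t\in\mathbb{R}^{\geq0}$ we write $L_t$ for the interior of the image under $h$ of the standard ball of radius $t_0+t$; as in the proof of Theorem~\ref{TG}, each $L_t$ is an oriented $\mathbb{R}^4$ still satisfying properties~(1)--(4) of Theorem~\ref{inStein}, so Lemma~\ref{almoststein} and property~(\ref{end}) remain available for it. Finally fix a smooth properly embedded ray $\gamma$ inside a $0$-handle of $H$, the corresponding ray $\gamma_a$ in each $X_{\Sigma_a}$, and for $a\in\mathbb{Z}^{>0}$, $t\in\mathbb{R}^{\geq0}$ let $\Sigma_{a,t}$ be the smooth structure obtained by end-summing $X_{\Sigma_a}$ with $L_t$ along $\gamma_a$ and an arbitrary ray in $L_t$.

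The genus-rank analysis is then verbatim that of Theorem~\ref{TG}. Since the embeddings of $X_{\Sigma_a}$ into Stein surfaces fix $\gamma_a$, Lemma~\ref{almoststein} shows $X_{\Sigma_{a,t}}$ inherits the same adjunction inequalities as $X_{\Sigma_a}$, so its genus-rank function vanishes at $g_{a'}$ for $a'<a$; being pointwise at least the genus-rank function of $X_{\Sigma_a}$, it is nonzero at $g_a$. Hence $\Sigma_{a,t}$ and $\Sigma_{a',t'}$ have distinct genus-rank functions whenever $a\neq a'$. Because $\beta_2(X)<\infty$ and every homology class of a smooth $4$-manifold has a smoothly embedded representative, every genus-rank function equals $\beta_2(X)$ past a finite stage depending only on $a$, so the same pointwise lower bound forces $\{\Sigma_{a,t}\mid t\in\mathbb{R}^{\geq0}\}$ to realize only finitely many genus-rank functions for each fixed $a$; and since $L_{t'}\subseteq L_t$ for $t'\leq t$ yields a smooth orientation-preserving embedding $X_{\Sigma_{a,t'}}\hookrightarrow X_{\Sigma_{a,t}}$, those finitely many genus-rank functions form a chain under $\leq$, each realized on an interval of $t$-values whose largest member is unbounded above.

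For the compact equivalence classes I would copy the relevant step of the proof of Theorem~\ref{TG} and Addendum~\ref{ad}, which itself follows the proof of Lemma~7.3 in~\cite{G5}: for fixed $a$, $\Sigma_{a,t'}\leq\Sigma_{a,t}$ if and only if $t'\leq t$. One direction is the inclusion noted above. For the converse, if $\Sigma_{a,t'}\leq\Sigma_{a,t}$ with $t'>t$ then $L_t$ admits a smooth orientation-preserving embedding with compact closure into $X_{\Sigma_{a,t}}$, hence into the boundary sum of compact pieces $K'\subseteq X_{\Sigma_a}$ and $K''\subseteq L_t$; since Casson handles embed into the standard $2$-handle, $X_{\Sigma_a}$ embeds smoothly and orientation-preservingly into $X_{\Sigma_{\text{std}}}$, and the compact-positive-definiteness hypothesis then embeds $K'$ into a smooth, closed, simply connected, positive definite $4$-manifold $P$; thus $L_t$ embeds into a smooth, compact, codimension-$0$ submanifold of $P\#L_t$, and reversing orientation and invoking Taubes's extension of Donaldson theory (as in~\cite{D}) together with property~(\ref{end}) of Theorem~\ref{inStein} yields a contradiction. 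Therefore $t\mapsto[\Sigma_{a,t}]$ is an order embedding of $\mathbb{R}^{\geq0}$ into compact equivalence classes.

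To finish, for each $a$ let $\mathcal{C}_a=\{\Sigma_{a,t}\mid t\in I_a\}$ where $I_a$ is the (unbounded) top $t$-interval on which the genus-rank function is constant, shrunk slightly if necessary so that it has the order type of $\mathbb{R}^{\geq0}$. All members of $\mathcal{C}_a$ produce the same genus-rank function, their compact equivalence classes have the order type of $\mathbb{R}^{\geq0}$ by the previous paragraph, and distinct values of $a$ yield distinct genus-rank functions; as $a$ ranges over $\mathbb{Z}^{>0}$ this gives infinitely many such genus-rank functions, proving the theorem. Note that the hypothesis that $w_2(X)$ be compactly supported, needed in Theorem~\ref{TG}, is used nowhere here. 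I expect the only real work to be exactly as in Theorem~\ref{TG}: verifying that Lemma~\ref{almoststein} truly applies to every $L_t$—so that end-summing leaves the adjunction inequalities, and hence the genus-rank function on each $\mathcal{C}_a$, untouched—and that the Donaldson--Taubes argument above genuinely forces $\leq$ to be strict; the concluding count is then routine.
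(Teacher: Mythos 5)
Your proposal is correct and is essentially the paper's own proof: the paper likewise constructs the structures $\Sigma_{a,n,t}$ (for a fixed $n$; your choice amounts to $n=1$) as in Theorem \ref{TG} and Addendum \ref{ad}, observes that the genus-rank bounds and the compact-equivalence ordering in $t$ survive without the $w_2$/Taylor-invariant hypotheses, and then selects subcollections constant in genus-rank. Your explicit monotonicity-in-$t$ argument for extracting a final segment $[T,\infty)$ with constant genus-rank is just a careful filling-in of the paper's terse "the bounds still hold, so choose subcollections" step.
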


\begin{proof}
Using some fixed $n\in\mathbb{Z}^{> 0}$, construct smooth structures $\{\Sigma_{a, n, t}\mid a\in\mathbb{Z}^{>0}, t\in\mathbb{R}^{\geq 0}\} $ on $X$ as in the proof of Theorem \ref{TG} and Addendum \ref{ad}. The bounds produced in that proof on the resulting genus-rank functions still hold. Similarly, the claims made about compact equivalence classes are also still true. Therefore, we can choose subcollections $\mathcal{C}_{a}\subseteq \{\Sigma_{a, n, t}\mid t\in\mathbb{R}^{\geq 0}\}$ for each $a\in\mathbb{Z}^{> 0}$ so that each contains smooth structures that all produce the same genus-rank function but represent compact equivalence classes with the order type of $\mathbb{R}^{\geq 0}$. Infinitely many  genus-rank functions occur in this way. 
\end{proof}

\begin{example}
Let $Y=\#m\mathbb{C}P^2$ for some $m\in\mathbb{Z}^{>0}$. Construct an open $4$-manifold $X$ by puncturing $Y$ and then end-summing with infinitely many orientable $\mathbb{R}^2$-bundles over nonorientable surfaces. Choose these $\mathbb{R}^2$-bundles so that each has positive, odd Euler number. Now $w_2(X)\in H^2(X, \mathbb{Z}_2)$ is not compactly supported, so we lose all control over the Taylor invariant. However, notice that each of these $\mathbb{R}^2$-bundles (equipped with their standard smooth structures) smoothly embeds into a finite connected sum of $\mathbb{C}P^2$'s. So the standard smooth structure on $X$ is compactly positive definite. Since $\beta_2(X)=m<\infty$ and this standard smooth structure is inherited from a handlebody with all indices $\leq 2$, Theorem \ref{CG} defines new smooth structures on $X$  that all produce the same genus-rank function but realize uncountably many compact equivalence classes. This can be done for infinitely many distinct genus-rank function. 
\end{example}

By forfeiting control over the genus-rank function, we can more carefully consider the relationship between compact equivalence classes and the Taylor invariant. 
\begin{theorem}
\label{CT}
Suppose that $X$ is an open, connected, topological $4$-manifold that is the interior of an oriented handlebody $H$ and the standard smooth structure that $X$ inherits as a handlebody interior is compactly positive definite. Then there are smooth structures on $X$ that represent compact equivalence classes with the order type of $\mathbb{R}^{\geq 0}$.  If $H$ has finitely many $3$-handles, $\beta_2(X)<\infty$, and $w_2(X)\in H^2(X, \mathbb{Z}_2)$ is compactly supported, then these can be chosen to all realize the same arbitrarily large (finite) value of the Taylor invariant.
\end{theorem}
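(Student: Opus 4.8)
The plan is to transcribe the construction from the proofs of Theorem \ref{TG} and Addendum \ref{ad}, but to end-sum directly with the standard smoothing of $X$ rather than with Casson smoothings, since we are no longer trying to control the genus-rank function. First I would fix an $\mathbb{R}^4$ $L$ from Theorem \ref{inStein} and, exactly as in the proof of Theorem \ref{TG}, a homeomorphism $h\colon\mathbb{R}^4\to L$ smooth near the positive real axis together with a real number $t_0>0$ chosen large enough that each $\mathbb{R}^4$ $L_t$ ($t\in\mathbb{R}^{\geq0}$), defined as the interior of the image under $h$ of the ball of radius $t_0+t$, still satisfies properties (\ref{inv})-(\ref{neg}) of Theorem \ref{inStein}. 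Then I would give $X$ its handlebody orientation, write $\Sigma_{\text{std}}$ for the standard smoothing, fix a smooth properly embedded ray $\gamma$ in $X_{\Sigma_{\text{std}}}$ inside a $0$-handle of $H$, fix a positive integer $n$, and for each $t$ let $\Sigma_{n,t}$ be the smooth structure on $X$ obtained by end-summing $X_{\Sigma_{\text{std}}}$ with $\natural_n L_t$ along $\gamma$; these are genuinely smoothings of $X$ because end-summing with an $\mathbb{R}^4$ preserves the homeomorphism type.

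To establish the first assertion, I would show $\Sigma_{n,t'}\leq\Sigma_{n,t}$ if and only if $t'\leq t$, which is precisely the compact-equivalence claim inside the proof of Theorem \ref{TG} with $X_{\Sigma_a}$ replaced by $X_{\Sigma_{\text{std}}}$. The forward implication is immediate since $\natural_n L_{t'}$ is a shaved $\mathbb{R}^4$ in $\natural_n L_t$. For the converse, a putative inequality $\Sigma_{n,t'}\leq\Sigma_{n,t}$ with $t'>t$ would give a smooth orientation-preserving embedding of $\natural_n L_{t'}$ with compact closure into $X_{\Sigma_{n,t}}$, hence into a boundary sum $K_1'\,\natural\,K_2'$ of compact codimension-$0$ submanifolds $K_1'\subseteq X_{\Sigma_{\text{std}}}$ and $K_2'\subseteq\natural_n L_t$; compact positive definiteness of $\Sigma_{\text{std}}$ then embeds $K_1'$ orientation-preservingly into a smooth closed simply connected positive definite $4$-manifold $P$, so $\natural_n L_{t'}$ embeds into a compact codimension-$0$ submanifold of $P\#(\natural_n L_t)$, and after reversing orientation the standard argument (see the proof of Theorem \ref{TG}) combining Taubes's extension of Donaldson theory with property (\ref{end}) of Theorem \ref{inStein} produces a contradiction. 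Since $t\mapsto[\Sigma_{n,t}]$ therefore both preserves and reflects order, it is an order isomorphism of $\mathbb{R}^{\geq0}$ onto its image in the set of compact equivalence classes of smoothings of $X$, which gives the first assertion; note that this part uses only the compact-positive-definiteness hypothesis and works for any fixed $n$, say $n=1$.

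For the second assertion I would add the hypotheses that $H$ has finitely many $3$-handles, $\beta_2(X)<\infty$, and $w_2(X)$ is compactly supported, and fix a smoothly embedded closed surface $F\subseteq X_{\Sigma_{\text{std}}}$ dual to $w_2(X)$ and disjoint from $\gamma$ (take $F=\varnothing$ if $X$ is spin). As in the proof of Theorem \ref{TG}, the proof of Theorem 6.4 of \cite{Tay} applied to $X_{\Sigma_{\text{std}}}$ produces a finite upper bound $\gamma(X_{\Sigma_n'})<\infty$, where $\Sigma_n'$ end-sums $X_{\Sigma_{\text{std}}}$ with $\natural_n L$; since $\natural_n L_t$ is a shaved $\mathbb{R}^4$ in $\natural_n L$ there is a smooth orientation-preserving embedding $X_{\Sigma_{n,t}}\hookrightarrow X_{\Sigma_n'}$ carrying duals of $w_2(X)$ to duals of $w_2(X)$, so $\gamma(X_{\Sigma_{n,t}})\leq\gamma(X_{\Sigma_n'})<\infty$ for every $t$, and the same mechanism applied to the inclusion-type embedding $X_{\Sigma_{n,t'}}\hookrightarrow X_{\Sigma_{n,t}}$ for $t'\leq t$ shows $t\mapsto\gamma(X_{\Sigma_{n,t}})$ is nondecreasing. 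Since $X_{\Sigma_{n,t}}$ contains $\natural_n L_0$ disjointly from $F$, one also has $\gamma(X_{\Sigma_{n,t}})\geq\gamma(X_{\Sigma_{n,t}}-F)-\dim_{\mathbb{Z}_2}H_1(F,\mathbb{Z}_2)\geq\gamma(\natural_n L_0)-d$ with $d=\dim_{\mathbb{Z}_2}H_1(F,\mathbb{Z}_2)$, and by property (\ref{inv}) for $L_0$ the number $\gamma(\natural_n L_0)-d$ grows without bound in $n$. Thus $t\mapsto\gamma(X_{\Sigma_{n,t}})$ is a bounded nondecreasing integer-valued step function, hence constant on some nondegenerate interval $J\subseteq\mathbb{R}^{\geq0}$; given a target $N$, choosing $n$ with $\gamma(\natural_n L_0)-d>N$ forces every $\gamma(X_{\Sigma_{n,t}})>N$, and restricting the family to a half-open subinterval of $J$ (which carries the order type of $\mathbb{R}^{\geq0}$, with $t\mapsto[\Sigma_{n,t}]$ still an order embedding) yields smoothings of $X$ all realizing the same finite Taylor invariant, bigger than $N$, whose compact equivalence classes have the order type of $\mathbb{R}^{\geq0}$. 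The only point needing genuine care relative to the proof of Theorem \ref{TG} is that the finiteness $\gamma(X_{\Sigma_n'})<\infty$ from the proof of Theorem 6.4 of \cite{Tay} must survive relaxing ``no $3$-handles'' to ``finitely many $3$-handles''; I expect this to be routine, and I regard it as the main (minor) obstacle.
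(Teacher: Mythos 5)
Your proposal is correct and follows the paper's proof essentially verbatim: the paper likewise runs the construction from Theorem \ref{TG} and Addendum \ref{ad} with $\Sigma_a$ replaced by the standard smoothing, reuses the same compact-equivalence argument (compact positive definiteness, Taubes, and property (\ref{end}) of Theorem \ref{inStein}) and the same Taylor-invariant bounds via Theorem 6.4 of \cite{Tay}, and then extracts subfamilies of constant Taylor invariant with order type $\mathbb{R}^{\geq 0}$. One small imprecision: from $\Sigma_{n,t'}\leq\Sigma_{n,t}$ with $t'>t$ the definition of $\leq$ directly yields a compact-closure embedding of $\natural_n L_t$ (or $\natural_n L_s$ for any $s<t'$), not of $\natural_n L_{t'}$ itself, since $\natural_n L_{t'}$ is not the interior of a flat ball in $X_{\Sigma_{n,t'}}$; this is exactly how the paper states that step, and it changes nothing downstream.
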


\begin{proof}
 For some fixed $a\in\mathbb{Z}^{>0}$, define a collection $\{\Sigma_{a, n, t}\mid n \in\mathbb{Z}^{> 0}, t\in\mathbb{R}^{\geq 0}\}$ of smooth structures on $X$ using the procedure in the proof of Theorem \ref{TG} and Addendum \ref{ad} but with the modification that $\Sigma_a$ is simply this standard smooth structure on $X$. The claims made in that proof about the resulting compact equivalence classes still hold. Thus, there are subcollections $\mathcal{C}_n\subseteq \{\Sigma_{a, n, t}\mid t\in\mathbb{R}^{\geq 0}\}$ for each $n \in\mathbb{Z}^{> 0}$ that realize compact equivalence classes with the order type of $\mathbb{R}^{\geq 0}$. Suppose next that $H$ has finitely many $3$-handles, $\beta_2(X)<\infty$, and $w_2(X)\in H^2(X, \mathbb{Z}_2)$ is compactly supported. Since Theorem 6.4 from \cite{Tay} still applies, the bounds on the Taylor invariant that are used in the proof of Theorem \ref{TG} and Addendum \ref{ad} also still hold. Therefore, these subcollections can be chosen so that the smooth structures in each $\mathcal{C}_n$ all realize the same value of the Taylor invariant. As needed, infinitely many (arbitrarily large and finite) values of the Taylor invariant occur in this way. 
\end{proof}

\begin{remark}
The requirement that $H$ has finitely many $3$-handles could be replaced by the \textit{few essential 3-handles} condition from \cite{Tay}. 
\end{remark}

\begin{example}
 Let $X$ be the interior of the handlebody obtained by boundary summing together $\{H_i\}_{i=0}^\infty$ with each $H_i$ constructed by attaching a $1$-framed $2$-handle to $B^4\cup (1\text{-handle})$ along the simplest knot that links the corresponding dotted circle $2i+1$ times. Observe that $X$ is compactly positive definite because $\mathbb{C}P^2$ can be obtained from any $H_i$ by attaching a $2$- and $4$-handle. Since $H_2(X, \mathbb{Z}_2)=H_2(X)=0$, it is clear that $X$ satisfies all hypotheses from Theorem \ref{CT}. Therefore, there are smooth structures on $X$ representing uncountably many compact equivalence classes that all realize the same arbitrarily large (finite) value of the Taylor invariant. Notice that the genus-rank function provides no information in this case. 
\end{example}

We can sometimes improve the applications presented in this section by realizing each combination of invariants for uncountably many distinct diffeomorphism types. This is achieved using the method introduced in Section 7 of \cite{G5}, which produces uncountable families of smooth structures by starting with a compactly negative definite smooth structure on an open, topological $4$-manifold and then end-summing with small $\mathbb{R}^4$'s. Our argument relies on the fact that the $\mathbb{R}^4$'s used in the proof of each theorem above embed into smooth, closed, simply connected, negative definite $4$-manifolds. 
\begin{theorem}
\label{small} Suppose that $X$ is an open, connected, topological $4$-manifold that satisfies the hypotheses of Theorem \ref{TG}, Addendum \ref{ad},  Theorem \ref{inf}, Theorem \ref{CG}, or Theorem \ref{CT} as the interior of a handlebody $H$. If also the standard smooth structure that $X$ inherits as the interior of $H$ is compactly negative definite, then each combination of genus-rank function, Taylor invariant, and compact equivalence class realized by this theorem or addendum can be chosen so that it occurs for uncountably many diffeomorphism classes of smooth structures on $X$. 
\end{theorem}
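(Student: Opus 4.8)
The plan is to run the constructions from the cited theorems essentially verbatim, but to replace each occurrence of a single $\mathbb{R}^4$ of the form $\natural_n L_t$ by an end-sum $\natural_n L_t \,\natural\, R$ with $R$ ranging over an appropriate uncountable family of small $\mathbb{R}^4$'s, and then to check that this extra factor neither disturbs the invariants already being controlled nor collapses the diffeomorphism types. First I would recall the source of the uncountable family: Section 7 of \cite{G5} produces, from a compactly negative definite smooth structure, an uncountable collection $\{R_s \mid s \in \mathbb{R}^{\geq 0}\}$ of small $\mathbb{R}^4$'s such that end-summing with them yields uncountably many diffeomorphism types; since each $R_s$ is small, it has vanishing Taylor invariant and embeds (as a shaved $\mathbb{R}^4$) in $S^4 - \{\mathrm{pt}\}$, hence into any smooth $4$-manifold. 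The key structural input I would use is that the $\mathbb{R}^4$'s $L_t$ (equivalently $\natural_n L_t$) appearing in all the proofs above are themselves compactly negative definite: by property (\ref{neg}) of Theorem \ref{inStein}, each $\natural_n L$ embeds smoothly and orientation-preservingly into $\#n\,\overline{\mathbb{C}P^2}$, and the truncations $L_t$ inherit this, so every compact codimension-$0$ submanifold of $\natural_n L_t$ sits inside a closed, simply connected, negative definite $4$-manifold. Combined with the hypothesis that $\Sigma_{\mathrm{std}}$ on $X$ is compactly negative definite, the end-summed smooth structures $\Sigma_{a,n,t}$ constructed in the proofs above are all compactly negative definite (a compact submanifold of an iterated end-sum lies in a boundary sum of compact pieces from the summands, hence embeds in a connected sum of negative definite closed $4$-manifolds).

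With that in hand, the steps are: (1) Fix one of the theorems/addenda being invoked and reproduce its family $\{\Sigma_{a,n,t}\}$, together with the subcollections $\mathcal{C}_{a,n}$ (or $\mathcal{C}_a$, $\mathcal{C}_n$) that pin down the desired triple of genus-rank function, Taylor invariant, and compact equivalence class. (2) For each such $\Sigma_{a,n,t}$ in a chosen subcollection, and each $s\in\mathbb{R}^{\geq 0}$, define $\Sigma_{a,n,t,s}$ by further end-summing $X_{\Sigma_{a,n,t}}$ with the small $\mathbb{R}^4$ $R_s$ along a properly embedded ray disjoint from the ray already used. (3) Check invariance of the genus-rank function: since $R_s$ is small, end-summing with it produces a smooth $4$-manifold admitting a smooth embedding back into (a neighborhood of) $X_{\Sigma_{a,n,t}}$ after removing a compact set, so no new homologically essential surface of smaller genus appears — more directly, one can absorb $R_s$ into the Stein side exactly as Lemma \ref{almoststein} absorbs $L$, because small $\mathbb{R}^4$'s also satisfy the shaved-embedding-into-a-Stein-surface hypothesis (many small $\mathbb{R}^4$'s, e.g. those of \cite{G4}, are Stein), or else one appeals to property (\ref{end})-type arguments; either way the adjunction inequalities, hence the upper bounds on the genus-rank function, are preserved, and the lower bounds are preserved automatically. (4) Check invariance of the Taylor invariant: $\gamma$ is monotone under end-summing with an $\mathbb{R}^4$ of vanishing Taylor invariant in the spin (or compactly-supported-$w_2$) setting, and the same embedding-back argument gives the matching upper bound, so $\gamma(X_{\Sigma_{a,n,t,s}}) = \gamma(X_{\Sigma_{a,n,t}})$. (5) Check the compact equivalence class: one shows $\Sigma_{a,n,t,s}$ and $\Sigma_{a,n,t}$ are compactly equivalent because $R_s$ embeds in $S^4-\{\mathrm{pt}\}$, so it contributes no new compact pieces beyond the standard $\mathbb{R}^4$ up to orientation-preserving diffeomorphism — here I would invoke the standard fact (as in \cite{G1}) that end-summing with a small $\mathbb{R}^4$ does not change the compact equivalence class. (6) Finally, distinguish diffeomorphism types: fixing the triple $(a,n,t)$ and varying $s$, the family $\{\Sigma_{a,n,t,s} \mid s\in\mathbb{R}^{\geq 0}\}$ realizes uncountably many diffeomorphism types by the Section 7 argument of \cite{G5}, which applies because $X_{\Sigma_{a,n,t}}$ is compactly negative definite; this is where Taubes's gauge theory (via \cite{Tau}) enters, through the order-type-$\mathbb{R}^{\geq 0}$ structure of the resulting smoothings.

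The main obstacle I expect is step (5) combined with the bookkeeping in step (6): one must simultaneously keep the compact equivalence class \emph{fixed} along one parameter (the $s$ from the small $\mathbb{R}^4$'s, used to multiply diffeomorphism types) while the construction of Theorem \ref{TG}/Addendum \ref{ad} is already using a \emph{different} parameter (the $t$ from truncating $L$) to \emph{vary} the compact equivalence class over an $\mathbb{R}^{\geq 0}$'s worth of values. Reconciling these requires checking that the small-$\mathbb{R}^4$ end-sum is invisible to the compact-equivalence ordering induced by the $L_t$-truncations — i.e. that $\Sigma_{a,n,t,s}$ and $\Sigma_{a,n,t',s'}$ are compactly equivalent iff $t=t'$, independently of $s,s'$ — and that, for each fixed compact equivalence class (i.e. each fixed $t$), the $s$-parameter still yields uncountably many diffeomorphism types. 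The cleanest way to handle this, which I would adopt, is to run the Section 7 construction of \cite{G5} \emph{relative to} the already-built family: since that construction's invariant distinguishing diffeomorphism types is insensitive to compact equivalence class and to the Taylor invariant (it detects the small $\mathbb{R}^4$ summand directly), the two parameter families are genuinely independent, and the verification reduces to the routine additivity statements for $\gamma$, for genus-rank bounds, and for compact equivalence under end-sum, each of which follows from results already in the excerpt or in \cite{G1}, \cite{Tay}, and \cite{G5}.
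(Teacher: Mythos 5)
Your overall strategy coincides with the paper's: check that the smoothings $\Sigma_{a,n,t}$ built in Theorem \ref{TG}, Addendum \ref{ad}, and Theorems \ref{inf}--\ref{CT} are compactly negative definite (via property (\ref{neg}) of Theorem \ref{inStein}, the embedding of $X_{\Sigma_a}$ into the standard smoothing, and the new hypothesis on that standard smoothing), and then apply the Section 7 machinery of \cite{G5}, i.e.\ end-summing with an uncountable family of small $\mathbb{R}^4$'s. Where you diverge is in how the three invariants are shown to be unchanged: the paper does not re-verify them one at a time, but cites Theorem 7.1 of \cite{G5} and uses the stronger output of its proof, namely that each new smoothing and $X_\Sigma$ admit smooth embeddings into one another, both topologically isotopic to the identity. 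These two-way embeddings force equality of the genus functions (hence of the genus-rank functions), of the Taylor invariants, and of the compact equivalence classes all at once.

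Your piecemeal verification has genuine soft spots. For the genus-rank function you only argue that adjunction-type upper bounds and the obvious lower bounds persist; that does not show the genus-rank function itself is unchanged, and ``only finitely many possibilities within the bounds'' is unavailable in the setting of Theorem \ref{inf} with $\beta_2(X)=\infty$, where uncountably many genus-rank functions are in play. Moreover your absorption argument leans on the unsubstantiated claim that the particular small $\mathbb{R}^4$'s $R_s$ used in \cite{G5} embed suitably into Stein surfaces; nothing in that construction provides this, and it is not needed. Likewise, step (5) asserts as a ``standard fact'' that end-summing with a small $\mathbb{R}^4$ preserves the compact equivalence class in the sense defined here, which should be proved rather than invoked. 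All of these issues, including your worry about reconciling the $s$-parameter with the $t$-parameter that indexes the compact equivalence classes, dissolve once you replace steps (3)--(6) by the single observation that the proof of Theorem 7.1 of \cite{G5} supplies the mutual embeddings above: compact equivalence with $\Sigma_{a,n,t}$ is then automatic, so the $t$-indexed ordering is untouched and only the diffeomorphism types within each fixed triple are multiplied.
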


\begin{proof}
  Let $\Sigma$ be any smooth structure on $X$ that is constructed by Theorem \ref{TG}, Addendum \ref{ad}, Theorem \ref{inf}, Theorem \ref{CG}, or Theorem \ref{CT} applied to $X$ as the interior of $H$. Recall that $\Sigma$ is produced by end-summing some smoothing $\Sigma_a$ on $X$ with some $\natural_n{L}_t$ for $n\in\mathbb{Z}^{> 0}\cup\{\infty\}$, where $L_t$ is an $\mathbb{R}^4$ contained in some $L$ produced by Theorem \ref{inStein}.  The smooth structure $\Sigma_a$ is either a Casson smoothing corresponding to $H$ or the standard smoothing that $X$ inherits as a handlebody interior. In either case, $X_{\Sigma_a}$ admits a smooth, orientation-preserving embedding into this standard smooth structure on $X$. It follows from property (\ref{neg}) of Theorem \ref{inStein} that $\natural_n{L}_t$ admits a smooth, orientation-preserving embedding into $\#n\overline{\mathbb{C}P^2}$. Using our new hypothesis, we can now easily conclude that $\Sigma$ is compactly negative definite. Hence, $\Sigma$ satisfies the hypothesis of Theorem 7.1 from \cite{G5}. So we obtain uncountably many diffeomorphism classes of smooth structures on $X$ by applying that theorem to $\Sigma$. Its proof ensures that every smooth $4$-manifold obtained by equipping $X$ with one of these new smooth structures admits a smooth embedding into $X_\Sigma$ and visa versa. Both embeddings are topologically isotopic to the identity, so it is clear that each of these new smooth structures realizes that same genus-rank function, Taylor invariant, and compact equivalence class as $\Sigma$. 
\end{proof}

\begin{example}
This example provides a construction of compact, spin $4$-dimensional handlebodies whose interiors are both compactly positive definite and compactly negative definite but do not smoothly embed into $S^4$, each of which has all indices $\leq 2$ and nonzero second betti number. Suppose that $K$ is an amphichiral knot in $S^3=\partial B^4$ with unknotting number and $4$-ball genus both equal to $1$, such as the figure-8 knot or the $6_3$ knot. Then let $H_K$ be the handlebody obtained by attaching a $0$-framed $2$-handle to $B^4$ along $K$. Notice immediately that $H_K$ does not smoothly embed into $S^4$ because $K$ is not smoothly slice. Our requirement on the unknotting number ensures that $K$ bounds a smoothly immersed disk with a single double point, and we can arrange for either sign on this double point. First consider the case when the double point is positive. Resolve this singularity using $\mathbb{C}P^2$, so that $K$ bounds a smoothly embedded disk $D$ in $B^4\#\mathbb{C}P^2$ whose normal framing induces the $0$-framing on $K$. Capping off with a $4$-handle produces $\mathbb{C}P^2$, and we can see $H_K$ smoothly embedded in $\mathbb{C}P^2$ as the union of this $4$-handle and a tubular neighborhood of $D$. Starting with a negative double point and undergoing the same procedure instead with $\overline{\mathbb{C}P^2}$ produces a smooth embedding of $H_K$ into $\overline{\mathbb{C}P^2}$. Each of these embeddings is actually orientation-reversing, but $H_K$ still admits a smooth, orientation-preserving embedding into both $\mathbb{C}P^2$ and $\overline{\mathbb{C}P^2}$ (either by reversing the orientation on both target manifolds or by observing that $H_K$ admits an orientation-reversing self-diffeomorphism). So the oriented, open, topological $4$-manifold $X_K$ obtained as the interior of the handlebody $H_K$ inherits a smooth structure that is both compactly positive definite and compactly negative definite. Since $\beta_2(X_K)=1$ and $w_2(X_K)=0\in H^2(X_K, \mathbb{Z}_2)$, it follows that $X_K$ satisfies all hypotheses of both Theorem \ref{TG}\linebreak  and Addendum \ref{ad} (and also Theorem \ref{inf}, Theorem \ref{CG}, and Theorem \ref{CT}) as the interior of $H_K$. Therefore, we can construct smooth structures on $X_K$ with independent control over the resulting genus-rank functions, Taylor invariants, and compact equivalence classes. Furthermore, Theorem \ref{small} finds uncountably many distinct diffeomorphism classes of smooth structures on $X_K$ realizing any triple that occurs in this way. This procedure can clearly be generalized to more complicated knots and links with similar unknotting properties.  
\end{example}

We conclude this section with what seems to be the ``largest" family of exotic $\mathbb{R}^4$'s, but note that this corollary could instead be obtained by combining previously known results about $\mathbb{R}^4$'s in the appropriate way. As in Corollary \ref{notStein}, none of these $\mathbb{R}^4$'s admit a Stein structure themselves. However, for a fixed value of the Taylor invariant, it is clear that all of the representatives produced by this corollary do smoothly embed into some fixed Stein surface. Furthermore, this Stein surface can be chosen to be compact when the Taylor invariant is finite.  
\begin{cor}
\label{R4's}
There are uncountably many compact equivalence classes on $\mathbb{R}^4$ that each contain uncountably many diffeomorphism types, and these compact equivalence classes all realize the same arbitrarily large (finite) value of the Taylor invariant. Additionally, there are uncountably many compact equivalence classes on $\mathbb{R}^4$ with infinite Taylor invariant and at least one contains uncountably many diffeomorphism types.  
\end{cor}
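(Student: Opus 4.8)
The plan is to deduce both assertions by specializing Theorems~\ref{CT} and~\ref{small}, together with the construction in the proof of Theorem~\ref{TG}, to $X=\mathbb{R}^4$ itself, regarded as the interior of the handlebody $B^4$. This handlebody consists of a single $0$-handle, so it has all indices $\leq 2$, no $3$-handles, $\beta_2(\mathbb{R}^4)=0<\infty$, and $w_2(\mathbb{R}^4)=0$ is compactly supported. Moreover the standard smooth structure that $\mathbb{R}^4$ inherits as $\mathrm{int}(B^4)$ is both compactly positive definite and compactly negative definite, since every smooth, compact, codimension-$0$ submanifold of the standard $\mathbb{R}^4$ embeds smoothly into $S^4$, which is simply connected, closed, and has trivial (hence simultaneously positive and negative definite) intersection form.

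For the first statement, apply Theorem~\ref{CT} to $X=\mathbb{R}^4$. With the auxiliary parameter $n$ chosen appropriately, it produces a collection of smooth structures on $\mathbb{R}^4$ whose compact equivalence classes have the order type of $\mathbb{R}^{\geq 0}$ -- in particular uncountably many -- all realizing one common, arbitrarily large, finite value of the Taylor invariant. Since $\mathbb{R}^4$ satisfies the hypotheses of Theorem~\ref{CT} and its standard smooth structure is compactly negative definite, Theorem~\ref{small} applies and shows that each combination of (trivial) genus-rank function, Taylor invariant, and compact equivalence class produced above occurs for uncountably many diffeomorphism classes of smooth structures on $\mathbb{R}^4$. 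Together these give uncountably many compact equivalence classes on $\mathbb{R}^4$, each containing uncountably many diffeomorphism types, all with the same arbitrarily large finite Taylor invariant.

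For the second statement I would rerun the construction in the proofs of Theorems~\ref{TG} and~\ref{CT} with the finite parameter $n$ replaced by $\infty$. Fix an $L$ from Theorem~\ref{inStein} and the shaved, exhausting $\mathbb{R}^4$'s $\{L_t\subseteq L\}_{t\in\mathbb{R}^{\geq 0}}$ built there, each of which inherits properties (1)--(4) of Theorem~\ref{inStein}, and let $\Sigma_t$ be the smooth structure on $\mathbb{R}^4$ obtained by end-summing the standard $\mathbb{R}^4$ with $\natural_\infty L_t$. Property~(\ref{inv}) of Theorem~\ref{inStein}, applied to $L_t$, forces $\gamma(\Sigma_t)=\infty$, exactly as for $L_\infty$ in Corollary~\ref{notU}. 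The relation $\Sigma_{t'}\leq\Sigma_t$ holds whenever $t'\leq t$ because $L_{t'}\subseteq L_t$, while for $t'>t$ it fails by the argument used in the proof of Theorem~\ref{TG}: if $\Sigma_{t'}\leq\Sigma_t$, then a shaved copy of $\natural_k L_{t'}$ in $\Sigma_{t'}$ -- with $k$ finite and large enough that property~(\ref{end}) of Theorem~\ref{inStein} still applies to $L_{t'}$ through stage $k$ -- is confined to a compact, codimension-$0$ submanifold of $\Sigma_t$, which is compactly negative definite by property~(\ref{neg}) of Theorem~\ref{inStein} (a compact piece meets only finitely many summands); reversing orientation and gluing along the cut of property~(\ref{end}) then contradicts Taubes's extension of Donaldson theory once $k$ is large. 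Hence $t\mapsto[\Sigma_t]$ realizes uncountably many distinct compact equivalence classes on $\mathbb{R}^4$, all with infinite Taylor invariant.

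To finish, at least one of these classes contains uncountably many diffeomorphism types: apply Theorem~7.1 of~\cite{G5} to one of the (compactly negative definite) manifolds $\Sigma_t$ -- equivalently, invoke the final assertion of Corollary~\ref{notU} for $L_\infty=\natural_\infty L$, whose compact equivalence class likewise has infinite Taylor invariant. As in the proof of Theorem~\ref{small}, the proof of Theorem~7.1 of~\cite{G5} produces uncountably many diffeomorphism classes of smooth structures admitting smooth embeddings to and from $\Sigma_t$ (respectively $L_\infty$) that are topologically isotopic to the identity, so all of them share its compact equivalence class and its infinite Taylor invariant. I expect the only non-routine step to be adapting the ordering argument of Theorem~\ref{TG} to $n=\infty$: one must verify that the shaved copy of the finite piece $\natural_k L_{t'}$ is genuinely trapped in a compact -- hence compactly negative definite -- region of $\Sigma_t$, and that the obstruction coming from the even, positive definite manifold $Z_k$ of property~(\ref{end}) is sharp enough for large $k$; everything else is a direct appeal to Theorems~\ref{CT}, \ref{small}, \ref{inStein}, and Corollary~\ref{notU}.
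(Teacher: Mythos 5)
Your treatment of the first statement (apply Theorem \ref{CT} and Theorem \ref{small} to $X=\mathbb{R}^4=\operatorname{int}B^4$) is exactly the paper's argument, and your final step --- getting uncountably many diffeomorphism types inside a single class with infinite Taylor invariant by running Theorem 7.1 of \cite{G5} on a compactly negative definite representative such as $L_\infty$, with embeddings both ways topologically isotopic to the identity --- also matches the paper. The problem is the middle step, where you manufacture uncountably many compact equivalence classes with infinite Taylor invariant from $\Sigma_t=\natural_\infty L_t$ and claim the ordering argument of Theorem \ref{TG} adapts to $n=\infty$. It does not, and the obstruction is a definiteness-sign mismatch. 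In the proof of Theorem \ref{TG}/Addendum \ref{ad}, the contradiction comes from trapping $\natural_n L_t$ in a compact piece of $P\,\#\,(\natural_n L_t)$ with $P$ closed, simply connected and \emph{positive} definite (this is exactly where the compactly positive definite hypothesis enters), so that after reversing orientation everything in the periodic-end construction is negative definite and the even rank-$16sn$ summand coming from property (\ref{end}) violates Taubes. In your setting the compact piece of $\Sigma_t$ lies in a finite sub-sum $\natural_m L_t$, and the only closed manifolds you can push it into come from property (\ref{neg}): $\#m\overline{\mathbb{C}P^2}$, which is \emph{negative} definite --- the wrong sign relative to $Z_k$. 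Gluing $Y_k$ in along the cut of property (\ref{end}) then produces an \emph{indefinite} form (an even positive definite rank-$16sk$ piece alongside $\langle -1\rangle$ classes), to which neither Donaldson's nor Taubes's theorem applies; indeed, the same reasoning would ``prove'' that $\natural_k L$ cannot embed in the compact manifold $\#k\overline{\mathbb{C}P^2}$, contradicting property (\ref{neg}) itself. So your claimed non-relation $\Sigma_{t'}\not\leq\Sigma_t$ is unproved (and it is not clear it is even true). There is also a smaller bookkeeping slip: the object you transport under $\Sigma_{t'}\leq\Sigma_t$ must be the interior of a flat ball with compact closure in $\Sigma_{t'}$, so it should be $\natural_k L_s$ with $s<t'$, not $\natural_k L_{t'}$, whose closure in $\natural_\infty L_{t'}$ is noncompact.

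The paper sidesteps all of this by changing the base rather than the number of summands: take a handlebody whose interior carries the smooth structure $L_\infty$, reverse orientation so that the standard structure becomes $\overline{L_\infty}$, which \emph{is} compactly positive definite (compact pieces of $L_\infty$ embed in connected sums of $\overline{\mathbb{C}P^2}$'s), and apply the first part of Theorem \ref{CT} to it. The resulting structures are $\overline{L_\infty}\,\natural\,(\natural_n L_t)$ with $n$ finite and $t$ varying, so the ordering argument applies verbatim with the correct sign, while the $\overline{L_\infty}$ base alone forces infinite Taylor invariant (the invariant is orientation-independent and monotone under inclusion of open subsets). If you want to keep your family $\natural_\infty L_t$, you would need a genuinely new argument separating their compact equivalence classes; as written, that step is a gap.
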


\begin{proof}
The first claims follows immediately by applying Theorem \ref{CT} and Theorem \ref{small} to $X=\mathbb{R}^4$. Next, consider $L_\infty$ from Corollary \ref{notU}. Since $L_\infty$ is compactly negative definite, Theorem 7.1 of \cite{G5} finds uncountably many $\mathbb{R}^4$'s with infinite Taylor invariant in the same compact equivalence class as $L_\infty$. To realize uncountably many compact equivalence classes with infinite Taylor invariant, choose a handlebody (which will have infinitely many $3$-handles) from which $L_\infty$ inherits its smooth structure, reverse orientation, and then apply the first part of Theorem \ref{CT} to the result. Each resulting compact equivalence class has infinite Taylor invariant because each has a representative that contains $\overline{L_\infty}$. 
\end{proof}

\begin{remark}
 The theorems in this section could be stated in more generality. First of all, if a given homeomorphism type does not satisfy the hypotheses of some theorem above but has a sufficiently nice finite cover that does, then these methods can be adapted as in \cite{G5} to produce smooth structures whose diffeomorphism types are distinguishable by the invariants realized by their lifts to this cover. As above, we can independently control the values of these invariants. Secondly, following Corollary 7.4\linebreak of \cite{G5}, the compactly negative definite or compactly positive definite conditions only need to hold on a closed, noncompact subset $Y$ that has a smooth, compact $3$-manifold boundary. After throwing away duplicates, there are still uncountably many resulting smooth structures and they are distinguishable by the diffeomorphism types of their restrictions to $Y$. For example, we might construct uncountably many smooth structures that can be differentiated by the compact equivalence class of their restriction to $Y$. 
\end{remark}

\bibliographystyle{alphanum}

\vspace{1em}\small{
\noindent\textsc{Department of Mathematics, The University of Texas at Austin, 78712}
\vspace{.3em}

\noindent\textit{Email Address: }\href{mailto:jbennett@math.utexas.edu}{jbennett@math.utexas.edu}
\vspace{.3em}

\noindent\textit{Website: }\href{http://www.ma.utexas.edu/~jbennett/}{www.ma.utexas.edu/$\sim$jbennett/}}

\end{document}